      \newtheorem{theorem}{Theorem }[section]
      \newtheorem*{theorem1}{Theorem A}
            \newtheorem{lemma}{Lemma}[section]
      \newtheorem{conjecture}{Conjecture}
      \newtheorem*{propD}{Theorem D}
			\newtheorem*{coro1}{Corollary B}
			\newtheorem*{coro2}{Corollary C}
			\newtheorem{coro}{Corollary}[section]
      \newtheorem{prop}{Proposition}[section]
      \newtheorem{defi}[theorem]{Definition}
       \numberwithin{equation}{section}
      \newtheorem{remark}{Remark}[section]
\begin{document}

%



   \begin{abstract}
 We prove an equidistribution theorem \`a la Bader-Muchnik (\cite{BM}) for operator-valued measures associated with boundary representations in the context of discrete groups of isometries of CAT(-1) spaces thanks to an equidistribution theorem of T. Roblin (\cite{Ro}). This result can be viewed as a von Neumann's mean ergodic theorem for quasi-invariant measures. In particular, this approach gives a dynamical proof of the fact that boundary representations are irreducible. Moreover, we prove some equidistribution results for conformal densities using elementary techniques from harmonic analysis.
   \end{abstract}

   AMS subject classifications: Primary 37A25, 37A30; Secondary 43A65, 43A90.

  AMS keywords: Conformal densities, boundary representations, ergodic theorems, irreducibility, equidistribution.\\
Adrien Boyer, Technion, Haifa, Israel.\\
E-mail address: adrienboyer@technion.ac.il.
   \author{Adrien Boyer \thanks{The author is supported by ERC Grant 306706.  } }
  
     \title{Equidistribution, Ergodicity and Irreducibility in CAT(-1) spaces}




   \maketitle

\section{Introduction}

Any action of a locally compact group $G$ on a measure space $(X,\mu)$ where $\mu$ is a $G$-quasi-invariant measure gives rise to a unitary representation, after renormalization with the square root of the Radon-Nikodym derivative of the action of $G$ on $(X,\mu)$. This unitary representation is called a \emph{quasi-regular representation}, and generalizes the standard notion of quasi-regular representations given by $G  \curvearrowright G/H$ where $H$ is a closed subgroup of $G$, and $G/H$ always carries a $G$-quasi-invariant measure.

The dynamical properties of the action $G \curvearrowright ( X, \mu)$ can be reflected in a such representation.

 In the context of fundamental groups of compact negatively curved manifolds, U. Bader and R. Muchnik prove in \cite[Theorem 3]{BM} an equidistribution theorem for some operator-valued measures. This theorem can be thought as a generalization of von Neumann's mean ergodic theorem for quasi-invariant measures for fundamental groups acting on the Gromov boundary of universal covers of compact negatively curved manifolds endowed with the Patterson-Sullivan measures. These quasi-regular representations are called \textit{boundary representations}. It turns out that the irreducibility of boundary representations follows from this generalization of von Neumann's ergodic theorem. We refer to  \cite{BC},\cite{BM},\cite{F1},\cite{F2},\cite{LG} and \cite{KS} for examples of natural irreducible quasi-regular representations which are related to the following conjecture:
\begin{conjecture}
For a locally compact group $G$ and a spread-out probability measure $\mu$ on $G$, the quasi-regular representation associated to a $\mu$-boundary of $G$ is irreducible.
\end{conjecture}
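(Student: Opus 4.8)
Since the stated conjecture is open in full generality, I describe the strategy for the case this paper addresses: $\Gamma$ a non-elementary discrete group of isometries of a CAT$(-1)$ space $X$ acting on its Gromov boundary $\partial X$ equipped with a Patterson--Sullivan density $(\nu_{x})$ of dimension equal to the critical exponent $\delta$, so that the boundary representation $\pi$ acts on $\mathcal H = L^{2}(\partial X,\nu_{o})$ by $(\pi(\gamma)\xi)(v)=p_{\gamma}(v)^{1/2}\,\xi(\gamma^{-1}v)$, where $p_{\gamma}=d\gamma_{*}\nu_{o}/d\nu_{o}$ is the Poisson kernel raised to the power $\delta$. The plan is to run the Bader--Muchnik argument (\cite{BM}) with Roblin's equidistribution theorem (\cite{Ro}) replacing the manifold-specific mixing input.

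First I would reduce irreducibility to the convergence of an operator-valued average. Set
\[
M_{R}=\frac{1}{b_{R}}\sum_{\gamma\in\Gamma,\ d(o,\gamma o)\le R}e^{-\delta\,d(o,\gamma o)}\,\pi(\gamma),\qquad b_{R}=\sum_{\gamma\in\Gamma,\ d(o,\gamma o)\le R}e^{-\delta\,d(o,\gamma o)},
\]
and aim to prove $\langle M_{R}f,g\rangle\to\|\nu_{o}\|^{-2}\bigl(\int f\,d\nu_{o}\bigr)\overline{\bigl(\int g\,d\nu_{o}\bigr)}$ for all $f,g$ in the dense subspace $C(\partial X)\subset\mathcal H$; equivalently $M_{R}\to P$ in the weak operator topology tested on $C(\partial X)$, where $P$ is the rank-one projection onto the constant function $\mathbf 1$. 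Granting this, if $T$ lies in the commutant $\pi(\Gamma)'$ then $T$ commutes with every $M_{R}$, hence with $P$, so $T\mathbf 1=\lambda\mathbf 1$; and since $\mathbf 1$ is a cyclic vector for $\pi(\Gamma)$ one concludes $T=\lambda I$, i.e. $\pi$ is irreducible. Cyclicity of $\mathbf 1$ is itself extracted from harmonic-analysis estimates on $p_{\gamma}$ (Sullivan's shadow lemma), using that the $\Gamma$-translates of a neighbourhood basis generate the Borel structure of $\partial X$.

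The analytic heart is the convergence $M_{R}\to P$. Expanding the Poisson kernels, $\langle M_{R}f,g\rangle$ becomes the integral of a fixed continuous kernel $K_{f,g}$ on $\overline X\times\overline X$ (built from $f$, $g$ and the Busemann and Gromov-product functions based at $o$) against the measures $\mu_{R}=\frac{1}{b_{R}}\sum_{d(o,\gamma o)\le R}e^{-\delta d(o,\gamma o)}\,\delta_{\gamma o}\otimes\delta_{\gamma^{-1}o}$. Roblin's theorem asserts precisely that $\mu_{R}$ converges weakly to $\nu_{o}\otimes\nu_{o}/\|\nu_{o}\|^{2}$ on $\partial X\times\partial X$ (under the standing hypothesis that the Bowen--Margulis--Sullivan measure is finite, or that $\Gamma$ is of divergence type). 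One then checks that on the support of the limit the kernel $K_{f,g}$ equals $\overline{g(\cdot)}$ times a Poisson-kernel factor that integrates, over the first variable against $\nu_{o}$, to $\int f\,d\nu_{o}$, producing the announced rank-one expression.

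The main obstacle is this last passage to the limit: $\mu_{R}$ lives on the compact space $\overline X\times\overline X$, but $K_{f,g}$, although continuous there, is assembled from objects (the Busemann cocycle, the Gromov product) that blow up along the diagonal boundary, while Roblin's limit measure is carried by $\partial X\times\partial X$. One must therefore rule out escape of mass and verify that the limiting kernel restricted to the boundary is exactly what yields $P$. This needs uniform control on the conformal density near $\partial X$ (the shadow lemma, giving $\nu_{o}(\mathrm{shadow}(\gamma o))\asymp e^{-\delta d(o,\gamma o)}$) together with the geometric fact that the Gromov product $\langle\gamma o,v\rangle_{o}\to\infty$ uniformly as $v$ ranges outside shrinking neighbourhoods of the direction of $\gamma o$. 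For trees or rank-one symmetric spaces these are explicit computations; in a general CAT$(-1)$ space they must be carried out by comparison estimates, and that is where the bulk of the work lies. A subsidiary difficulty is that, absent finiteness of the Bowen--Margulis--Sullivan measure, Roblin's theorem is unavailable and the whole scheme collapses, which is why the irreducibility statement is established under exactly that hypothesis rather than for arbitrary $\mu$-boundaries.
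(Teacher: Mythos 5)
First, a point of logic: the statement you are addressing is stated in the paper as an open \emph{conjecture}; the paper does not prove it, but only the special case of groups in the class $\mathcal{C}$ acting on CAT($-1$) boundaries (Theorem A, hence Corollaries B and C). You correctly restrict to that case, and your overall plan (Bader--Muchnik averaging plus Roblin's equidistribution) is indeed the paper's route. However, your averaging operator is normalized incorrectly, and this is fatal to the scheme as written. You weight $\pi(\gamma)$ by $e^{-\delta d(o,\gamma o)}$ and divide by $b_{R}=\sum_{d(o,\gamma o)\le R}e^{-\delta d(o,\gamma o)}$. Since each $\pi(\gamma)$ is unitary, $\|M_{R}\|\le 1$ trivially --- but the limit is the zero operator, not the projection $P$. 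Indeed $|\langle\pi(\gamma)f,g\rangle|\le\|f\|_{\infty}\|g\|_{\infty}\,\phi(\gamma)$ with $\phi(\gamma)\asymp d(o,\gamma o)\,e^{-\frac{\delta}{2}d(o,\gamma o)}$ by the Harish--Chandra estimates (Proposition \ref{H-CHestimates}), so $\sum_{\gamma}e^{-\delta d(o,\gamma o)}\phi(\gamma)\asymp\sum_{n}e^{\delta n}\cdot e^{-\delta n}\cdot n e^{-\delta n/2}<\infty$, while $b_{R}\asymp R\to\infty$; hence $\langle M_{R}f,g\rangle\to 0$ for all continuous $f,g$, and in particular $\langle M_{R}\mathbf 1,\mathbf 1\rangle\to 0$ rather than $1$. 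The essential point of the Bader--Muchnik construction, which the paper retains, is to divide each $\pi(\gamma)$ by its own matrix coefficient $\phi_{x}(\gamma)=\langle\pi_{x}(\gamma)\mathbf 1,\mathbf 1\rangle$ (a weight that \emph{grows} like $e^{\delta d/2}/d$) and to average over annuli $C_{n}(x,\rho)$, so that $\langle\mathcal M^{n}_{x,\rho}(\mathbf 1_{\overline X})\mathbf 1,\mathbf 1\rangle\equiv 1$. With that normalization uniform boundedness is no longer free: it is the content of Proposition \ref{inegsample}, proved via the shadow lemma, the Busemann estimates of Proposition \ref{maj buseman}, and Ahlfors regularity --- this is the analytic heart that your normalization bypasses at the cost of producing a vacuous limit.

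Two further gaps, subsidiary but real. First, your $M_{R}$ carries no test function on $\overline X$, whereas the paper's operator-valued measure $\mathcal M^{n}_{x,\rho}$ includes the coefficients $D_{\gamma x}(f)$ for $f\in C(\overline X)$; it is precisely the resulting limit $\mathcal M_{x}(f)\xi=\bigl(\int\xi\,\frac{d\mu_{x}}{\|\mu_{x}\|}\bigr)\frac{1}{\|\mu_{x}\|}f|_{\partial X}$ that shows $\mathbf 1$ is cyclic (every continuous function on $\partial X$ lies in $\overline{\pi_{x}(\mathbb C\Gamma)\mathbf 1}$). Your version, even after correcting the weights, would only yield that the projection onto constants lies in the von Neumann algebra (Corollary B), and cyclicity --- which you defer to an unspecified shadow-lemma argument --- would still need to be supplied. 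Second, the identification of the limit is not a direct pairing of a continuous kernel on $\overline X\times\overline X$ against Roblin's measures: the quotient $\langle\pi_{x}(\gamma)\mathbf 1,\chi_{A}\rangle/\phi_{x}(\gamma)$ is not of that form, and the paper instead proves the comparison inequalities of Proposition \ref{deform} and Lemmas \ref{lemprepar1}--\ref{lemineg} (passing through thickened sets $A(a)$, cones $C_{R}(x,A(a))$, and a squeezing argument over the partition $U^{i},A^{j},B^{k}$) to convert matrix coefficients into orbital counting amenable to Corollary \ref{cororoblin}. Your sketch correctly flags the diagonal/escape-of-mass difficulty but does not indicate how to resolve it; the paper's resolution is exactly this chain of shadow-based estimates.
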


 In this paper, we generalize the work of U. Bader and R. Muchnik to convex cocompact groups of isometries of a \mbox{CAT(-1)} space with a non-arithmetic spectrum and to (non-uniform) lattices of a non-compact connected semisimple Lie group of rank one. Our results are based on the fundamental work of T. Roblin in \cite{Ro}. The main tool of this paper is an equidistribution theorem of T. Roblin (see Subsection \ref{BMSroblin}) which is inspired by the ideas of G. Margulis (see \cite{M}), based on the mixing property of the geodesic flow. Following the technical ideas developed in \cite{BM} and using Roblin's equidistribution theorem, we obtain a dynamical explanation of irreducibility of boundary representations in the context of CAT(-1) spaces: it comes from the mixing property of the geodesic flow. Nevertheless this approach does not work in the context of general hyperbolic groups and we refer to \cite{F1}, \cite{F2}, \cite{KS} and more recently \cite{LG} for different approaches.
 
  Moreover, we prove two equidistribution results for densities associated with the Poisson kernel and the square root of the Poisson Kernel in CAT(-1) spaces with respect to the weak* convergence of the dual space $L^{1}$ functions on the boundary.

 \subsection*{Main Results}\label{results}

The Banach space of finite signed measures on a topological compact space $Z$ is, by the Riesz representation theorem, the dual of the space of the continuous functions $C(Z)$. The Banach space of bounded linear operators from the Banach space of continuous functions to the Banach space of bounded operators on a Hilbert space will be denoted by $\mathcal{L}\big(C(Z),\mathcal{B}(\mathcal{H})\big)$. Observe that $\mathcal{L}\big(C(Z),\mathcal{B}(\mathcal{H})\big)$ is isomorphic as a Banach space to the dual of the Banach space $C(Z)\widehat{\otimes} \mathcal{H} \widehat{\otimes} \overline{\mathcal{H}}$ where $\overline{\mathcal{H}} $ denotes the conjugate Hilbert space of the complex Hilbert space $\mathcal{H}$, and $\widehat{\otimes}$ denotes the projective tensor product. Hence $\mathcal{L}\big(C(Z),\mathcal{B}(\mathcal{H})\big)$ will be called the space of \textit{operator-valued measures}.

Let $\Gamma$ be a non-elementary discrete group of isometries of $(X,d)$ a proper CAT(-1) metric space (i.e. the balls are relatively compact). We denote by $\partial X$ its Gromov boundary, and let $\overline{X}$ be the topological space $X \cup \partial X$ endowed with its usual topology that makes $\overline{X}$ compact. Recall the critical exponent $\alpha(\Gamma)$ of $\Gamma$: $$\alpha(\Gamma):=\inf \left\{s\in \mathbb{R}^{*}_{+} | \sum_{\gamma \in \Gamma} e^{-sd(\gamma x,x)} <\infty \right\}.$$Notice that the definition of $\alpha(\Gamma)$ does not depend on $x$. We assume from   now on that $\alpha(\Gamma)<\infty$.

The limit set of $\Gamma$ denoted by $\Lambda_{\Gamma}$ is the set of all accumulation points in $\partial X$ of an orbit. Namely $\Lambda_{\Gamma}:=\overline{\Gamma x}\cap \partial X$, with the closure in $\overline{X}$. Notice that the limit set does not depend on the choice of $x\in X$. Following the notations in \cite{CM}, define the geodesic hull $GH(\Lambda_{\Gamma})$ as the union of all geodesics in $X$ with both endpoints in $\Lambda_{\Gamma}$. The convex hull of $\Lambda_{\Gamma}$ denoted by $CH(\Lambda_{\Gamma})$, is the smallest subset of $X$ containing $GH(\Lambda_{\Gamma})$ with the property that every geodesic segment between any pair of points $x,y \in CH(\Lambda_{\Gamma})$ also lies in $CH(\Lambda_{\Gamma})$. We say that $\Gamma$ is \emph{convex cocompact} if it acts cocompactly on  $CH(\Lambda_{\Gamma})$.

 The translation length of an element $\gamma \in \Gamma$ is defined as $t(\gamma):=\inf \left\lbrace d(x,\gamma x),x \in X \right\rbrace.$ The \emph{spectrum} of $\Gamma$ is defined as the subgroup of $\mathbb{R}$ generated by $t(\gamma)$ where $\gamma$ ranges over the hyperbolic isometries in $\Gamma$. We say that $\Gamma$ has an arithmetic spectrum if its spectrum is a discrete subgroup of $\mathbb{R}$. We are interested in discrete groups with \emph{a non-arithmetic spectrum} because they guarantee the mixing property of the geodesic flow (see Subsection \ref{BMS}), and this condition is verified in the following cases: for isometries group of Riemannian surfaces, hyperbolic spaces and isometries groups of a CAT(-1) space such that the limit set has a non-trivial connected component. We refer to \cite{D} and to \cite[Proposition 1.6, Chapitre 1]{Ro} for more details. 

A Riemannian symmetric space $X$ of non-compact type of rank one endowed with its natural Riemannian metric is a particular case of CAT(-1) space. The space $X$ as well as its boundary $\partial X$ can be described by the quotients $X=G/K$ and $\partial X=G/Q$ where $G$ is a non-compact connected semisimple Lie group of real rank one, $K$ a maximal compact subgroup and $Q$ a minimal parabolic subgroup of $G$. A \emph{lattice} $\Gamma$ is a discrete subgroup of $G$ such that the quotient $\Gamma \backslash G$ has finite volume with respect to  the Haar measure. In this case $\Lambda_{\Gamma}=\partial X$ and $CH(\Lambda_{\Gamma})=X$. If $\Gamma \backslash G$ is a compact, we say that $\Gamma$ is a uniform lattice and this is a particular case of convex compact groups. Otherwise we say that $\Gamma$ is a \emph{non-uniform lattice}.

The foundations of Patterson-Sullivan measures theory are in the important papers \cite{Pa}, \cite{Su}. See \cite{Bou},\cite{BMo}, and \cite{Ro} for more general results in the context of CAT(-1) spaces. These measures are also called \textit{conformal densities}. \\
We denote by $M(Z)$ the Banach space of Radon measures on a locally compact space $Z$, which is identified with the dual space of compactly supported functions denoted by $C_{c}(Z)^*$, endowed with the norm $\|\mu\|=\sup\lbrace |\int_{Z}f d\mu|, \|f\|_{\infty}\leq 1,f \in C_{c}(Z) \rbrace$ where $\|f\|_{\infty}=\sup_{ z\in Z} |f(z)|$.
Recall that $\gamma_{*}\mu$ means $\gamma_{*}\mu(B)=\mu(\gamma^{-1}B)$ where $\gamma$ is in $\Gamma$ and $B$ is a borel subset of $Z$. 

We say that $\mu$ is a $\Gamma$-invariant conformal density  of dimension $\alpha \geq 0$, if $\mu$ is a map which satisfies the following conditions:
\begin{enumerate}
	\item $\mu$ is a map from $x\in X \mapsto \mu_{x} \in M(\overline{X})$, i.e. $\mu_{x}$ is a positive finite measure (density).
	\item For all $x$ and $y$ in $X$, $\mu_{x}$ and $\mu_{y}$ are equivalent, and we have $$\frac{d\mu_{y}}{d\mu_{x}}(v)=\exp{(\alpha \beta_{v}(x,y))}$$ (conformal of dimension $\alpha$).
	\item For all $\gamma \in \Gamma$, and for all $x \in X$ we have $\gamma_{*}\mu_{x}=\mu_{\gamma x}$  (invariant),
\end{enumerate}
where $\beta_{v}(x,y)$ denotes the horoshperical distance from $x$ to $y$ relative to $v$ (see Subsection \ref{CATspaces}).

	If $X$ is a \mbox{CAT(-1)} space and if $\Gamma$ is a discrete group of isometries of $X$, then there exists a $\Gamma$-invariant conformal density of dimension $\alpha(\Gamma)$ whose support is $\Lambda_{\Gamma}$. A proof can be found in \cite{Pa} and \cite{Su} for the case of hyperbolic spaces and see \cite{BM} and \cite{Bou} for the case of \mbox{CAT(-1)} spaces.

  A conformal density $\mu$ gives rise to unitary representations $(\pi_{x})_{x\in X}$ defined for $x\in X$ as:
 
	\[
		\pi_x:\Gamma\to \mathcal{U}\big(L^2(\partial X,\mu_x) \big)
	\]
	\begin{equation}\label{boundaryrep}
	   (\pi_x(\gamma)\xi)(v)=\xi(\gamma^{-1}v)\exp\left(\frac{\alpha}{2}\beta_v(x,\gamma x)\right),
	   	\end{equation}
		
where $\xi \in L^{2}(\partial X,\mu_{x})$ and $v\in \partial X$.

These representations are unitarily equivalent: the multiplication operator
\[
	U_{xy}:\xi \in L^2(\partial X,\mu_x)\to  \big(m_{xy}\cdot \xi \big)\in L^2(\partial X,\mu_y)
\]
defined by the function						   
\[
	m_{xy}(v)=\exp\bigg(-\frac{\alpha}{2} \beta_v(x,y)\bigg),
\]
is a unitary operator which intertwines the unitary representations $\pi_x$ and $\pi_y$.

 The matrix coefficient 
\begin{equation}\label{HCHfunction}
\phi_{x}: \Gamma \rightarrow \langle \pi_{x}(\gamma)\textbf{1}_{\partial X}, \textbf{1}_{\partial X}\rangle \in \mathbb{R}^{+},
\end{equation} where $\textbf{1}_{\partial X}$ is the characteristic function of $\partial X$, is called the \emph{Harish-Chandra function}.

 Pick $x$ in $X$, and a positive real number $\rho $ and define for all integers $n$ such that $n\geq \rho$ the annulus $$C_{n}(x,\rho)=\lbrace \gamma \in \Gamma | n-\rho\leq d(\gamma x, x) < n+\rho \rbrace .$$ Assume that $C_{n}(x,\rho)$ is not empty for $n\geq N_{x,\rho}$ where $N_{x,\rho}$ denotes some non-negative integer. Let $|C_{n}(x,\rho)|$ be the cardinality of $C_{n}(x,\rho)$, let $D_{y}$ be the unit Dirac mass centered at a point $y\in X$ and consider the sequence of operator-valued measures defined for all $n \geq N_{x,\rho}$ as:
\begin{equation}\label{suitesmesures} 
 \mathcal{M}^{n}_{x,\rho} :f\in C(\overline{X}) \mapsto \frac{1}{|C_{n}(x,\rho)|}\sum_{\gamma \in C_{n}(x,\rho)} D_{\gamma x} (f) \frac{\pi_{x}(\gamma)}{\phi_{x}(\gamma)} \in \mathcal{B}\big(L^{2}(\partial X,\mu_{x}) \big).
  \end{equation}
  If $f\in C(\overline{X})$, we denote by $f_{|_{\partial X}}$ its continuous restriction to the space $\partial X$.
Consider also the operator-valued measure $\mathcal{M}_{x}$ defined  as: 
\begin{equation}\label{mesurelimite}
\mathcal{M}_{x}: f\in C(\overline{X}) 
 \longmapsto \Bigg( \mathcal{M}_{x}(f):\xi \mapsto  \left( \int_{\partial X}\xi \frac{d\mu_{x}}{\|\mu_{x}\|}\right)\frac{1}{\|\mu_{x}\|}f_{|_{\partial X}} \Bigg)  \in  \mathcal{B} \big(L^{2}(\partial X,\mu_{x}) \big).
\end{equation}

\subsubsection*{The class $\mathcal{C}$:}
Let $\Gamma$ be a discrete group of isometries of a CAT(-1) space $X$. The boundary possesses a natural structure of metric space and more specifically the boundary carries a family of visual metric $(d_{x})_{x\in X}$ (see Section \ref{CATspaces}). Let $\mu=(\mu_{x})_{x\in X}$ be a conformal densities of dimension $\alpha$.\\
Let $(Z,d)$ be a compact metric measure space with a metric $d$ and a measure $\mu$. We denote by
{\rm Diam}$(Z)$ the diameter of $Z$. We say that the metric measure space $(Z,d,\mu)$ is Ahlfors $\alpha$-regular for some $\alpha>0$ if there exists a positive constant $C > 0$ such that for all $z$ in $Z$ and $0 < r < \mbox{Diam}(Z)$ we have
$$ C^{-1} r^{\alpha}\leq \mu(B(z,r))\leq Cr^{\alpha}.$$

\begin{defi}
Then we say that $\Gamma$ is in $\mathcal{C}$ if $\Gamma$ is a discrete group of isometries of a CAT(-1) space such that:
 \begin{enumerate}
 \item $\Gamma$ has \emph{non-arithmetic spectrum.}
 \item The metric measure space $(\Lambda_{\Gamma},d_{x},\mu_{x})$ is $\alpha$-Ahlfors regular for some $x$ in $X$.
 \end{enumerate}
 \end{defi}
 
 In particular the class $\mathcal{C}$ contains the convex cocompact groups of isometries of a CAT(-1) space with a non-arithmetic spectrum and the non-uniform lattices in  noncompact semisimple Lie group of rank one acting by isometries on their rank one symmetric spaces of noncompact type where $d$ is a left invariant Riemannian metric. Moreover the class $\mathcal{C}$ contains groups which are neither convex cocompact nor lattices, see Remark \ref{GMF}.\\

 The main result of this paper is the following theorem:

\begin{theorem1}\label{maintheorem} (Equidistribution \`a la Bader-Muchnik) \\Let $\Gamma$ be in $\mathcal{C}$ and let $\mu$ be a $\Gamma$-invariant conformal density of dimension $\alpha(\Gamma)$. Then for each $x$ in $ X$ there exists $\rho>0$ such that  $$\mathcal{M}^{n}_{x,\rho} \rightharpoonup \mathcal{M}_{x}$$ as $n \rightarrow + \infty$ with respect to  the weak* topology of the Banach space $\mathcal{L}\big(C(\overline{X}),\mathcal{B}(L^{2}(\partial X,\mu_{x}) )\big)$.
\end{theorem1}

With the same hypothesis of the above theorem, we deduce immediately an ergodic theorem \`a la von Neumann for the $\Gamma$-quasi-invariant measures $\mu_{x}$ on $\partial X$.\\Let $x\in X$, and denote by $\mathcal{Q}_{x}$ the orthogonal projection onto the subspace of constant functions of $L^{2}(\partial X,\mu_{x})$.

\begin{coro1}\label{maincoro}(Ergodicity \`a la von Neumann)\\
 For all $x\in X$ there exists $\rho>0$ such that $$\frac{\|\mu_{x}\|^{2}}{|C_{n}(x,\rho)|}\sum_{\gamma \in C_{n}(x,\rho)} \frac{\pi_{x}(\gamma)}{\phi_{x}(\gamma)} \rightarrow  \mathcal{Q}_{x}$$ as $n\rightarrow +\infty$ with respect to  the weak operator topology in $\mathcal{B}(L^{2}(\partial X,\mu_{x}))$. 
\end{coro1}

\begin{remark}
Consider an action of $\mathbb{Z}$ on a finite measure space $(X,\mu)$ by measure preserving transformations. Von Neumann's very well-known ergodic theorem states, in the functional analytic framework, that the ergodicity of the action is equivalent to the convergence $$\frac{1}{2n+1}\sum_{k=-n}^{n}\pi(k) \rightarrow \mathcal{Q}$$ with respect to  the weak operator topology, where $\pi$ is the quasi-regular representation obtained from the action of $\mathbb{Z}$, and where $\mathcal{Q}$ is the orthogonal projection onto the space of constant functions of the space $L^{2}(X,\mu)$. This theorem belongs to the foundation of ergodic theory and remains an important source of inspiration (see for example \cite{GN}). 

\end{remark}

With the same hypothesis of Theorem A we have:
\begin{coro2}\label{maincoro2}(Irreducibility)

For all $x\in X$, the representations $\pi_{x}:\Gamma \rightarrow \mathcal{U}(L^{2}(\partial X, \mu_{x}))$ are irreducible.
\end{coro2}
Notice that Corollary C for lattices is well known, see \cite{CS} and the method of \cite{LG} applies to the case of convex-cocompact groups. Nevertheless, this approach based on Roblin's theorem unify the irreducibility via an ergodic theorem for quasi-invariant measures and gives precise asymptotic limit of operators for the groups in the class $\mathcal{C}$. Moreover this approach seems to be the right approach to prove irreducibility in the more general context of boundary representations associated with discrete groups with parabolic elements. Note that for some discrete groups with parabolic elements acting on the hyperbolic plane $\mathbb{H}^{n}$ we obtain obtain the irreducibility of boundary representation (see Remark \ref{GMF}). The approach developped in \cite{LG} will not be fruitful whenever the group possesses parabolic elements since it works only for hyperbolic groups and this stengthens the dynamical approach of Bader and Muchnik developed here with Roblin's equisditribution theorem.
\subsubsection*{The Poisson kernel}\label{poissonker}
Recall the definition of the Poisson kernel in the context of $\mbox{CAT(-1)}$ spaces. Let $\mu$ be a $\Gamma$-invariant conformal density of dimension $\alpha$. First define
\begin{equation}\label{Poissonkernel1}
   p: (x,y,v)\in X \times X \times \partial{X} \mapsto p(x,y,v)=\exp\big( \beta_v(x,y)\big)\in \mathbb{R}^{+}.
\end{equation}
Fix $x\in X$ a base point and define the \textit{Poisson kernel associated to the measure $\mu_{x}$} as:

\begin{equation}\label{Poissonkernel}
   P: (y,v)\in X \times \partial{X} \mapsto P(y,v)=p^{\alpha}(x,y,v)=\exp\big(\alpha \beta_v(x,y)\big)\in \mathbb{R}^{+}.
\end{equation}

We follow the notations of Sj\"ogren (\cite{Sjo}) and we define for $\lambda\in\mathbb R$
and $f\in L^1(\partial X,\mu_x)$:
\begin{equation*}\label{sjogrennot}
   P_{\lambda}f(y)=\int_{\partial X}  P(y,v)^{\lambda+1/2}f(v)d\mu_x(v).
\end{equation*}

 Furthermore we denote by $\nu_{y}$ the measure associated to $P_0$ defined as 
 \begin{equation}\label{mesurenu}
 d\nu_{y}(v)=\frac{P(y,v)^{1/2}}{P_{0}\textbf{1}_{\partial X} (y)} d\mu_{x} (v).
 \end{equation}
 Observe that the measure $\nu_{y}$ is a probability measure. We refer to Subsection \ref{BMS} for the definition of the Bowen-Margulis-Sullivan measure occuring in the following statement:

\begin{propD}(Equidistribution)

$\bullet$ Let $\Gamma$ be a discrete group of isometries of a CAT(-1) space $X$ with a non-arithmetic spectrum. Let $\mu$ be a $\Gamma$-invariant conformal density of dimension $\alpha(\Gamma)$ the critical exponent of the group.
Assume that $\Gamma$ has a finite Bowen-Margulis-Sullivan measure and   assume that there exists a constant $C>0$ such that for all $x,y$ in $X$ we have $$\frac{\|\mu_{x}\|}{\|\mu_{y} \|} \leq C .$$
 Then for all $x\in X$ and for all $\rho>0$ we have $$\frac{1}{|C_{n} (x,\rho)|}\sum_{\gamma \in C_{n}(x,\rho)} \frac{\mu_{\gamma  x}}{\| \mu_{\gamma x}\|} \rightharpoonup \frac{\mu_{x}}{\| \mu_{x}\|}$$ with respect to  the weak* convergence of $L^{1}(\partial X,\mu_x)^{*}$.\\ 
 
$\bullet$ Let $\Gamma$ be in $\mathcal{C}$, then for all $x$ in $CH(\Lambda_{\Gamma})$ there exists $\rho$ such that
 
 $$\frac{1}{|C_{n} (x,\rho)|}\sum_{\gamma \in C_{n}(x,\rho)}\nu_{\gamma  x} \rightharpoonup \nu_{x}$$ with respect to  the weak* convergence of $L^{1}(\partial X,\nu_x)^{*}$.

 \end{propD}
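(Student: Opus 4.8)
The plan is to deduce both statements from Roblin's equidistribution theorem (Subsection \ref{BMSroblin}), which, under the standing hypotheses (non-arithmetic spectrum, finite Bowen--Margulis--Sullivan measure), gives for the annuli $C_n(x,\rho)$
\[
\frac{1}{|C_n(x,\rho)|}\sum_{\gamma\in C_n(x,\rho)}D_{\gamma x}\ \rightharpoonup\ \frac{\mu_x}{\|\mu_x\|}
\qquad\text{in }M(\overline X)=C(\overline X)^*,
\]
the admissible range of $\rho$ being as in the two bullets. Everything then rests on comparing $\mu_{\gamma x}/\|\mu_{\gamma x}\|$ (resp.\ $\nu_{\gamma x}$) with the Dirac mass $D_{\gamma x}$ when $\gamma$ is large. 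Two simplifications are worth recording first: since $\mu$ is $\Gamma$-invariant, $\mu_{\gamma x}=\gamma_*\mu_x$, hence $\|\mu_{\gamma x}\|=\|\mu_x\|$ for every $\gamma\in\Gamma$; and since $\beta_v(x,x)=0$ one has $\nu_x=\mu_x/\|\mu_x\|$, so in the second bullet $L^1(\partial X,\nu_x)=L^1(\partial X,\mu_x)$ and the limit measure is $\mu_x/\|\mu_x\|$.

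First I would test the convergence against a fixed $f\in C(\overline X)$. Writing $f(\gamma x)=\int_{\overline X}f\,dD_{\gamma x}$ and $\int_{\partial X}f\,d\bigl(\mu_{\gamma x}/\|\mu_{\gamma x}\|\bigr)$ for the two relevant integrals, the triangle inequality bounds the difference of the two averages over $C_n(x,\rho)$ by
\[
\sup_{\gamma\in\Gamma,\ d(x,\gamma x)\ge n-\rho}\ \Bigl|\ \int_{\partial X}f\,d\bigl(\mu_{\gamma x}/\|\mu_{\gamma x}\|\bigr)-f(\gamma x)\ \Bigr|.
\]
The heart of the argument is a \emph{uniform concentration estimate}: for every $\varepsilon>0$ there is $R$ such that $d(x,y)\ge R$ forces $\mu_y/\|\mu_y\|$ to give mass $\ge 1-\varepsilon$ to the $d_x$-ball of radius $\varepsilon$ about the endpoint $\xi_y\in\partial X$ of a geodesic ray from $x$ through $y$ (and the analogue for $\nu_y$). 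Granting this, uniform continuity of $f$ on the compact space $\overline X$, together with the fact that $y$ and $\xi_y$ become $\overline X$-close as $d(x,y)\to\infty$, makes the displayed supremum tend to $0$; combined with Roblin's theorem this proves the convergence against every $f\in C(\overline X)$, hence (by Tietze extension, since the measures appearing in the statement are carried by $\partial X$) against every $f\in C(\partial X)$. For the second bullet the identical scheme runs with $\nu_{\gamma x}$ in place of $\mu_{\gamma x}/\|\mu_{\gamma x}\|$.

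To pass from continuous test functions to arbitrary $h\in L^1(\partial X,\mu_x)$, i.e.\ to genuine weak* convergence in $L^1(\partial X,\mu_x)^*$ (resp.\ $L^1(\partial X,\nu_x)^*$), I would establish a uniform $L^\infty$ bound
\[
\sup_{n}\ \Bigl\|\ \frac{1}{|C_n(x,\rho)|}\sum_{\gamma\in C_n(x,\rho)}\frac{d\bigl(\mu_{\gamma x}/\|\mu_{\gamma x}\|\bigr)}{d\mu_x}\ \Bigr\|_{L^\infty(\mu_x)}\ <\ \infty
\]
and its analogue with $\nu_{\gamma x}$ and $\nu_x$; a routine three-$\varepsilon$ argument against a continuous approximation of $h$ then upgrades the convergence. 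Using the relation $\beta_v(x,\gamma x)=2(\gamma x\mid v)_x-d(x,\gamma x)$, the exponential asymptotics of $|C_n(x,\rho)|$, and the lower estimate for the Harish--Chandra function $\phi_x$, this bound reduces in each case to a uniform-in-$v$ upper bound for a weighted orbital sum $\sum_{\gamma\in C_n(x,\rho)}e^{c\alpha(\gamma x\mid v)_x}$ with $c\in\{1,2\}$; here the Ahlfors $\alpha$-regularity packaged into the class $\mathcal C$ settles the second bullet, whereas for the first bullet one argues with Roblin's counting estimates and the comparison $\|\mu_x\|/\|\mu_y\|\le C$ --- which, crucially, is assumed for \emph{all} $x,y\in X$, not merely along the orbit, precisely so that the relevant shadows can be measured.

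The main obstacle is exactly these two uniform-in-$\gamma$ geometric facts: the concentration of the conformal density $\mu_y$ (resp.\ of $\nu_y$) near $\xi_y$, and the $L^\infty$ bound on the averaged densities. Both are shadow-lemma type statements, transparent once $(\Lambda_\Gamma,d_x,\mu_x)$ is Ahlfors regular --- this is why the second bullet is phrased for $\Gamma\in\mathcal C$ and $x\in CH(\Lambda_\Gamma)$ --- but in the general CAT(-1) situation of the first bullet they must be wrung out of finiteness of the Bowen--Margulis--Sullivan measure and the uniform comparison of the total masses $\|\mu_x\|$.
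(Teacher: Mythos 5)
Your proposal is correct and follows essentially the same two-step scheme as the paper: a uniform $L^{\infty}$ bound on the averaged Radon--Nikodym derivatives (the paper's functions $H^{n}_{x,\rho}$, Proposition \ref{inegsample2}) to upgrade from continuous test functions to $L^{1}$, and identification of the limit via Roblin's theorem for Dirac masses. The only difference is organizational: where you prove a uniform concentration estimate for $\mu_{y}/\|\mu_{y}\|$ (resp.\ $\nu_{y}$) near $\xi_{y}$ and control the error $\bigl|\int f\,d\nu_{\gamma x}-f(\gamma x)\bigr|$ directly, the paper packages the very same estimates (Lemma \ref{ineghch} plus the lower Harish--Chandra bound, resp.\ the mass comparison $\|\mu_{x}\|/\|\mu_{y}\|\leq C$) into the Dirac--Weierstrass property of the kernels and applies Roblin to the continuous extension $\overline{\mathcal{P}}f$, with no error term.
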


 The method of proofs of Theorem A and D consists of two steps: given a sequence of functionals of the dual of a separable Banach space:\\

 \textbf{Step 1:} The sequence is uniformly bounded: existence of accumulation points (by the Banach-Alaoglu theorem).\\
 
 \textbf{Step 2:} Identification of the limit using equidistribution theorems (only one accumulation point).

 \subsection*{Structure of the paper} 
 
 In Section \ref{section1} we remind the reader of some standard facts about CAT(-1) spaces as well as the definition of Bowen-Margulis-Sullivan measures and Roblin's equidistribution theorem. We recall also some general facts about Banach spaces and projective tensor products, and we give a general construction of operator-valued measures that we investigate in the context of CAT(-1) spaces. \\In Section \ref{section3} we prove uniform boundedness for two sequences of functions, and we deduce \textbf{Step 1} of our results.\\ In Section \ref{section4} we use Roblin's equidistribution theorem to achieve \textbf{Step 2} of our main result. \\ In Section \ref{conclusion} we prove Theorem A, Corollary B and Corollary C. 
\\ In Section \ref{equidistribution} we prove Theorem D using the dual inequality established in Section \ref{section3}.

 \subsection*{Acknowledgements}
 I would like to thank Christophe Pittet and Uri Bader for useful discussions and criticisms. I would also like  to thank Peter Ha\"issinsky, Marc Bourdon and Davis Simmons for discussions about Ahlfors regularity. I am grateful to Felix Pogorzelski, Dustin Mayeda  and Antoine Pinochet Lobos for their remarks on this work. Finally, I would like to thank the  referee for carefully reading the paper. 
\section{Preliminaries}\label{section1}
 
 \subsection{CAT(-1) spaces}\label{CATspaces}
 In this section we survey the geometry of CAT(-1) spaces. We freely rely on \cite{Bou} where the reader could consult for further details.\\
 
A \mbox{CAT(-1)} space is a metric geodesic space such that every geodesic triangle is thinner than its comparison triangle in the hyperbolic plane, see \cite[Introduction]{BH}. Let $(X,d)$ be a proper \mbox{CAT(-1)} space.
 A geodesic ray of $(X,d)$ is an isometry: $$r:I\rightarrow X,$$ where $I=[0,+\infty)\subset \mathbb{R}$. Two geodesic rays are equivalent if the Hausdorff distance between their images are bounded, equivalently $\sup_{t\in I} d(r_{1}(t),r_{2}(t)) <+\infty$. If $r$ is a geodesic ray, $r(+\infty)$ denotes its equivalence class.  The boundary $\partial X$ is defined as the set of equivalence classes of geodesic rays. 

A geodesic segment of $(X,d)$ is an isometry: $$r:I\rightarrow X,$$ where $I=[0,a]$ with $a<\infty$. 

Fix a base point $x$. We denote by  $\mathcal{R}(x)$ the set of geodesic rays and  of geodesic segments starting at $x$ with the following convention: if $r$ is a geodesic segment defined on $[0,a]$, we set $r(t)=r(a)$ for all $t>a$. Hence we have a natural map 
\begin{align*}
\mathcal{R}(x)&\rightarrow \overline{X}=X\cup \partial X\\
r &\mapsto r(+\infty),
\end{align*} which is surjective. The set $\mathcal{R}(x)$ is endowed with the topology of uniform convergence on compact subsets of $[0,+\infty)$. By the Arzela-Ascoli theorem,  $\mathcal{R}(x)$ is a compact space. Hence, endowed with the quotient topology,  $\overline{X}$ is compact.  
Notice that the topology on $\overline{X}$ does not depend on the choice of $x$, see \cite[3.7 Proposition (1), p. 429]{BH}.\\
 
Since the CAT(-1) spaces are a particular class of general $\delta$-hyperbolic spaces we have the following inequality: for all     $x,y,z,t\in \overline{X}$ 
\begin{equation}\label{hyperbolic}
(x,z)_{t}\geq \min \lbrace (x,y)_{t},(y,z)_{t} \rbrace-\delta, 
\end{equation}
see \cite[3.17 Remarks (4), p. 433]{BH}.

Let $x$ be in $X$, and let $r$ be a geodesic ray. By the triangle inequality the function $t\mapsto  d(x,r(t))-t$ is decreasing and bounded below. Recall that the Busemann function associated to a geodesic ray $r$, is defined as the function $$b_{r}(x)=\lim_{t\rightarrow \infty} d(x,r(t))-t.$$ 

Let $x$ and $y$ be in $X$, and let $v$ be in $\partial X$. Let $r$ be a geodesic ray whose extremity is $v$, namely $r(+\infty)=v$. The limit $\lim_{t\mapsto \infty} d(x,r(t))-d(y,r(t))$ exists, is equal to $b_{r}(x)-b_{r}(y)$, and is independent of the choice of $r$. The horospherical distance from $x$ to $y$ relative to $v$ is defined as 
\begin{equation}\label{horospherical}
\beta_{v}(x,y)=\lim_{t\rightarrow \infty} d(x,r(t))-d(y,r(t)).
\end{equation}
It satisfies for all $v\in \partial X$, and for all $x,y \in X$ that
\begin{equation}\label{Bus1}
 \beta_{v}(x,y)=-\beta_{v}(y,x)
\end{equation}
\begin{equation}\label{Bus2}
\beta_{v}(x,y)+\beta_{v}(y,z)=\beta_{v}(x,z)
\end{equation}
\begin{equation}\label{Bus3}
\beta_{v}(x,y)\leq d(x,y).
\end{equation}

If $\gamma$ is an isometry of $X$ we have
\begin{equation}\label{Bus4}
\beta_{\gamma v}(\gamma x,\gamma y)=\beta_{v}(x,y).
\end{equation}
Recall that the Gromov product of two points $a,b\in X$ relative to $x\in X$
is 
\[
	(a,b)_x=\frac{1}{2}(d(x,a)+d(x,b)-d(a,b)).
\]
Let $v,w$ be in $\partial X$ such that $v\neq w$. If $a_n\to v\in\partial X$, $b_n\to w\in\partial X$, then
\[
	(v,w)_x=\lim_{n\to\infty}(a_n,b_n)_x
\]
exists and does not depend on $v$ and $w$. 

Let $r$ be a geodesic ray which represents $v$.
We have \[
	(v,y)_x=\lim_{t\rightarrow +\infty}\frac{1}{2}(d(x,r(t))+d(x,y)-d(r(t),y)),
\]
then we obtain:
\begin{equation}\label{buseman}
\beta_{v}(x,y)=2(v,y)_{x}-d(x,y).
\end{equation}

 Besides, if $q\in X$ is a point of the geodesic defined by
$v$ and $w$, then we also have:
$$
   (v,w)_x=\frac{1}{2}(\beta_v(x,q)+\beta_w(x,q)). 
$$
The formula 
\begin{equation}\label{distance}
	d_x(v,w)=\exp\big(-(v,w)_x\big)
\end{equation}
defines a distance on $\partial  X$ (we set $d_x(v,v)=0$). This is due to M. Bourdon in the context of CAT(-1) spaces, we refer to \cite[ 2.5.1 Th\'eor\`eme]{Bou} for more details. 
We have the following comparison formula:
\[
	d_y(v,w)=\exp\left(\frac{1}{2}(\beta_v(x,y)+\beta_w(x,y))\right)d_x(v,w).
\] 	

 We say that $(d_{x})_{x\in X}$ is a family of visual metrics.
 A ball of radius $r$ centered at $v\in \partial X$ with respect to  $d_{x}$ is denoted by $B(v,r)$.  A ball of radius $r$ centered at $y\in X$ is denoted by $B_{X}(y,r)$.

If $\gamma$ is an isometry of $(X, d)$, its conformal factor at $v\in\partial X$
is:
\[
	\lim_{w\to v}\frac{d_x(\gamma v,\gamma w)}{d_x(v,w)}=\exp\big(\beta_v(x,\gamma^{-1}x)\big),
\]
(see \cite[2.6.3 Corollaire]{Bou}).

If $x$ and $y$
are points of $X$ and $R$ is a positive real number, we define the shadow $$\mathcal{O}_{R}(x, y)$$
to be the set of $v$ in $\partial X$ such that the geodesic ray issued from $x$ with limit
point $v$ hits the closed ball of center $y$ with radius $R>0$.

The Sullivan shadow lemma is a very useful tool in ergodic theory of discrete groups acting on a CAT(-1) space.  See for example \cite[Lemma 1.3]{Ro} for a proof.
\begin{lemma}\label{shadowlem}(D. Sullivan)
Let $\Gamma$ be a discrete group of isometries of X. Let $\mu=(\mu_{x})_{x\in X}$ a
$\Gamma$-invariant conformal density of dimension $\alpha$. Let $x$ be in $X$. Then for $R$ large enough there exists $C>0$ such that for all $\gamma \in \Gamma$: $$\frac{1}{C}\exp\bigg({-\alpha d(x,\gamma x)}\bigg)\leq\mu_{x}\bigg(\mathcal{O}_{R}(x,\gamma  x)\bigg)\leq C \exp\bigg({-\alpha  d(x, \gamma x)}\bigg).$$
\end{lemma}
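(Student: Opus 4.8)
The plan is to reduce the statement to two ingredients: a pointwise estimate of the horospherical distance $\beta_v(x,\gamma x)$ valid for $v$ in the shadow, and the interplay between the conformal behaviour of $\mu$ and its $\Gamma$-invariance, which transports the computation from the fixed base point $x$ to the moving point $\gamma x$. Throughout I abbreviate $\mathcal{O}_R:=\mathcal{O}_R(x,\gamma x)$. First I would record the \emph{geometric step}: if $v\in\mathcal{O}_R$, i.e. the ray $[x,v)$ meets $B_X(\gamma x,R)$, then $[x,v)$ fellow-travels $[x,\gamma x]$ almost all the way to $\gamma x$, so by thinness of triangles \eqref{hyperbolic} the Gromov product satisfies $(v,\gamma x)_x\ge d(x,\gamma x)-C'$ for a constant $C'=C'(R,\delta)$. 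Combining this with the identity $\beta_v(x,\gamma x)=2(v,\gamma x)_x-d(x,\gamma x)$ from \eqref{buseman} and the bound \eqref{Bus3}, I obtain
$$d(x,\gamma x)-2C'\;\le\;\beta_v(x,\gamma x)\;\le\;d(x,\gamma x),\qquad v\in\mathcal{O}_R.$$

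Next, the \emph{transport step}. Using invariance $\gamma_*\mu_x=\mu_{\gamma x}$ together with the conformal relation $\tfrac{d\mu_{\gamma x}}{d\mu_x}(v)=\exp(\alpha\beta_v(x,\gamma x))$, I integrate the pointwise bounds above over $\mathcal{O}_R$ to sandwich $\mu_{\gamma x}(\mathcal{O}_R)$ between $e^{\alpha(d(x,\gamma x)-2C')}\mu_x(\mathcal{O}_R)$ and $e^{\alpha d(x,\gamma x)}\mu_x(\mathcal{O}_R)$, equivalently
$$e^{-\alpha d(x,\gamma x)}\,\mu_{\gamma x}(\mathcal{O}_R)\;\le\;\mu_x(\mathcal{O}_R)\;\le\;e^{2\alpha C'}e^{-\alpha d(x,\gamma x)}\,\mu_{\gamma x}(\mathcal{O}_R).$$
It therefore suffices to bound $\mu_{\gamma x}(\mathcal{O}_R)$ above and below by positive constants independent of $\gamma$. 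The upper bound is immediate, since pushforward preserves total mass: $\mu_{\gamma x}(\mathcal{O}_R)\le\|\mu_{\gamma x}\|=\|\mu_x\|$.

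The \emph{lower bound} is where the real work lies. I would control the complement of $\mathcal{O}_R$ seen from $\gamma x$. Using the standard $\delta$-hyperbolic fact that a Gromov product based at $\gamma x$ computes, up to an additive constant, the distance from $\gamma x$ to a geodesic, one checks $\mathcal{O}_R\supseteq\{v:(x,v)_{\gamma x}\le R-C'\}$, so the complement lies in $\{v:(x,v)_{\gamma x}\ge R-C'\}$. Pulling this set back by the isometry $\gamma^{-1}$ and invoking isometry-invariance of Gromov products (and $\mu_{\gamma x}=\gamma_*\mu_x$), I convert it, via \eqref{distance}, into a $d_x$-ball of radius $\sim e^{-(R-C')}$ about the direction of $\gamma^{-1}x$ seen from $x$:
$$\|\mu_x\|-\mu_{\gamma x}(\mathcal{O}_R)=\mu_{\gamma x}(\partial X\setminus\mathcal{O}_R)=\mu_x\big(\{w:(\gamma^{-1}x,w)_x\ge R-C'\}\big)\le\sup_{\eta\in\partial X}\mu_x\big(\overline{B}(\eta,e^{-(R-C')})\big).$$
Since $\Gamma$ is non-elementary, $\mu_x$ is non-atomic, and $\partial X$ is compact, so the right-hand supremum tends to $0$ as $R\to\infty$; choosing $R$ large makes it $\le\tfrac12\|\mu_x\|$ uniformly in $\gamma$, whence $\mu_{\gamma x}(\mathcal{O}_R)\ge\tfrac12\|\mu_x\|>0$. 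Substituting the two-sided bound on $\mu_{\gamma x}(\mathcal{O}_R)$ into the transport inequality yields the constant $C$ of the statement.

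I expect the lower bound to be the only genuine obstacle: the upper bound and the transport step are formal once the geometric estimate is in hand, whereas the lower bound rests on the uniform (in the centre) decay of the $\mu_x$-measure of small balls, which in turn depends on the non-atomicity of a conformal density for a non-elementary group and on the compactness of $\partial X$. A minor point to dispatch separately is that for the finitely many $\gamma$ with $d(x,\gamma x)<R-C'$ the complement set is empty and the lower bound is trivially the full mass $\|\mu_x\|$, so all the content resides in the regime of large $d(x,\gamma x)$.
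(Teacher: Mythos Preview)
The paper does not prove this lemma; it only cites Roblin \cite[Lemma 1.3]{Ro}. Your argument is correct and is essentially the standard one found there: the upper bound is formal, and for the lower bound you control $\mu_{\gamma x}$ of the complement of the shadow by the $\mu_x$-mass of a small $d_x$-ball, then invoke non-atomicity of $\mu_x$ (which holds because $\Gamma$ is non-elementary, a standing assumption in the paper) together with compactness of $\partial X$ to obtain uniformity in $\gamma$. One cosmetic point: when you argue that $\{w:(\gamma^{-1}x,w)_x\ge R-C'\}$ is contained in a ball, the hyperbolic inequality \eqref{hyperbolic} only gives diameter $\le e^{-(R-C'-\delta)}$, not $e^{-(R-C')}$; this is harmless once the extra $\delta$ is absorbed into the constants.
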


\subsection{Bowen-Margulis-Sullivan measures and Roblin's equidistribution theorem}\label{BMS}
We follow \cite[Chapitre 1 Pr\'eliminaires, 1C. Flot g\'eod\'esique]{Ro} where the reader could find more details.\\

In \cite{Su}, D. Sullivan constructs measures on the unit tangent bundle of $X$ where $X$ is the $n$-dimensional real hyperbolic space, and proves striking results for this new class of measures. We refer to \cite{Su} for more details about these measures. We recall the definitions of these analogous measures in \mbox{CAT(-1)} spaces.\\
Let $SX$ be the set of isometries from $\mathbb{R}$ to $(X,d)$ endowed with the topology of uniform convergence on compact subsets of $\mathbb{R}$. In other words, $SX$ is the set of geodesics of $X$ parametrized by $\mathbb{R}$.   We have a canonical ``projection" from $SX$ to $X$, playing the role of the projection from the unit tangent of bundle of a manifold to the manifold, which associates to $u\in SX$ a point in $X$. Indeed, notice this map in the setting of CAT(-1) spaces may be not surjective since all the geodesics are not bi-infinite.\\
 The trivial flow on $\mathbb{R}$ induces a continuous flow $(g_{t})_{t\in \mathbb{R}}$ on $SX$, called the geodesic flow. For $u\in SX$, we will denote by $g_{+\infty}(u)$ the end of the geodesic determined by $u$ for the positive time and $g_{-\infty}(u)$ the end of the geodesic for the negative time. Let $\partial^{2}X$ be the set: $\partial X \times \partial X - \left\{ (x,x)|x\in \partial X \right\}$. We recall now the so-called \emph{Hopf parametrization} in CAT(-1) spaces and to do so we fix an origin $x\in X$. We have an identification of $SX$ with $\partial ^{2}X\times \mathbb{R}$ via $$u\mapsto (g_{-\infty}(u),g_{+\infty}(u),\beta_{g_{-\infty}(u)}(u,x)).$$  Observe that $\Gamma$ acts on $\partial ^{2}X\times \mathbb{R}$ by $\gamma \cdot(v,w,s )=(\gamma v,\gamma  w,s+\beta_{v}(x,\gamma^{-1} x) )$,
and $\mathbb{R}$ acts on $\partial ^{2}X\times \mathbb{R}$ by translation $t\cdot(v,w,s )=g_{t}((v,w,s))=(v,w,s+t )$.  Notice these actions commute on $SX$.\\  
Let $\mu$ be a $\Gamma$-invariant conformal density of dimension $\alpha$. The Bowen-Margulis-Sullivan measure which is referred to as the \emph{BMS measure} $m$ on $SX$ is defined as: $$dm(u)=\frac{d\mu_{x}(v) d\mu_{x}(w) ds}{d_{x}(v,w)^{2\alpha}}\cdot $$
The measure $m$ is invariant by the action of the geodesic flow, and observe also that $m$ is a $\Gamma$-invariant measure.
We denote by $m_{\Gamma}$ the measure on the quotient $SX/\Gamma$. More precisely if $D$ is a fundamental domain for the action of $\Gamma$ on $SX$, if $h$ is a compactly supported function in $C_{c}(SX/ \Gamma)$ and if $\tilde{h}$ denotes  the lift of $h$ in $C(SX)$ we have $\int_{D}  \tilde{h}dm=\int_{SX/ \Gamma} h dm_{\Gamma} $. Moreover the quantity $\int_{D} \tilde{h} dm$ does not depend on the choice of $D$.\\
We say that $\Gamma$ admits a BMS finite measure if $m_{\Gamma}$ is finite. We denote by $g_{\Gamma}^{t}$ the geodesic flow on $SX/\Gamma$. We say that $g_{\Gamma}^{t}$ is mixing on $SX/\Gamma$ with respect to  $m_{\Gamma}$ if for all bounded Borel subsets $A,B \subset SX/\Gamma$ we have $\lim_{t \rightarrow +\infty} m_{\Gamma}(A\cap g_{\Gamma}^{t}(B))=m_{\Gamma}(A)m_{\Gamma}(B)$. 

The assumption of non-arithmeticity of the spectrum of $\Gamma$ guarantees that the geodesic flow on $SX$ satisfies the mixing property with respect to  BMS measures. We refer to \cite[Proposition 7.7]{Bab} for a proof of this fact in the case of negatively curved manifold. We refer to \cite[Chapitre 3]{Ro} for a general proof in CAT(-1) spaces.  

In \cite[Th\'eor\`eme 4.1.1, Chapitre 4]{Ro}, T. Roblin proves the following theorem based on the mixing property of the geodesic flow on $SX\backslash \Gamma$ with respect to  BMS measures:

 \begin{theorem}\label{roblin}(T. Roblin)
Let $\Gamma$ be a discrete group of isometries of $X$ with a non-arithmetic spectrum. Assume that $\Gamma$ admits a finite BMS measure associated to a $\Gamma$-invariant conformal density $\mu$ of dimension $\alpha=\alpha(\Gamma)$. Then for all $x,y \in X$ we have: $$\alpha e^{-\alpha n}||m_{\Gamma}||\sum_{ \left\{\gamma \in \Gamma|d( x,\gamma  y)< n  \right\}}D_{\gamma^{-1}  x} \otimes D_{\gamma  y} \rightharpoonup \mu_{x} \otimes \mu_{y} $$ as $n\rightarrow +\infty$  with respect to  the weak* convergence of $C(\overline{X} \times \overline{X})^{*}$.
\end{theorem}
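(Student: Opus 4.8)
The plan is to follow Margulis's strategy, as carried out by Roblin: turn the orbital sum into a quantity governed by the geodesic flow on $SX/\Gamma$ and exploit its mixing with respect to $m_\Gamma$, which is available here precisely because the spectrum is non-arithmetic and $m_\Gamma$ is finite. \emph{Step 1 (a priori bounds, reduction to products).} The objects $\nu_n:=\alpha e^{-\alpha n}\|m_\Gamma\|\sum_{d(x,\gamma y)<n}D_{\gamma^{-1}x}\otimes D_{\gamma y}$ are positive Radon measures on the compact space $\overline X\times\overline X$ with total mass $\alpha e^{-\alpha n}\|m_\Gamma\|\,\#\{\gamma\in\Gamma: d(x,\gamma y)<n\}$. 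Sullivan's shadow lemma (Lemma \ref{shadowlem}), together with the fact that, for a fixed $R$, the shadows $\mathcal O_R(x,\gamma y)$ with $\gamma$ ranging over a fixed annulus cover $\partial X$ with multiplicity bounded independently of the annulus, gives $\#\{\gamma:d(x,\gamma y)\in[k,k+1)\}\le Ce^{\alpha k}$, so $\sup_n\|\nu_n\|<\infty$. By the Banach-Alaoglu theorem every subsequence of $(\nu_n)$ has a weak* accumulation point, and since finite sums $\sum_i f_i\otimes g_i$ with $f_i,g_i\in C(\overline X)$ are dense in $C(\overline X\times\overline X)$ (Stone-Weierstrass), it suffices to show that $\nu_n(f\otimes g)\to\bigl(\int_{\partial X}f\,d\mu_x\bigr)\bigl(\int_{\partial X}g\,d\mu_y\bigr)$ for all $f,g\in C(\overline X)$; this singles out the unique accumulation point.

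\emph{Step 2 (transfer to $SX$).} To each $\gamma$ with $s_\gamma:=d(x,\gamma y)<n$ I attach a bi-infinite geodesic $u_\gamma\in SX$ extending the segment $[x,\gamma y]$ (or passing within a bounded distance of $x$ and of $\gamma y$), normalized to be near $x$ at time $0$ and near $\gamma y$ at time $s_\gamma$, and write $v^\pm_\gamma=g_{\pm\infty}(u_\gamma)$. Since $\gamma y$ lies at distance $\approx s_\gamma$ from $x$ essentially along the ray representing $v^+_\gamma$, uniform continuity of $g$ on the compact space $\overline X$ yields $g(\gamma y)=g(v^+_\gamma)+o(1)$, uniformly as $s_\gamma\to\infty$; applying $\gamma^{-1}$, the point $\gamma^{-1}x$ lies at distance $\approx s_\gamma$ from $y$ along the ray representing $g_{-\infty}(\gamma^{-1}u_\gamma)=\gamma^{-1}v^-_\gamma$, so $f(\gamma^{-1}x)=f\bigl(g_{-\infty}(\gamma^{-1}u_\gamma)\bigr)+o(1)$. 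As $g_{-\infty}$ is flow-invariant, $f\bigl(g_{-\infty}(\gamma^{-1}u_\gamma)\bigr)=f\bigl(g_{-\infty}(g_\Gamma^{s_\gamma}\gamma^{-1}u_\gamma)\bigr)$, and the vector $g^{s_\gamma}(\gamma^{-1}u_\gamma)$ is based near $y$. Hence, up to an error that tends to $0$, $\nu_n(f\otimes g)$ becomes a sum over geodesic arcs joining a fixed neighbourhood of $x$ to a $\Gamma$-translate of a fixed neighbourhood of $y$, weighted by the flow-invariant functions $\check f(u)=f(g_{-\infty}u)$ and $\hat g(u)=g(g_{+\infty}u)$ evaluated at the two ends of the arc.

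\emph{Step 3 (boxing and mixing).} I would cover $\partial X\times\partial X$ by small product sets $W\times V$, refining correspondingly two small flow boxes $\Omega_x,\Omega_y\subset SX$ through $x$, resp. $y$, into cells on which $\check f$ and $\hat g$ are nearly constant. Using that $\Gamma$ acts properly discontinuously on $SX$, for each pair of cells the number of $\gamma$ with $s_\gamma\in[k,k+1)$, with $(v^-_\gamma,v^+_\gamma)$ in the prescribed sets, and with the two ends of the arc in the given cells is, up to a multiplicity bounded independently of $k$, comparable to the mass of $g_\Gamma^{k}\overline{\Omega_x}\cap\overline{\Omega_y}$ rescaled by the local size of the boxes; mixing of $(g_\Gamma^t,m_\Gamma)$ forces this quantity to the corresponding product of $m_\Gamma$-masses (normalized by $\|m_\Gamma\|$). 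The shadow lemma accounts for the exponential growth rate $e^{\alpha n}$ — equivalently, this is Roblin's orbital counting asymptotics, itself an output of the same argument — while mixing delivers the precise limiting object through the disintegration $dm=\frac{d\mu_x(v)\,d\mu_x(w)\,ds}{d_x(v,w)^{2\alpha}}$: summing over $k<n$, converting the density based at $x$ in the forward variable into $\mu_y$ by conformality of $\mu$, and letting the mesh of the cover tend to $0$, each cell contributes in the limit its $\mu_x\otimes\mu_y$-mass, which gives $\nu_n(f\otimes g)\to\bigl(\int f\,d\mu_x\bigr)\bigl(\int g\,d\mu_y\bigr)$.

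I expect the decisive difficulty to be Step 3: matching the discrete orbital sum with the continuous dynamical average of a \emph{soft}, purely qualitative mixing statement while keeping every error uniform in $k$ — the bounded-multiplicity bookkeeping of the cover, the boundary effects of the boxes where $\check f\otimes\hat g$ varies, and, above all, the need to let the boxes shrink as $k\to\infty$ so that mixing produces enough quantitative information to upgrade the bare growth order furnished by the shadow lemma to the exact asymptotics. This is the technical core of \cite[Chapitre 4]{Ro}.
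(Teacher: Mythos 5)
This statement is not proved in the paper at all: it is quoted verbatim from Roblin's memoir (\cite[Th\'eor\`eme 4.1.1, Chapitre 4]{Ro}) and used as a black box, so there is no internal argument to compare yours against. Your outline is nevertheless the correct strategy --- it is exactly Margulis's mixing argument as implemented by Roblin: uniform mass bounds via the shadow lemma, reduction to product test functions by Stone--Weierstrass, transfer of the orbital sum to geodesic arcs, and equidistribution via mixing of $(g^t_\Gamma,m_\Gamma)$, with the non-arithmeticity hypothesis entering only to guarantee mixing.

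As a proof, however, the proposal is incomplete, and you say so yourself: Step 3 is where essentially all of the content lies, and you describe what must happen there rather than doing it. Two concrete points deserve attention beyond the bookkeeping. First, in a general proper CAT($-1$) space the segment $[x,\gamma y]$ need not extend to a bi-infinite geodesic (the paper itself remarks that the projection $SX\to X$ need not be surjective), so the assignment $\gamma\mapsto u_\gamma$ in your Step 2 is not well defined as stated; Roblin replaces it by an approximation using geodesics with both endpoints in $\Lambda_\Gamma$, which is also where the fact that $m$ is carried by such geodesics is used. Second, your stated worry about ``letting the boxes shrink as $k\to\infty$'' misdescribes the actual mechanism: one fixes flow boxes of size $\epsilon$, applies the purely qualitative mixing statement for each fixed $\epsilon$ to sandwich $\limsup_n$ and $\liminf_n$ of $\nu_n(f\otimes g)$ between $\epsilon$-dependent quantities, and only then lets $\epsilon\to 0$; no uniformity of mixing in $k$ is needed. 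With those corrections the outline matches Roblin's proof, but the passage from the qualitative mixing input to the exact asymptotics --- including the emergence of the normalizing constant $\alpha\|m_\Gamma\|$ and the conformal conversion producing $\mu_y$ in the second factor --- is precisely the content of \cite[Chapitre 4]{Ro} and remains to be written out.
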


\subsection{Operator-valued measures}\label{section2}

\subsubsection{The space of operator-valued measures as a dual space of a Banach space}
We remind to the reader why the Banach space $\mathcal{L}\big(C(Z),\mathcal{B}(\mathcal{H})\big)$ is naturally isomorphic to the dual of the Banach space $C(\partial X)\widehat{\otimes} \mathcal{H} \widehat{\otimes} \overline{\mathcal{H}}$ where $\widehat{\otimes}$ denotes the projective tensor product:

Let $E$ and $F$ be Banach spaces with norms $\|\cdot\|_{E}$ and $\|\cdot \|_{F}$. We consider the algebraic tensor product $E\otimes_{alg} F$. The projective norm of an element $g$ in $E\otimes_{alg} F$ is defined by $$\|g\|_{p}:=\inf \left\{\sum_{\mbox{\footnotesize finite}} ||e_{i}||_{E} ||f_{i}||_{F},~\mbox{such that } g=\sum_{\mbox{\footnotesize finite}}e_{i} \otimes f_{i} \right\}.$$
The projective tensor product is defined as the completion of the algebraic tensor product for the projective norm $\|\cdot \|_{p}$, and it is denoted by $$E\widehat{\otimes}F:=\overline{E\otimes_{alg} F}^{\|\|_{p}}.$$
Recall also that we have the Banach isomorphism 
\begin{equation}\label{iso1}
\mathcal{L}(E,F^{*}) \rightarrow (E\widehat{\otimes}F)^{*}
\end{equation} 
 given by: 
$$\mathcal{M} \longmapsto \big( e\otimes f \mapsto \mathcal{M}(e)f \big).$$
See \cite[p. 24]{R} for more details.

Let $\langle \cdot, \cdot \rangle$ be the inner product on $\mathcal{H}$ which is antilinear on the second variable. Define for $\xi\in \mathcal{H}$ the map $\xi^{*}\in \mathcal{H}^{*}$ which satisfies $\xi^{*}(\zeta)=\langle \zeta,\xi\rangle$ for $\zeta \in \mathcal{H}$. The canonical isomorphism between a conjugate Hilbert space and its dual is given by:
\begin{align*}
\xi \in\overline{\mathcal{H}}  \mapsto \xi^{*}  \in \mathcal{H}^{*}
\end{align*}
Define the map
\begin{align*}
\xi \otimes \eta^{*} \in \mathcal{H} \otimes \mathcal{H}^{*} \mapsto t_{\xi,\eta} \in \mathcal{B}(\mathcal{H})
\end{align*}
where $$\forall \zeta \in \mathcal{H},t_{\xi,\eta}(\zeta)=\eta^{*}(\zeta)\xi=\langle \zeta,\eta\rangle\xi.$$

Let $Tr$ be the usual semi-finite trace on $\mathcal{B}(\mathcal{H})$ and let $T$ be an operator in $\mathcal{B}(\mathcal{H})$. Notice that for all $\xi$ and $\eta $ in $\mathcal{H}$: 
\begin{equation}\label{tricky}
\langle T\xi,\eta\rangle =Tr(T t_{\xi,\eta}).
\end{equation}
It is well known that we have the isomorphism 
\begin{equation}\label{iso2'}
   \xi \otimes \eta \in \mathcal{H} \widehat{\otimes} \overline{\mathcal{H}} \mapsto t_{\xi,\eta} \in L^{1}(\mathcal{H}),
\end{equation}
where $ L^{1}(\mathcal{H})$ denotes the space of \emph{Trace class} operators.
 
Recall that we have also an isomorphism 
\begin{equation}\label{iso2}
T \in \mathcal{B}(\mathcal{H}) \mapsto  Tr_{T} \in L^{1}(\mathcal{H})^{*},
\end{equation}
where $Tr_{T}(S)=Tr(TS)$ for all $S\in L^{1}(\mathcal{H})$.

Recall that $T_{n}\rightarrow T$ with respect to  the weak operator topology if for all $\xi$ and $\eta$ in $\mathcal{H}$ we have $\langle T_{ n} \xi ,\eta \rangle \rightarrow \langle T \xi ,\eta \rangle$ as $n \rightarrow +\infty$.

\subsubsection*{An explicit isomorphism}
Let $Z$ be a compact space. The space  $\mathcal{L}\big(C(Z),\mathcal{B}(\mathcal{H})\big)$ is a Banach space with the norm $\| \mathcal{M}\|=\sup\{ \| \mathcal{M}(f)\|_{\mathcal{B}(\mathcal{H})} ,  \mbox {with } \| f\|_{\infty}\leq 1 \}$.  
Combining isomorphisms (\ref{iso1}), (\ref{iso2'}), and (\ref{iso2}) with the observation (\ref{tricky}) we obtain that the map $ \mathcal{M}\in\mathcal{L}\big(C(Z),\mathcal{B}(\mathcal{H})\big)\mapsto \widetilde{\mathcal{M}}\in \big(C(Z)\widehat{\otimes} \mathcal{H} \widehat{\otimes} \overline{\mathcal{H}}\big)^{*}$is a Banach isomorphism and satisfies for all 
$(f,\xi,\eta) \in (C(Z)\times \mathcal{H}\times \overline{\mathcal{H}})$: 
\begin{equation}\label{iso3}
\widetilde{\mathcal{M}}(f\otimes \xi \otimes\eta)=Tr(\mathcal{M}(f)t_{\xi,\eta})=\langle \mathcal{M}(f)\xi,\eta\rangle.
\end{equation}
\subsubsection{General construction of Operator-valued measures}
We give in this section a general construction of ``ergodic" operator-valued measures that we are interested in.

\subsubsection*{Quasi-regular representations}
Let $(Y,\mu)$ be a measure space.
Consider an action $\Gamma\curvearrowright (Y,\mu) $ such that $\mu$ is a finite $\Gamma$-quasi-invariant measure (i.e. $\mu$ and $\gamma_{*}\mu$ are in the same measure class). We denote by $$\frac{d\gamma_{*}\mu}{d\mu}(y)$$ the Radon-Nikodym derivative of $\gamma_{*}\mu$ with respect to  $\mu$ at a point $y$, with $\gamma$ in $\Gamma$. 
Let $\mathcal{H}$ be $L^{2}(Y,\mu)$ and for all $\xi\in \mathcal{H}$ and for all $\gamma$ in $\Gamma$ define $\pi$ to be:
$$\big(\pi(\gamma)\xi\big)(y)=\left(\frac{d\gamma_{*}\mu}{d\mu}\right)^{\frac{1}{2}}(y)\xi(\gamma^{-1} y).$$
The representation $\pi:\Gamma \rightarrow \mathcal{U}(\mathcal{H})$ is a unitary representation on the Hilbert space $\mathcal{H}$, and is called a \textit{quasi-regular} representation.
Observe that $\pi$ is a \emph{positive representation} in the sense that $\pi$ preserves the cone of positive functions.

Notice that $\pi$ extends to a representation of the group algebra denoted $\mathbb{C}\Gamma$ by
\begin{align*}
\pi:\sum c_{\gamma}\gamma  \in \mathbb{C}\Gamma \mapsto \sum{c_{\gamma}}\pi(\gamma)\in \mathcal{B}(\mathcal{H}).
\end{align*}
Define also the following matrix coefficient
\begin{align*}
\phi: \gamma \in \Gamma \mapsto \left\langle \pi(\gamma)\textbf{1}_{Y},\textbf{1}_{Y}\right\rangle \in \mathbb{R}^{+},
\end{align*}
where $\textbf{1}_{Y}$ denotes the characteristic function of the measure space $Y$. 
\subsubsection*{An ergodic operator-valued measure}\label{ergodicoperator}
Let $Z$ be a topological space and consider the space of continuous functions on $Z$ denoted by $C(Z)$. Consider a family of linear forms $\left(\ell_{\gamma}\right)_{\gamma \in \Gamma}$ on $C(Z)^{*}$.
Assume that $\Gamma$ acts isometrically on a metric space $(X,d)$. Let $x\in X$ and $\rho>0$. Define for all $n\geq \rho $ the annulus $$C_{n}(x,\rho):=\lbrace  n-\rho \leq d(\gamma x,x) <n+\rho \rbrace .$$ Assume that there exists an integer $N_{x,\rho}$ that for all $n\geq N_{x,\rho}$ the annulus $C_{n}(x,\rho)$ is not empty.
Define the sequence of operator-valued measures $(\mathcal{M}_{x,\rho} ^{n})_{n\geq N_{x,\rho}}$ as: 
\begin{align*}
\mathcal{M}_{x,\rho} ^{n}: f \in C(Z) \mapsto \frac{1}{|C_{n}(x,\rho)|}\sum_{\gamma \in C_{n}(x,\rho)}\ell_{\gamma}(f)\frac{\pi(\gamma)}{\phi(\gamma)} 
\end{align*}
and observe $$\mathcal{M}_{x,\rho} ^{n}(f) =\pi\bigg(\frac{1}{|C_{n}(x,\rho)|}\sum_{\gamma \in C_{n}(x,\rho)}\ell_{\gamma}(f)\frac{\gamma}{\phi(\gamma)} \bigg)\in  \mathcal{B}( \mathcal{H} ).$$
\subsubsection*{Properties}

Let $T$ be a bounded operator on a Hilbert space and $T^{*}$ is its adjoint.
Let $\textbf{1}_{Z}$ and $\textbf{1}_{Y}$ be the constant functions which are equal to $1$ on $Z$ and on $Y$. The Banach space $L^{\infty}(Y)$ is a Banach space with its usual norm $\| \cdot \|_{\infty}$. We denote by $\mathcal{L}(L^{\infty}(Y),L^{\infty}(Y))$ the Banach space of operators from $L^{\infty}(Y)$ to itself with the norm $ \|\cdot \|_{\mathcal{L}(L^{\infty}(Y),L^{\infty}(Y))}$.

We state some fundamental properties of the sequence $(\mathcal{M}^{n}_{x,\rho})_{n\geq N_{x,\rho}}$.

\begin{prop}\label{prop123}
Let $n$ be in a non-negative integer. Assume that $\ell_{\gamma}$ are positive linera forms (i.e. $f\geq0$ implies $\ell_{\gamma}(f)\geq0$ and for all $\gamma \in \Gamma$). We have:
\begin{enumerate}
  \item For all $f\in C(Z)$, we have $$(\mathcal{M}^{n}_{x,\rho}(f))^{*}=\left(\frac{1}{|C_{n}(x,\rho)|}\sum_{\gamma \in C_{n}(x,\rho)}\ell_{\gamma}(f)\frac{\rho(\gamma)}{\phi(\gamma)}\right)^{*}=\frac{1}{|C_{n}(x,\rho)|}\sum_{\gamma \in C_{n}(x,\rho)}\overline{\ell_{\gamma^{-1}}(f)}\frac{\rho(\gamma)}{\phi(\gamma)}.$$
	\item   $\| \mathcal{M}^{n}_{x,\rho}\|_{\mathcal{L}\big(C(Z),\mathcal{B}(\mathcal{H})\big)}\leq \|\mathcal{M}^{n}_{x,\rho}(\textbf{1}_{Z})\|_{\mathcal{B}(\mathcal{H})}$.
	\item $\|\mathcal{M}^{n}_{x,\rho}(\textbf{1}_{Z})\|_{\mathcal{L}(L^{\infty}(Y),L^{\infty}(Y))}\leq\|\mathcal{M}^{n}_{x,\rho}(\textbf{1}_{Z})\textbf{1}_{Y}\|_{\infty}$. 
\end{enumerate}
\end{prop}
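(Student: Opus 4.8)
\textbf{Proof proposal for Proposition \ref{prop123}.}

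The plan is to treat the three statements separately, in order, since each reduces to an elementary computation once the right reformulation is set up. For item (1), I would start from the expression $\mathcal{M}^{n}_{x,\rho}(f) = \frac{1}{|C_{n}(x,\rho)|}\sum_{\gamma \in C_{n}(x,\rho)}\ell_{\gamma}(f)\frac{\pi(\gamma)}{\phi(\gamma)}$, take adjoints term by term using $\pi(\gamma)^{*} = \pi(\gamma^{-1})$ (unitarity) and $\overline{\ell_\gamma(f)}$ for the scalar, then re-index the sum by $\gamma \mapsto \gamma^{-1}$. The key point making the re-indexing legitimate is that $d(\gamma x, x) = d(x, \gamma^{-1}x) = d(\gamma^{-1}x, x)$, so $\gamma \in C_{n}(x,\rho) \iff \gamma^{-1} \in C_{n}(x,\rho)$, i.e. the annulus is symmetric under inversion; one also needs $\phi(\gamma) = \phi(\gamma^{-1})$, which follows because $\phi$ is a diagonal matrix coefficient of a unitary representation in a real-valued (positive) vector, hence $\phi(\gamma^{-1}) = \langle \pi(\gamma^{-1})\mathbf{1}_Y,\mathbf{1}_Y\rangle = \langle \mathbf{1}_Y,\pi(\gamma)\mathbf{1}_Y\rangle = \overline{\phi(\gamma)} = \phi(\gamma)$. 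After the substitution the sum becomes $\frac{1}{|C_{n}(x,\rho)|}\sum_{\gamma}\overline{\ell_{\gamma^{-1}}(f)}\frac{\pi(\gamma)}{\phi(\gamma)}$, which is the claimed identity.

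For item (2), I would use that $\pi$ is a positive representation and that $\frac{\pi(\gamma)}{\phi(\gamma)}$ applied to $\mathbf{1}_Y$ has a controlled size. The operator-valued measure norm is $\sup_{\|f\|_\infty \le 1}\|\mathcal{M}^{n}_{x,\rho}(f)\|_{\mathcal{B}(\mathcal{H})}$. Given $f$ with $\|f\|_\infty \le 1$, decompose $f = f_+ - f_- + i(g_+ - g_-)$ into positive parts, each bounded by $\mathbf{1}_Z$; positivity of the $\ell_\gamma$ and of $\pi(\gamma)$ then gives, for positive $\xi$, the pointwise/operator domination $\pm\,\mathrm{Re}\,\mathcal{M}^{n}_{x,\rho}(f) \preceq \mathcal{M}^{n}_{x,\rho}(\mathbf{1}_Z)$ and similarly for the imaginary part, in the sense that each entry is dominated. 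The cleanest route is: for a positive operator $A$ built as a positive combination of the $\pi(\gamma)/\phi(\gamma)$, one has $\|\mathcal{M}^{n}_{x,\rho}(f)\| \le \|\,|f|\text{-version}\,\| \le \|\mathcal{M}^{n}_{x,\rho}(\mathbf{1}_Z)\|$ because $|\ell_\gamma(f)| \le \ell_\gamma(\mathbf{1}_Z)$ and a positive linear combination of positive operators with larger coefficients has larger norm. I would phrase this via the standard fact that for positive operators $0 \preceq B \preceq A$ in $\mathcal{B}(\mathcal{H})$ one has $\|B\| \le \|A\|$, applied after the observation that $\mathrm{Re}(e^{i\theta}\mathcal{M}^{n}_{x,\rho}(f))$ is dominated by $\mathcal{M}^{n}_{x,\rho}(\mathbf{1}_Z)$ for every phase $\theta$.

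For item (3), the operator $\mathcal{M}^{n}_{x,\rho}(\mathbf{1}_Z)$ is a positive operator on $L^2(Y,\mu)$ given by a positive combination of the quasi-regular unitaries, hence it maps positive functions to positive functions and acts on $L^\infty(Y)$; since it is a positivity-preserving operator, its $L^\infty \to L^\infty$ operator norm is attained on the constant function $\mathbf{1}_Y$ — this is the classical fact that a positive operator $P$ on an $L^\infty$ space satisfies $\|P\|_{L^\infty \to L^\infty} = \|P\mathbf{1}_Y\|_\infty$, because $|Pf| \le P|f| \le \|f\|_\infty\, P\mathbf{1}_Y$ pointwise. I expect item (2) to be the main obstacle, since it requires being careful about the order structure: one must verify that the relevant operators are genuinely positive (symmetric plus positivity-preserving) so that $\|B\| \le \|A\|$ for $0 \preceq B \preceq A$ can be invoked, rather than a mere entrywise domination which would not by itself control operator norms. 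The resolution is to combine item (1) — which shows $\mathcal{M}^{n}_{x,\rho}(\mathbf{1}_Z)$ is self-adjoint — with the positivity of $\pi$ and of the $\ell_\gamma$, reducing everything to the scalar inequality $|\ell_\gamma(f)| \le \ell_\gamma(\mathbf{1}_Z)\|f\|_\infty$ inside a fixed positive operator combination.
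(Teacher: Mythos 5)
Items (1) and (3) of your proposal are correct, and since the paper itself only says ``the proofs are easy and left to the reader'' there is no written argument to compare against: your re-indexing $\gamma\mapsto\gamma^{-1}$ in (1), justified by the inversion-symmetry of $C_{n}(x,\rho)$ (from $d(\gamma x,x)=d(\gamma^{-1}x,x)$) and by $\phi(\gamma^{-1})=\overline{\phi(\gamma)}=\phi(\gamma)$, is exactly what is needed, and your proof of (3) via $|Pf|\leq P|f|\leq \|f\|_{\infty}P\textbf{1}_{Y}$ for a positivity-preserving operator is the standard one.

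Item (2), however, contains a genuine gap, and it is precisely at the point you flagged as the ``resolution.'' The fact $0\preceq B\preceq A\Rightarrow\|B\|\leq\|A\|$ concerns the Loewner order on $\mathcal{B}(\mathcal{H})$, i.e.\ positive \emph{semi-definite} operators, and the operators here are not of that kind: $\pi(\gamma)$ is a unitary that preserves the cone of positive functions in $L^{2}(Y,\mu)$, and a non-negative combination such as $\mathcal{M}^{n}_{x,\rho}(\textbf{1}_{Z})$ is positivity-preserving and (for inversion-symmetric data) self-adjoint, but generally \emph{not} positive semi-definite --- already $\pi(\gamma)+\pi(\gamma^{-1})$ has spectrum reaching into $[-2,0)$. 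So neither $\mathcal{M}^{n}_{x,\rho}(\textbf{1}_{Z})\succeq 0$ nor $\mathrm{Re}\big(e^{i\theta}\mathcal{M}^{n}_{x,\rho}(f)\big)\preceq\mathcal{M}^{n}_{x,\rho}(\textbf{1}_{Z})$ is available, and the Loewner-order route collapses. The mechanism that does work is exactly the ``entrywise domination'' you dismissed as insufficient, namely the lattice order of $L^{2}(Y,\mu)$: for any $\xi$, positivity of $\pi(\gamma)$ gives $|\pi(\gamma)\xi|\leq\pi(\gamma)|\xi|$ pointwise, hence, using $|\ell_{\gamma}(f)|\leq\ell_{\gamma}(\textbf{1}_{Z})\|f\|_{\infty}$, one gets $|\mathcal{M}^{n}_{x,\rho}(f)\xi|\leq\mathcal{M}^{n}_{x,\rho}(\textbf{1}_{Z})|\xi|$ $\mu$-almost everywhere for $\|f\|_{\infty}\leq 1$, and therefore $\|\mathcal{M}^{n}_{x,\rho}(f)\xi\|_{2}\leq\|\mathcal{M}^{n}_{x,\rho}(\textbf{1}_{Z})|\xi|\|_{2}\leq\|\mathcal{M}^{n}_{x,\rho}(\textbf{1}_{Z})\|_{\mathcal{B}(\mathcal{H})}\|\xi\|_{2}$, which is the claim with the correct constant $1$. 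Note also that your fallback decomposition $f=f_{+}-f_{-}+i(g_{+}-g_{-})$ followed by the triangle inequality would only yield the bound with a factor $4$, which is not what the proposition asserts; the direct scalar inequality $|\ell_{\gamma}(f)|\leq\ell_{\gamma}(\textbf{1}_{Z})$ inserted inside the pointwise estimate is what avoids any loss.
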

The proofs are easy and left to the reader.
\section{Uniform boundedness}\label{section3}
In this section a point $x$ in $X$ is fixed.
\subsection{Useful functions}
Let $\mu$ be a $\Gamma$-invariant conformal density of dimension $\alpha$ and let $L^{\infty}(\mu)$ be the Banach space of essentially bounded functions endowed with its usual norm denoted by $\| \cdot \|_{\infty}$.
Let $\rho>0$ and assume that there exists $N_{x,\rho}$ such that $|C_{n}(x,\rho)|>0$ for all $n\geq N_{x,\rho}$.
Consider the sequence of \emph{positive functions} $F_{x,\rho}^{n}$ defined for all $n\geq N_{x,\rho}$ as: 

\begin{equation}\label{Fn}
F_{x,\rho}^{n}:v \in \partial X \mapsto \frac{1}{|C_{n}(x,\rho)|} \sum_{\gamma \in C_{n}(x,\rho)}\frac{ \exp{ \left(  \frac{\alpha}{2} \beta_{v}(x,\gamma  x)  \right) } }{\phi_{x}(\gamma)} \in \mathbb{R}^{+},
\end{equation}
where $\phi_{x}=\langle \pi_{x}(\gamma)\textbf{1}_{\partial X},\textbf{1}_{\partial X}\rangle$ is the Harish-Chandra function defined in the introduction.
Observe that $F_{x,\rho}^{n}$ is nothing but
\begin{equation}\label{normedeM}
F_{x,\rho}^{n}=\mathcal{M}_{x,\rho}^{n}(\textbf{1}_{\overline{X}})\textbf{1}_{\partial X}.
\end{equation}

Consider also the sequence of positive functions $H^{n}_{x,\rho}$ defined for all $n\geq N_{x,\rho}$ as:  

\begin{equation}\label{Hn}
H^{n}_{x,\rho}: v \in \partial X \mapsto \frac{1}{|C_{n}(x,\rho)|} \sum_{\gamma \in C_{n}(x,\rho)} \frac{ \exp{ \left(  \alpha \beta_{v}(x,\gamma  x)  \right) }}{\| \mu_{\gamma x} \|}\in \mathbb{R}^{+}. 
\end{equation}

We shall prove that $F_{x,\rho}^{n}$ and $H^{n}_{x,\rho}$ are uniformly bounded in the $L^{\infty }(\mu)$ norm. The fact that $F_{x,\rho}^{n}$ is uniformly bounded is the first step in the proof of Theorem A. 

The proof of uniform boundedness for $(F_{x,,\rho}^{n})_{n\geq N_{x,\rho}}$ consists in two parts: we shall obtain sharp estimates of Busemann functions on shadows, then use Ahlfors regularity condition to estimate the Harish-Chandra function $\phi_{x}$. 

The method of the proof of the uniform boundedness of $(F_{x,,\rho}^{n})_{n\geq N_{x,\rho}}$ applies for showing the uniform boundedness of $(H_{x,\rho}^{n})_{n\geq N_{x,\rho}}$ with suitable hypothesis.

 \subsection{Estimates for Busemann functions}
These techniques using the hyperbolic inequality (\ref{hyperbolic}) extended to the whole space $\overline{X}$ are very powerful. See for example  \cite{CM} and \cite{BM} where these techniques are used. 
\begin{lemma}\label{encadrement buseman dur}
Let $R>0$ and let $v\in \partial X$. We have for all $y\in X$ and for all $w$ in $\mathcal{O}_{R}(x,y)$: 
$$\min \lbrace (w,v)_{x},d(x,y)\rbrace-R-\delta  \leq  (v,y)_{x}\leq (v,w)_{x}+R+\delta.$$
\end{lemma}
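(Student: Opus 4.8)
The plan is to prove the two inequalities separately, in each case by passing to the limit along a geodesic ray representing $v$ and using the $\delta$-hyperbolic inequality \eqref{hyperbolic} for the Gromov product, extended to $\overline{X}$. Fix a geodesic ray $r$ with $r(+\infty)=v$, and for $t$ large set $v_t=r(t)\in X$; since $(v,y)_x=\lim_{t\to\infty}(v_t,y)_x$, it suffices to get the bounds for $(v_t,y)_x$ with constants independent of $t$ and then let $t\to\infty$. The point $w$ lies in the shadow $\mathcal{O}_R(x,y)$, which by definition means the geodesic ray from $x$ to $w$ passes within distance $R$ of $y$; equivalently, up to the additive constant $R$, the Gromov product $(w,y)_x$ is comparable to $d(x,y)$. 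More precisely, the standard shadow estimate gives $d(x,y)-2R\le (w,y)_x\le d(x,y)$ (the right inequality being \eqref{Bus3}-type/triangle inequality, the left one the defining property of the shadow), so I will record that first as a preliminary observation, absorbing the $2R$ into the $R+\delta$ slack as needed — or, more cleanly, just keep $(w,y)_x$ abstract and feed it into the hyperbolic inequality.

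For the \textbf{upper bound} $(v,y)_x\le (v,w)_x+R+\delta$: apply \eqref{hyperbolic} to the triple $v_t, w, y$ (all in $\overline X$) based at $x$, giving $(v_t,y)_x\ge \min\{(v_t,w)_x,(w,y)_x\}-\delta$. This is the wrong direction, so instead I apply it to recover $(v_t,y)_x$ from above by a different triple: use $(v_t,w)_x\ge\min\{(v_t,y)_x,(y,w)_x\}-\delta$. If the minimum is $(v_t,y)_x$, then $(v_t,y)_x\le (v_t,w)_x+\delta\le (v,w)_x+\delta$ (plus the error from $v_t\to v$, which vanishes). If the minimum is $(y,w)_x$, then $(v_t,y)_x$ could in principle be large, but then $(y,w)_x\le (v_t,w)_x+\delta$ and since $(y,w)_x\ge d(x,y)-2R$ while $(v_t,y)_x\le d(x,y)$ always (triangle inequality), we get $(v_t,y)_x\le d(x,y)\le (y,w)_x+2R\le (v_t,w)_x+2R+\delta\le (v,w)_x+2R+\delta+o(1)$. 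Either way, letting $t\to\infty$ yields $(v,y)_x\le (v,w)_x+R+\delta$ after a bookkeeping of the constants (one should choose the statement's $R$ to absorb the factor $2R$; I will present it so the arithmetic closes, possibly replacing $R$ by a harmless multiple throughout, which is legitimate since $R$ is "large enough" / arbitrary in applications).

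For the \textbf{lower bound} $(v,y)_x\ge \min\{(w,v)_x,d(x,y)\}-R-\delta$: apply \eqref{hyperbolic} to the triple $w, v_t, y$ based at $x$: $(w,y)_x\ge\min\{(w,v_t)_x,(v_t,y)_x\}-\delta$, which again is not immediately what I want. Better: use $(v_t,y)_x\ge \min\{(v_t,w)_x,(w,y)_x\}-\delta$ directly. Now $(w,y)_x\ge d(x,y)-2R$ from the shadow, and $(v_t,w)_x\to (v,w)_x$, so $(v_t,y)_x\ge\min\{(v,w)_x,\ d(x,y)-2R\}-\delta+o(1)$; letting $t\to\infty$ and rewriting $d(x,y)-2R$ as part of the $-R-\delta$ slack (again adjusting the constant $R$) gives the claim.

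The main obstacle is the careful accounting of the additive constants: the shadow contributes an $R$ (really a $2R$), each use of \eqref{hyperbolic} contributes a $\delta$, and one must be sure that the case distinction in the upper-bound argument never forces a larger constant than $R+\delta$ as stated. Since $R$ is taken "large enough" in all downstream applications (e.g. in the Shadow Lemma \ref{shadowlem}), the honest move is to prove the inequalities with $R$ replaced by a fixed multiple $cR$ and $\delta$ by $c\delta$, note this is harmless, and state the lemma with the clean constants; alternatively, absorb everything into a single constant. I expect the limiting argument ($v_t\to v$, continuity of Gromov products on $\overline X$ as recorded in the preliminaries) to be routine given the material already developed in Section~\ref{CATspaces}.
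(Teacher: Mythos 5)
Your overall strategy is the same as the paper's: feed the shadow estimate into the extended hyperbolic inequality \eqref{hyperbolic} for the two orderings of the triple $\{v,w,y\}$. But as written your argument only yields the lemma with $2R+\delta$ in place of $R+\delta$, and your plan to ``absorb'' this by replacing $R$ with a multiple changes the statement rather than proving it. The observation you are missing is small but it is exactly what makes the constants close: the shadow gives $d(x,y)-2R\leq\beta_{w}(x,y)\leq d(x,y)$, and converting this through formula \eqref{buseman}, $\beta_{w}(x,y)=2(w,y)_{x}-d(x,y)$, the factor $2$ halves the error, so that
\begin{equation*}
d(x,y)-R\;\leq\;(w,y)_{x}\;\leq\;d(x,y),
\end{equation*}
with $R$ and not $2R$. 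You instead quoted the weaker bound $(w,y)_{x}\geq d(x,y)-2R$ directly, which is where the loss occurs in your lower bound.

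With the sharp form, both halves are immediate and your case analysis in the upper bound disappears. For the lower bound: $(v,y)_{x}\geq\min\{(v,w)_{x},(w,y)_{x}\}-\delta\geq\min\{(v,w)_{x},d(x,y)\}-R-\delta$. For the upper bound: $(v,w)_{x}\geq\min\{(v,y)_{x},(y,w)_{x}\}-\delta$, and since $(y,w)_{x}\geq d(x,y)-R\geq(v,y)_{x}-R$ (using $(v,y)_{x}\leq d(x,y)$), the minimum is at least $(v,y)_{x}-R$, so $(v,y)_{x}\leq(v,w)_{x}+R+\delta$ with no branching. Finally, your approximation of $v$ by points $v_{t}=r(t)$ and the limit $t\to\infty$ is harmless but unnecessary: inequality \eqref{hyperbolic} is already stated in the paper for all four points in $\overline{X}$, so you may apply it to $v\in\partial X$ directly.
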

\begin{proof}
Recall that $d(x,y)-2R\leq \beta_{w}(x,y)\leq d(x,y)$ for all $w\in \mathcal{O}_{R}(x,y)$. Hence, by equation (\ref{buseman}), we have
\begin{equation}\label{encadre}
d(x,y)-R\leq (w,y)_{x}\leq d(x,y).
\end{equation}
On one hand,  using first the hyperbolic property (\ref{hyperbolic}), then the observation (\ref{encadre})  we have 
\begin{align*}
(v,y)_{x}&\geq \min \lbrace(v,w)_{x},(w,y)_{x}\rbrace-\delta \\
& \geq \min \lbrace (w,v)_{x},d(x,y)\rbrace-R-\delta. 
\end{align*}

On the other hand, using $(v,y)_{x}\leq d(x,y)$ we have
\begin{align*}
(v,w)_{x}&\geq \min \lbrace(v,y)_{x},(y,w)_{x}\rbrace-\delta \\
& \geq \min \lbrace(v,y)_{x},d(x,y)-R\rbrace-\delta \\
& \geq (v,y)_{x}-R-\delta. 
\end{align*}
\end{proof}

\begin{prop}\label{maj buseman}
Let $R>0$ and let $n$ be a non-negative integer such that $n\geq \rho$ and let $v \in \partial X$.  There exists $q_{v}$ in $X$ satisfying $d(x,q_{v})=n+\rho$, such that for all $y$ in $X$ with $n-\rho\leq d(x,y)< n+\rho $ and for all $w$ in $\mathcal{O}_{R}(x,y)$ we have $$\beta_{v}(x,y)\leq \beta_{w}(x,q_{v})+2(R+\rho)+4\delta.$$
\end{prop}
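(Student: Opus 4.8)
The plan is to establish the bound $\beta_v(x,y) \le \beta_w(x,q_v) + 2(R+\rho) + 4\delta$ by introducing an auxiliary point $q_v$ on a ray toward $v$ and converting everything into Gromov products via the identity \eqref{buseman}, namely $\beta_v(x,y) = 2(v,y)_x - d(x,y)$. First I would fix a geodesic ray $r$ issued from $x$ with $r(+\infty) = v$ and set $q_v := r(n+\rho)$, so that $d(x,q_v) = n+\rho$ and, crucially, $(v,q_v)_x = d(x,q_v) = n+\rho$ since $q_v$ lies on the ray defining $v$. The point of choosing $q_v$ at radius exactly $n+\rho$ (rather than at radius $d(x,y)$) is that it is a single point working uniformly for all $y$ in the annulus $n-\rho \le d(x,y) < n+\rho$.

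Next I would apply Lemma \ref{encadrement buseman dur} to get $(v,y)_x \le (v,w)_x + R + \delta$ for $w \in \mathcal{O}_R(x,y)$. Then I need to relate $(v,w)_x$ to $(q_v, w)_x$: since $w$ and $v$ are "close" in the sense that the ray to $v$ passes near $y$ and $w$ is in the shadow $\mathcal{O}_R(x,y)$, while $q_v$ is the point at radius $n+\rho$ on the $v$-ray, I would use the hyperbolic inequality \eqref{hyperbolic} with the four points $w, q_v, v$ relative to $x$: $(w, q_v)_x \ge \min\{(w,v)_x, (v, q_v)_x\} - \delta$. Combined with $(v,q_v)_x = n+\rho$ and the fact that $(w,v)_x$ is what appears in the previous step, this should let me trade $(v,w)_x$ for $(w, q_v)_x$ up to controlled error. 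Then converting back: $\beta_w(x,q_v) = 2(w,q_v)_x - d(x,q_v) = 2(w,q_v)_x - (n+\rho)$, while $\beta_v(x,y) = 2(v,y)_x - d(x,y) \le 2(v,y)_x - (n-\rho)$.

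Assembling the chain: $\beta_v(x,y) \le 2(v,y)_x - (n-\rho) \le 2(v,w)_x + 2R + 2\delta - (n-\rho)$, and then I would bound $(v,w)_x$ in terms of $(w,q_v)_x$. The bookkeeping of constants is the delicate part — I expect the main obstacle to be arranging the applications of the $\delta$-inequality so that exactly $2(R+\rho) + 4\delta$ comes out, without leaking extra factors. The key observation making the constants work is that $(v,q_v)_x = n+\rho \ge d(x,y)$, so in any $\min$ appearing from \eqref{hyperbolic} the term $(v,q_v)_x$ is never the binding one as long as the other Gromov products involved are at most of size $\sim d(x,y)$; this is guaranteed because $(v,w)_x$ and $(v,y)_x$ are both bounded above by $d(x,y) + O(R)$ via Lemma \ref{encadrement buseman dur} and $(w,y)_x \le d(x,y)$. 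So the plan is: (i) set $q_v = r(n+\rho)$; (ii) apply Lemma \ref{encadrement buseman dur} to bound $(v,y)_x$ by $(v,w)_x + R + \delta$; (iii) apply \eqref{hyperbolic} to $w, v, q_v$ to bound $(v,w)_x$ by $(w,q_v)_x + \delta$ (using that $(v,q_v)_x$ dominates); (iv) convert back with \eqref{buseman} and collect the additive errors from $d(x,y) \ge n - \rho$ and $d(x,q_v) = n+\rho$, yielding the claimed constant $2(R+\rho) + 4\delta$.
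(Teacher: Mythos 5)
Your overall route is the paper's route: the same choice of $q_{v}$ on the ray from $x$ to $v$ at distance $n+\rho$, the same use of Lemma \ref{encadrement buseman dur} and of \eqref{hyperbolic}, and the same bookkeeping via \eqref{buseman}, which does produce exactly $2(R+\rho)+4\delta$. But step (iii) as you state it — ``bound $(v,w)_x$ by $(w,q_v)_x+\delta$, using that $(v,q_v)_x$ dominates'' — is false in general, and the justification you give for the domination is not available. Lemma \ref{encadrement buseman dur} gives a \emph{lower} bound on $(v,w)_x$ (namely $(v,w)_x\geq (v,y)_x-R-\delta$), not an upper bound; its left-hand inequality only controls $\min\lbrace (v,w)_x,d(x,y)\rbrace$, which tells you nothing about $(v,w)_x$ once that quantity exceeds $d(x,y)$. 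And it can: $v$ is an arbitrary point of $\partial X$, so it may lie in (or visually very close to) the shadow $\mathcal{O}_R(x,y)$, in which case one can take $w$ with $d_x(v,w)$ as small as you like and $(v,w)_x$ arbitrarily large, while $(w,q_v)_x\leq d(x,q_v)=n+\rho$ always. In that regime the inequality $(v,w)_x\leq (w,q_v)_x+\delta$ fails outright.

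The gap is localized and easily repaired, because in the offending case the desired conclusion is even easier: if $(v,w)_x> (v,q_v)_x=n+\rho$, then \eqref{hyperbolic} gives $(w,q_v)_x\geq (v,q_v)_x-\delta=n+\rho-\delta> d(x,y)-\delta\geq (v,y)_x-\delta$, so $(v,y)_x\leq (w,q_v)_x+R+2\delta$ holds anyway. The paper sidesteps the case split entirely by never discarding the minimum: from $(v,y)_x\leq \min\lbrace (v,w)_x,d(x,y)\rbrace+R+\delta$ and $d(x,y)<n+\rho=d(x,q_v)=(v,q_v)_x$ one gets
$$(v,y)_x\leq \min\lbrace (v,w)_x,(v,q_v)_x\rbrace+R+\delta\leq (w,q_v)_x+R+2\delta,$$
where the last step is \eqref{hyperbolic} applied to $w,v,q_v$; this single chain covers both cases. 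With that correction your conversion in step (iv) goes through verbatim and yields the stated constant.
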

\begin{proof}

Define $q_{v}$ as the point on the unique geodesic passing through $v$ and $x$ such that $d(x,q_{v})=n+\rho$.

 Since $(v, y)_{x} \leq d(x,y) $, the right hand side inequality of Lemma \ref{encadrement buseman dur}, the definition of $q_{v}$ combined with the hyperbolic inequality (\ref{hyperbolic}) imply for all $w$ in $\mathcal{O}_{R}(x,y)$  that
\begin{align*}
(v,y)_{x}&\leq \min \lbrace(v,w)_{x},d(x,y)\rbrace+R+\delta \\
& \leq \min \lbrace (v,w)_{x},d(x,q_{v})\rbrace+R+\delta \\
&= \min \lbrace (v,w)_{x},(v,q_{v})_{x}\rbrace+R+\delta\\
& \leq (w,q_{v})_{x}+R+2\delta.
\end{align*}

Since $y$ satisfies $n-\rho\leq d(x,y)< n+\rho $ and $d(x,q_{v})=n+\rho$ the above inequality implies
\begin{align*}
\beta_{v}(x,y)\leq \beta_{w}(x,q_{v})+2(\rho+R)+4\delta.
\end{align*} 
\end{proof}

\subsection{  Ahlfors regularity and Harish-Chandra functions}\label{HarishA}

Let $\Gamma$ be a discrete group of isometries of a CAT(-1) space $X$ and let $\mu$ be a $\Gamma$-invariant conformal density of dimension $\alpha$.
Fix a point $x$ in $X$ and define the function  
\begin{equation}\label{HCHcontinue}
\varphi_{x}:y\in X \mapsto \int_{\partial X}\exp \bigg( \frac{\alpha}{2} \beta_{v}(x,y)\bigg)d\mu_{x}(v).
\end{equation}
Observe that $\phi_{x}$ is the restriction of $\varphi_{x}$ to the orbit $\Gamma x$.\\
Let $\mathcal{Y}$ be a subset of $X$. We say that $\varphi_{x}$ satisfies the \emph{Harish-Chandra estimates on $\mathcal{Y}$} if there exist two polynomials $Q_{1}$ and $Q_{2}$ of \emph{degree one} such that for all $y\in \mathcal{Y}$ we have $Q_{1}(d(x,y))>0$ and 
\begin{equation}\label{HCHestim}
Q_{1}\big( d(x,y)\big) \exp{\bigg(- \frac{\alpha}{2}d(x,y) \bigg)}\leq \varphi_{x}(y) \leq Q_{2}\big( d(x,y)\big) \exp{\bigg( -\frac{\alpha}{2}d(x,y) \bigg)}.
\end{equation}
\\
 Let $R>0$ and such that for all $x$ and $y$ in $X$ the shadows $\mathcal{O}_{R}(x,y)$ are not empty. Pick a point $w_{x}^{y}$  in $\mathcal{O}_{R}(x,y)$. In the context of negatively curved manifolds, we can think about $w_{x}^{y}$ as the ending point of the geodesic passing through $x$ and $y$, oriented from $x$ to $y$. 

\begin{lemma}\label{ineghch}
Let $v \in \partial X$ and $y\in X$. Let $w_{x}^{y}$ be a point in $\mathcal{O}_{R}(x,y)$. Then, we have $$  \exp{\left(  \frac{\alpha}{2}\beta_{v}(x,y) \right) }\leq \exp{\big(\alpha (\delta+R)\big)}\exp{\bigg(-\frac{\alpha}{2} d(x,y)\bigg)}\frac{1}{d_{x}^{\alpha}\big(v,w_{x}^{y}\big)}, $$ and 
$$\exp{\left(  \frac{\alpha}{2}\beta_{v}(x,y) \right) } \geq \exp{\big(-\alpha (\delta+R)\big)} \exp\bigg(-\frac{\alpha}{2} d(x,y)\bigg)\bigg( \min \bigg\{ \frac{1}{d_{x}(v,w_{x}^{y})^{\alpha}},\exp{\big(\alpha d(x,y)\big)} \bigg \}\bigg).$$
\end{lemma}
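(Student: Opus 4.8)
The goal is to translate the horospherical-distance quantity $\exp\bigl(\tfrac{\alpha}{2}\beta_v(x,y)\bigr)$ into something governed by the visual distance $d_x(v,w_x^y)$, using only the basic CAT($-1$) identities collected above: formula (\ref{buseman}) relating $\beta_v$ and the Gromov product, formula (\ref{distance}) defining $d_x$, and the hyperbolic inequality (\ref{hyperbolic}). The plan is to prove the two inequalities by bounding $(v,y)_x$ above and below in terms of $(v,w_x^y)_x$, then exponentiating.

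\medskip

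\textbf{Upper bound.} First I would apply the right-hand inequality of Lemma \ref{encadrement buseman dur} with $w=w_x^y\in\mathcal O_R(x,y)$, giving $(v,y)_x\le (v,w_x^y)_x+R+\delta$. Rewriting via (\ref{buseman}), $\beta_v(x,y)=2(v,y)_x-d(x,y)\le 2(v,w_x^y)_x-d(x,y)+2(R+\delta)$; multiplying by $\tfrac{\alpha}{2}$ and exponentiating, and using $d_x(v,w_x^y)=\exp(-(v,w_x^y)_x)$ so that $\exp(\alpha(v,w_x^y)_x)=d_x(v,w_x^y)^{-\alpha}$, yields exactly
\[
   \exp\!\left(\tfrac{\alpha}{2}\beta_v(x,y)\right)\le \exp(\alpha(\delta+R))\exp\!\left(-\tfrac{\alpha}{2}d(x,y)\right)\frac{1}{d_x(v,w_x^y)^{\alpha}}.
\]

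\medskip

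\textbf{Lower bound.} Here I would use the left-hand inequality of Lemma \ref{encadrement buseman dur}: $(v,y)_x\ge \min\{(v,w_x^y)_x,\,d(x,y)\}-R-\delta$. Feeding this into (\ref{buseman}) gives $\beta_v(x,y)\ge 2\min\{(v,w_x^y)_x,\,d(x,y)\}-d(x,y)-2(R+\delta)$. Now distribute the exponential over the minimum (which commutes with the monotone map $t\mapsto e^{\alpha t}$): $\exp(\alpha\min\{(v,w_x^y)_x,d(x,y)\})=\min\{d_x(v,w_x^y)^{-\alpha},\exp(\alpha d(x,y))\}$. Collecting the remaining factor $\exp(-\tfrac{\alpha}{2}d(x,y))$ and the error term $\exp(-\alpha(R+\delta))$ gives precisely the stated lower bound.

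\medskip

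\textbf{Expected obstacle.} There is essentially no deep obstacle: the statement is a bookkeeping consequence of Lemma \ref{encadrement buseman dur} together with (\ref{buseman}) and (\ref{distance}). The only point requiring a little care is the manipulation of the $\min$ inside the exponential in the lower bound — one must note that $t\mapsto \exp(\alpha t)$ is increasing so it commutes with $\min$, and keep track of which of $2(v,w_x^y)_x-d(x,y)$ versus $d(x,y)$ the two branches produce after subtracting $d(x,y)$; this is where the term $\exp(\alpha d(x,y))$ in the second branch comes from. I would also remark at the outset that $R$ is chosen large enough (as in Lemma \ref{shadowlem}) that all shadows $\mathcal O_R(x,y)$ are non-empty, so that $w_x^y$ exists.
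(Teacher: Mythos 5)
Your proposal is correct and follows exactly the paper's route: the upper bound comes from the right-hand inequality of Lemma \ref{encadrement buseman dur} combined with (\ref{buseman}) and the definition (\ref{distance}) of the visual metric, and the lower bound comes from the left-hand inequality, with the exponential commuting with the minimum. The paper leaves the lower bound as a one-line remark, so your explicit handling of the $\min$ is simply a fuller write-up of the same argument.
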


\begin{proof}
We prove the first inequality.
 The right hand side inequality of Lemma \ref{encadrement buseman dur} leads to
\begin{align*}
(v,y)_{x}\leq (v,w_{x}^{y})_{x}+R+\delta.
\end{align*}

Combining this inequality with equation (\ref{buseman}), we have 
\begin{align*}
\exp{ \left(  \frac{\alpha}{2}\beta_{v}(x,y) \right) }&\leq \exp{ \big(\alpha (\delta+R)\big)} \exp{ \bigg( \alpha (v,w_{x}^{y})_{x}\bigg)} \exp{ \left( -\frac{\alpha}{2}d(x,y)\right)}.
\end{align*}
The definition (\ref{distance}) of the visual metric completes the proof.\\
The left hand side of the inequality of Lemma \ref{encadrement buseman dur} gives the other inequality.
\end{proof}

\begin{prop}\label{H-CHestimates}
Let $\mu$ be a $\Gamma$-invariant conformal density of dimension $\alpha$. Assume that $\big(\Lambda _{\Gamma}, d_{x},\mu_{x} \big)$ is Ahlfors $\alpha$-regular. Then there exists $R_{x}>0$ such that the function $\varphi_{x}$ satisfies the Harish-Chandra estimates on $\Gamma x \backslash B_{X}(x,R_{x})$.

Moreover, if $\Gamma$ is convex cocompact there exists $R_{x}>0$ such that the function $\varphi_{x}$ satisfies the Harish-Chandra estimates on $CH(\Lambda_{\Gamma})\backslash B_{X}(x,R_{x})$.
\end{prop}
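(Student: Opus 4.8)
The plan is to estimate $\varphi_x(y)$ from above and below by integrating the two-sided bounds from Lemma~\ref{ineghch} over $\partial X$ with respect to $\mu_x$, and then to control the resulting integral of $d_x(v,w_x^y)^{-\alpha}$ using Ahlfors regularity. First I would fix $R = R_x$ large enough that the shadows $\mathcal{O}_R(x,y)$ are nonempty for all relevant $y$ (this is where $R_x$ enters, and for the convex cocompact case one uses cocompactness of the action on $CH(\Lambda_\Gamma)$ to get a uniform such $R$). For $y$ with $d(x,y)$ bounded away from $0$, pick $w_x^y \in \mathcal{O}_R(x,y)$; Lemma~\ref{ineghch} then gives
\[
  \varphi_x(y) \le \exp(\alpha(\delta+R))\exp\!\Big(-\tfrac{\alpha}{2}d(x,y)\Big)\int_{\partial X}\frac{d\mu_x(v)}{d_x(v,w_x^y)^{\alpha}},
\]
and a matching lower bound with the $\min$ inside the integral.

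The key computation is the estimate
\[
  \int_{\partial X}\frac{d\mu_x(v)}{d_x(v,w)^{\alpha}} \asymp d(x,y)\quad\text{(up to multiplicative constants)},
\]
uniformly in $w \in \Lambda_\Gamma$, which is the source of the \emph{degree one} polynomials $Q_1, Q_2$. To see this I would slice $\partial X = \Lambda_\Gamma$ by the dyadic-type annuli $A_k = \{v : e^{-(k+1)} \le d_x(v,w) < e^{-k}\}$ for $0 \le k \lesssim d(x,y)$ together with the complementary piece $\{d_x(v,w) \ge e^{-k_0}\}$ where $e^{-k_0} \asymp \operatorname{Diam}(\Lambda_\Gamma)$. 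On $A_k$ the integrand is $\asymp e^{k\alpha}$, while Ahlfors $\alpha$-regularity gives $\mu_x(A_k) \le \mu_x(B(w,e^{-k})) \le C e^{-k\alpha}$, so each annulus contributes $O(1)$; summing over the $O(d(x,y))$ annuli gives the upper bound $\lesssim d(x,y)$. For the lower bound, the annuli with $k$ up to roughly $d(x,y)$ each contribute a constant bounded below (using $\mu_x(A_k) \ge C^{-1}e^{-k\alpha} - Ce^{-(k+1)\alpha} \ge c\,e^{-k\alpha}$, which is positive for a fixed geometric reason once $C$ is handled — here one may need the slightly finer $\mu_x(A_k) \asymp e^{-k\alpha}$, obtained by combining the upper Ahlfors bound on $B(w,e^{-(k+1)})$ with the lower bound on $B(w,e^{-k})$), and restricting the number of annuli to $\lesssim d(x,y)$ also keeps the truncation from the $\min$ harmless. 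This yields $\int \gtrsim d(x,y)$, and one packages both sides as $Q_1(d(x,y)), Q_2(d(x,y))$ with $Q_1(t) = c_1 t$, $Q_2(t) = c_2 t + c_2'$; shrinking to $y \notin B_X(x,R_x)$ guarantees $Q_1(d(x,y)) > 0$.

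For the convex cocompact refinement the only change is that one must verify the hypothesis of Lemma~\ref{ineghch} and the annular estimate hold uniformly for $y \in CH(\Lambda_\Gamma)$ rather than just on the orbit: cocompactness of $\Gamma \curvearrowright CH(\Lambda_\Gamma)$ supplies a uniform $R_x$ such that $\mathcal{O}_{R_x}(x,y) \cap \Lambda_\Gamma \neq \emptyset$ for every $y \in CH(\Lambda_\Gamma)$ (the endpoint $w_x^y$ of the geodesic ray from $x$ through $y$, suitably perturbed to land in $\Lambda_\Gamma$, works), and then the same dyadic slicing applies verbatim since the measure $\mu_x$ is supported exactly on $\Lambda_\Gamma$. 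The main obstacle I anticipate is the lower bound on $\mu_x(A_k)$: the raw two-sided Ahlfors inequality on balls does not immediately give a lower bound on an annulus (the lower bound on $B(w,e^{-k})$ could in principle be "used up" by the part inside $B(w,e^{-(k+1)})$), so one must argue that $\mu_x$ cannot concentrate too much on small balls — e.g. by iterating the regularity estimate at a slightly smaller scale or invoking the shadow lemma (Lemma~\ref{shadowlem}) to see that the needed mass is genuinely spread across scales. Everything else is bookkeeping with geometric series.
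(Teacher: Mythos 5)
Your overall strategy---integrate the pointwise shadow estimates of Lemma \ref{ineghch} against $\mu_{x}$ and use Ahlfors regularity to show the resulting integral grows linearly in $d(x,y)$---is the paper's strategy, but your first displayed inequality has a genuine gap: under Ahlfors $\alpha$-regularity the integral $\int_{\partial X} d_{x}(v,w_{x}^{y})^{-\alpha}\,d\mu_{x}(v)$ \emph{diverges} (each dyadic annulus around $w_{x}^{y}$ contributes a quantity bounded below by a constant, and there are infinitely many of them down to scale $0$), so the claimed upper bound for $\varphi_{x}(y)$ is vacuous as written. Your dyadic sum silently stops at $k\approx d(x,y)$, but you applied the pointwise bound $e^{\frac{\alpha}{2}\beta_{v}(x,y)}\lesssim e^{-\frac{\alpha}{2}d(x,y)}d_{x}(v,w_{x}^{y})^{-\alpha}$ on all of $\partial X$, and nothing in your argument accounts for the contribution of the innermost ball $B\big(w_{x}^{y},e^{-d(x,y)}\big)$. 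The missing step, which is exactly how the paper proceeds, is to split $\varphi_{x}(y)=\int_{B_{\gamma}}+\int_{\partial X\setminus B_{\gamma}}$ with $B_{\gamma}=B\big(w_{x}^{\gamma x},e^{-d(x,\gamma x)}\big)$: on $B_{\gamma}$ use the trivial bound $\beta_{v}(x,\gamma x)\le d(x,\gamma x)$ together with $\mu_{x}(B_{\gamma})\le Ce^{-\alpha d(x,\gamma x)}$ to get a contribution $\le Ce^{-\frac{\alpha}{2}d(x,\gamma x)}$ (absorbed into the constant term of $Q_{2}$), and only off $B_{\gamma}$ invoke Lemma \ref{ineghch}, where the layer-cake computation gives $\int_{\partial X\setminus B_{\gamma}}d_{x}^{-\alpha}(v,w)\,d\mu_{x}\le \alpha d(x,\gamma x)+C'$. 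This is a one-line fix, but without it the upper bound does not close.

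Two further comparison points. First, your concern about lower bounds for $\mu_{x}$ on annuli is legitimate, but you can sidestep it entirely as the paper does: the second inequality of Lemma \ref{ineghch} already carries the truncation $\min\{d_{x}(v,w)^{-\alpha},e^{\alpha d(x,y)}\}$, and writing
\begin{equation*}
\int_{\partial X}\min\{d_{x}(v,w)^{-\alpha},e^{\alpha d(x,y)}\}\,d\mu_{x}(v)=\int_{0}^{e^{\alpha d(x,y)}}\mu_{x}\big(B(w,t^{-1/\alpha})\big)\,dt
\end{equation*}
lets you apply the \emph{lower} Ahlfors bound on balls directly, with no annuli and no constant-cancellation issue; you do need $w_{x}^{\gamma x}\in\Lambda_{\Gamma}$ for that bound to apply, which is worth stating explicitly. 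Second, for the convex cocompact refinement the paper does not rerun the shadow estimates on $CH(\Lambda_{\Gamma})$: it picks a relatively compact fundamental domain, writes $y\in\gamma D'_{\Gamma}$, and uses the cocycle identity $\beta_{v}(x,y)=\beta_{v}(x,\gamma x)+\beta_{v}(\gamma x,y)$ with $|\beta_{v}(\gamma x,y)|\le \mbox{Diam}(D'_{\Gamma})$ to get $\varphi_{x}(y)\asymp\phi_{x}(\gamma)$ and $d(x,y)=d(x,\gamma x)+O(1)$, reducing to the orbit case. Your direct route would additionally require justifying that $\mathcal{O}_{R}(x,y)\cap\Lambda_{\Gamma}\neq\varnothing$ uniformly for $y\in CH(\Lambda_{\Gamma})$, which is true but not automatic in a general CAT(-1) space.
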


\begin{proof}

We first prove the right hand side inequality of (\ref{HCHestim}) on $\mathcal{Y}=\Gamma x $.
Let $\gamma \in \Gamma$, and consider a point $w_{x}^{\gamma x}\in \mathcal{O}_{R}(x,\gamma x) \cap \Lambda _{\Gamma}$.
Consider the ball of $\partial X$ of radius   $\exp{\big(-d(x,\gamma x)\big)}$ with respect to   $d_{x}$ centered at $w_{x}^{\gamma x}$ denoted by $$B_{\gamma}:=B\bigg(w_{x}^{\gamma x},\exp{\big(-d(x,\gamma x)\big)}\bigg).$$ 

\begin{align*}
\phi_{x}(\gamma)&=\int_{\partial X}\exp{\left( \frac{\alpha}{2} \beta_{v}(x,\gamma x)\right) }d\mu_{x}(v)\\ 
&=\int_{B_{\gamma}}\exp{\left( \frac{\alpha}{2} \beta_{v}(x,\gamma x)\right) }d\mu_{x}(v)+\int_{\partial X \backslash B_{\gamma}}\exp{\left( \frac{\alpha}{2} \beta_{v}(x,\gamma x)\right) }d\mu_{x}(v).
\end{align*}

 Ahlfors $\alpha$-regularity implies for the first term that there exists $C>0$ such that 
  \begin{align*}
 \int_{B_{\gamma}}\exp{\left( \frac{\alpha}{2} \beta_{v}(x,\gamma x)\right) }d\mu_{x}(v)&\leq \mu_{x}\big(B_{\gamma}\big)\exp{\left( \frac{\alpha}{2} d(x,\gamma x)\right) } \\
 \leq C\exp{\left( -\frac{\alpha}{2} d(x,\gamma x)\right) }.
 \end{align*}
 The right hand side inequality of Lemma \ref{ineghch} implies that
\begin{align*}
\int_{\partial X \backslash B_{\gamma}}\exp{\left( \frac{\alpha}{2} \beta_{v}(x,\gamma x)\right) }d\mu_{x}(v)&\leq C_{\alpha,\delta,R}\exp{ \left( -\frac{\alpha}{2}d(x,\gamma x)\right)}\int_{ \partial X \backslash B_{\gamma}}\frac{1}{d_{x}^{\alpha}\big(v, w_{x}^{\gamma x}\big)}d\mu_{x}(v),
\end{align*}
for some positive constant $C_{\alpha,\delta,R}>0$.

Write now
\begin{align*}
\int_{ \partial X \backslash B_{\gamma}}\frac{1}{d_{x}^{\alpha}\big(v,w_{x}^{\gamma x}\big)}d\mu_{x}(v)&=\int_{\mathbb R} \mu_{x}\bigg(\bigg\{ v \in \partial X\big |  \frac{1}{d_{x}^{\alpha}\big(v,w_{x}^{\gamma x})}>t  \bigg \} \bigg)dt \\
&=\int_{1/D^{\alpha}}^{\exp{\alpha d(x,\gamma x)}} \mu_{x}\bigg(\bigg\{ v \in \partial X\big |  d_{x}(v,w_{x}^{\gamma x})<\frac{1}{t^{1/ \alpha}}  \bigg \} \bigg)dt \\
&\leq \sum_{n=p}^{N} \mu_{x}\bigg(\bigg\{ v \in \Lambda_{\Gamma} \big |  d_{x}\big(v,w_{x}^{\gamma x})<\frac{1}{n^{1/ \alpha}}  \bigg \} \bigg)
\end{align*}
where $D$ denotes $\mbox{Diam}(\partial X)$, $p$ the integer part of $\frac{1}{D^{\alpha}}$ and $N$ then integer part of $\exp{(\alpha d(x,\gamma x))}+1$.
Ahlfors regularity implies that there exists $C>0$ such that $$\mu_{x}\bigg(\bigg\{ v \in \Lambda_{\Gamma} \big |  d_{x}\big(v,w_{x}^{\gamma x})<\frac{1}{n^{1/ \alpha}}  \bigg \} \bigg)\leq  \frac{C}{n} .$$ Hence there exists a constant $C'>0$ such that $$\int_{ \partial X \backslash B_{\gamma}}\frac{1}{d_{x}^{\alpha}\big(v,w_{x}^{\gamma x}\big)} d\mu_{x}(v)\leq \alpha d(x,\gamma x)+C'.$$
Hence, we have 
$$
\int_{\partial X \backslash B_{\gamma}}\exp{\left( \frac{\alpha}{2} \beta_{v}(x,\gamma x)\right) }d\mu_{x}(v) \leq C_{\alpha,\delta,R}  \exp{ \left( -\frac{\alpha}{2}d(x,\gamma x)\right)} (\alpha d(x,\gamma x)+C').$$

Therefore, we have found a polynomial of degree one such that $\varphi_{x}$ satisfies the (right hand side) Harish-Chandra estimates on $\Gamma x$. The left hand side of Harish-Chandra estimates on $\Gamma x$ is analogous by the second inequality of Lemma  \ref{ineghch}, but the constant term of the polynomial $Q_1$ might be non-positive. Hence the Harish-Chandra estimates hold only on $\Gamma x \backslash B_{X}(x,R_{x})$ for some positive number $R_{x}$.\\

Assume that $\Gamma$ is convex cocompact. We shall estimate $\varphi_{x}$ on $CH(\Lambda_{\Gamma})$.  Let $y\in CH(\Lambda_{\Gamma})$ and pick a fundamental domain $ D_{\Gamma}\subset CH(\Lambda_{\Gamma})$ relatively compact, and consider $D_{\Gamma}'$ a relatively compact neighborhood  of $x$ which contains $D_{\Gamma}$. Then there exists $\gamma \in \Gamma$ such that $y\in \gamma D_{\Gamma}'$. Thanks to the cocycle identity (\ref{Bus2}) we have  
 \begin{align*}
 \varphi_{x}(y)&=\int_{\partial X}\exp \bigg(  \frac{\alpha}{2} \beta_{v}(x,\gamma x)\bigg) \exp \bigg(  \frac{\alpha}{2} \beta_{v}(\gamma x, y)\bigg)d\mu_{x}(v).
  \end{align*}
Thanks to the properties of Busemann functions (\ref{Bus1}) and (\ref{Bus3}), observe that   $$ \exp \bigg( - \frac{\alpha}{2}\mbox{Diam}(D'_{\Gamma})\bigg) \phi_{x}(\gamma)\leq   \varphi_{x}(y)\leq   \phi_{x}(\gamma) \exp \bigg(  \frac{\alpha}{2}\mbox{Diam}(D'_{\Gamma})\bigg)$$

Observe also that $$ d(x,y)-\mbox{Diam}(D'_{\Gamma})\leq d(\gamma x, x)\leq d(x,y)+\mbox{Diam}(D_{\Gamma}')$$ for all $y\in X$ such that $d(x,y)\geq \mbox{Diam}(D'_{\Gamma})$. Since $\varphi_{x}$ satisfies the Harish-Chandra estimates on $\Gamma x \backslash B_{X}(x,R_{x})$ we have the Harish-Chandra estimates of $\varphi_{x}$ on $CH(\Lambda_{\Gamma})\backslash B_{X}(x, R_{x}')$ where $R_{x}'=\max{\{R_{x}, \mbox{Diam} (D'_{\Gamma})\}}$ and the proof is done. 

\end{proof}

\begin{remark}
Notice that a slight modification of the first part of this proof gives a geometrical proof of the Harish-Chandra estimates of the $\Xi$--Harish-Chandra function in the context of rank one semisimple Lie groups (see \cite{An} and \cite{GV}). It would be interesting to study an analog of Harish-Chandra estimates on $CH(\Lambda_{\Gamma})$ for Harish-Chandra functions associated with geometrically finite groups with parabolic elements. \end{remark}

\begin{remark}\label{GMF}
In \cite[Theorem 2]{SV}, B. Stratmann and S.-L. Velani prove, in the context of hyperbolic plane $\mathbb{H}^{n}$, the so-called \emph{Global Measure Formula} for 
geometrically finite groups with parabolic elements. A conformal density of a geometrically finite group with parabolic elements is Ahlfors regular if and only if all the parabolic cusps have the same rank and that rank is equal to the critical exponent of the group. Hence such geometrically finite groups belong to the class $\mathcal{C}$ since their spectrum are non-arithmetic. 
\end{remark}

\subsection{Uniform boundedness}

\begin{prop}\label{inegsample}
 Let $\mu$ be a $\Gamma$-invariant conformal density of dimension $\alpha(\Gamma)$ where 
 $\Gamma$ is in $\mathcal{C}$. Then there exists $\rho$ and an integer $N$ such that for all $n\geq N$, the sequence $F^{n}_{x,\rho}$ is uniformly bounded in the $L^{\infty}(\mu)$ norm.
  
  \end{prop}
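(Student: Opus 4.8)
The plan is to bound $F^n_{x,\rho}(v)$ uniformly in $v \in \partial X$ and in $n$, by decomposing each term of the sum according to whether $v$ is ``close to'' the shadow direction of $\gamma x$ or not, using the Busemann estimates of Proposition 3.3 and the Harish-Chandra estimates of Proposition 3.6. First I would fix $\rho > 0$ large enough (and $R = R_x$ large enough, and $N = N_{x,\rho}$) so that all shadows $\mathcal{O}_R(x,\gamma x)$ are nonempty, so that the Sullivan shadow lemma (Lemma 1.2) holds, and so that $\varphi_x$ satisfies the Harish-Chandra estimates on $\Gamma x \setminus B_X(x,R_x)$ (Proposition 3.6). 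Since only finitely many $\gamma$ have $d(x,\gamma x) \le R_x$ and there is a fixed bound on those terms, it suffices to control the tail; so I may assume $d(x,\gamma x) \ge R_x$ throughout, and in particular $\phi_x(\gamma) \asymp Q(d(x,\gamma x)) e^{-\frac{\alpha}{2} d(x,\gamma x)}$ for a degree-one polynomial $Q$ with positive leading coefficient, hence $\phi_x(\gamma) \ge c\, d(x,\gamma x) e^{-\frac{\alpha}{2} d(x,\gamma x)}$ for some $c > 0$ on the relevant range.

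Next, for a fixed $v \in \partial X$ and a fixed $\gamma \in C_n(x,\rho)$, I would estimate the numerator $\exp(\frac{\alpha}{2}\beta_v(x,\gamma x))$. Pick $w_x^{\gamma x} \in \mathcal{O}_R(x,\gamma x)$; by Lemma 3.5, $\exp(\frac{\alpha}{2}\beta_v(x,\gamma x)) \le e^{\alpha(\delta+R)} \exp(-\frac{\alpha}{2} d(x,\gamma x)) \, d_x(v, w_x^{\gamma x})^{-\alpha}$. Dividing by $\phi_x(\gamma)$ and using the lower Harish-Chandra bound, the $e^{-\frac{\alpha}{2}d(x,\gamma x)}$ factors cancel and a typical term is bounded by $\frac{C}{d(x,\gamma x)}\, d_x(v,w_x^{\gamma x})^{-\alpha}$; since $\gamma \in C_n(x,\rho)$ forces $d(x,\gamma x) \ge n-\rho$, this is $\le \frac{C}{n-\rho}\, d_x(v,w_x^{\gamma x})^{-\alpha}$. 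Summing over $\gamma \in C_n(x,\rho)$ gives
\[
F^n_{x,\rho}(v) \le \frac{C}{(n-\rho)\,|C_n(x,\rho)|} \sum_{\gamma \in C_n(x,\rho)} \frac{1}{d_x(v, w_x^{\gamma x})^{\alpha}}.
\]
So the whole problem reduces to showing the averaged sum $\frac{1}{|C_n(x,\rho)|}\sum_\gamma d_x(v,w_x^{\gamma x})^{-\alpha}$ is $O(n)$, uniformly in $v$.

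For that last sum I would split the annulus $C_n(x,\rho)$ according to dyadic (or rather, $n^{-1/\alpha}$-scale) shells around $v$: group the $\gamma$'s by the size of $d_x(v, w_x^{\gamma x})$, i.e. set $A_k = \{\gamma \in C_n(x,\rho) : 2^{-k-1} < d_x(v,w_x^{\gamma x}) \le 2^{-k}\}$ for $k \ge 0$, plus the leftover $\gamma$'s with $d_x(v, w_x^{\gamma x})$ bounded below, which contribute $O(1)$ per term. For $\gamma \in A_k$ the term is $\le 2^{(k+1)\alpha}$, so I need an upper bound on $|A_k|$. The key geometric fact is that if $d_x(v, w_x^{\gamma x})$ is small, then the shadow $\mathcal{O}_R(x,\gamma x)$ — which by the shadow lemma has $\mu_x$-measure comparable to $e^{-\alpha d(x,\gamma x)} \asymp e^{-\alpha n}$ — is contained in a ball $B(v, r_k)$ of radius $r_k \asymp 2^{-k}$ about $v$ (up to the additive $R+\delta$ errors, which only affect constants); moreover for distinct $\gamma$ in $C_n(x,\rho)$ these shadows have bounded overlap (a standard consequence of the triangle inequality / the fact that $d(x,\gamma x)$ is essentially constant on the annulus). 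Hence by Ahlfors $\alpha$-regularity, $|A_k| \cdot c\, e^{-\alpha n} \le \mu_x(B(v, C 2^{-k})) \le C' 2^{-k\alpha}$, provided $2^{-k} < \mathrm{Diam}(\Lambda_\Gamma)$, giving $|A_k| \le C'' e^{\alpha n} 2^{-k\alpha}$. The sum over $k$ then telescopes: $\sum_k |A_k| 2^{(k+1)\alpha} \le C''' e^{\alpha n} \cdot (\text{number of relevant }k)$, and the number of scales $k$ that can occur is $O(n)$ because $d_x(v,w_x^{\gamma x})$ is bounded below by something like $e^{-d(x,\gamma x)} \asymp e^{-n}$ (a point of the boundary is never ``infinitely close'' to a shadow direction at finite distance). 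Dividing by $|C_n(x,\rho)| \asymp e^{\alpha n}$ (again by the shadow lemma, summing measures of boundedly-overlapping shadows of total mass $\asymp 1$), the $e^{\alpha n}$ cancels and we are left with $O(n)$, which after the extra $\frac{1}{n-\rho}$ factor above yields a uniform bound.

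The main obstacle I expect is the combinatorial/geometric bookkeeping in the last step: making precise (i) that small $d_x(v, w_x^{\gamma x})$ forces the shadow $\mathcal{O}_R(x,\gamma x)$ into a controlled ball around $v$, (ii) the bounded-overlap property of the shadows $\{\mathcal{O}_R(x,\gamma x) : \gamma \in C_n(x,\rho)\}$, which is what lets one pass from counting $\gamma$'s to adding up boundary measures, and (iii) that the relevant range of scales $k$ is only $O(n)$ rather than unbounded — all three of these rely on the annular structure (that $d(x,\gamma x)$ varies by at most $2\rho$) together with $\delta$-hyperbolicity, and each introduces additive constants that must be tracked to see they contribute only multiplicative constants. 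The estimate of $H^n_{x,\rho}$ is then entirely parallel, replacing $\phi_x(\gamma)$ by $\|\mu_{\gamma x}\|$ and $\frac{\alpha}{2}\beta_v$ by $\alpha\beta_v$, and using the shadow lemma in place of the Harish-Chandra estimates; the same dyadic decomposition goes through verbatim.
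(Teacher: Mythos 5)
Your route is genuinely different from the paper's, so let me first compare and then point out one real gap.

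The paper never introduces the singular kernel $d_{x}(v,\cdot)^{-\alpha}$ at the level of the sum over $\gamma$. Instead it uses Proposition \ref{maj buseman} to dominate each term $\exp(\frac{\alpha}{2}\beta_{v}(x,\gamma x))$ by the \emph{average} of $\exp(\frac{\alpha}{2}\beta_{w}(x,q_{v}))$ over the shadow $\mathcal{O}_{R}(x,\gamma x)$ (with $q_{v}$ the point at distance $n+\rho$ on $[xv)$), so that after the bounded-overlap argument the whole sum collapses to $m\,\varphi_{x}(q_{v})$; it then quotes the \emph{upper} Harish-Chandra estimate at $q_{v}$ against the lower one at $\gamma x$, and the two degree-one polynomials cancel. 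You instead apply the pointwise bound of Lemma \ref{ineghch}, use only the \emph{lower} Harish-Chandra estimate on the orbit, and re-prove by hand (via a dyadic decomposition, the shadow lemma and bounded overlap) the discrete analogue of the estimate $\int_{\partial X}d_{x}^{-\alpha}(v,\cdot)\,d\mu_{x}=O(d(x,y))$ that the paper already established inside the proof of Proposition \ref{H-CHestimates}. Your version has the advantage of not needing the Harish-Chandra upper bound at the auxiliary point $q_{v}$, hence no need for $q_{v}\in CH(\Lambda_{\Gamma})$, so it would bypass the paper's Step 2 (the projection of $x$ onto $CH(\Lambda_{\Gamma})$) entirely; the cost is that you must redo the layer-cake computation in a counting form, which is exactly where your write-up slips.

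The gap: your assertion that ``$d_{x}(v,w_{x}^{\gamma x})$ is bounded below by something like $e^{-d(x,\gamma x)}$'' is false. Both $v$ and $w_{x}^{\gamma x}$ are boundary points and nothing prevents $v=w_{x}^{\gamma x}$, in which case the term $d_{x}(v,w_{x}^{\gamma x})^{-\alpha}$ is infinite and your reduction to ``the averaged sum is $O(n)$'' already fails. Concretely, for scales $2^{-k}\ll e^{-n}$ the inclusion $\mathcal{O}_{R}(x,\gamma x)\subset B(v,C2^{-k})$ breaks down (the shadow has diameter $\asymp e^{-n}$, larger than the ball), so the bound $|A_{k}|\lesssim e^{\alpha n}2^{-k\alpha}$ is not available there and the weight $2^{(k+1)\alpha}$ is uncontrolled. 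The repair is exactly the device the paper uses in Proposition \ref{H-CHestimates} (the ball $B_{\gamma}$), i.e.\ the $\min$ appearing in the second inequality of Lemma \ref{ineghch}: for the $\gamma$ with $d_{x}(v,w_{x}^{\gamma x})\leq e^{-(n-\rho)}$ use the trivial bound $\beta_{v}(x,\gamma x)\leq d(x,\gamma x)$ instead of the kernel bound. By the shadow lemma, bounded overlap and upper Ahlfors regularity of $B(v,Ce^{-(n-\rho)})$ there are only $O(1)$ such $\gamma$, each contributing at most $e^{\alpha(n+\rho)}/\phi_{x}(\gamma)\lesssim e^{\alpha(n+\rho)}\cdot e^{\frac{\alpha}{2}(n+\rho)}/\big((n-\rho)e^{\frac{\alpha}{2}(n+\rho)}\cdot 1\big)$, hence $O(e^{\alpha n}/n)$ in total, which is harmless after dividing by $|C_{n}(x,\rho)|$. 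One further minor point: the lower bound $|C_{n}(x,\rho)|\geq c\,e^{\alpha(n-\rho)}$, which you need in the final division, is not a direct consequence of the shadow lemma (that gives the upper bound on the count); it is supplied by Roblin's counting asymptotic (\ref{estimcouronne}), or, if one insists on the shadow lemma, by an additional covering argument which is where the ``there exists $\rho$'' in the statement comes from. With these two repairs your argument goes through and in fact yields the conclusion for every basepoint $x$ without the convex-hull projection.
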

\begin{proof} 
If two sequences $u_{n}$ and $v_{n}$ of positive real numbers satisfy  $\lim_{n \to \infty} u_{n} / v_{n}=1$ we write $u_{n}\sim v_{n}$.

We shall prove first that $C_{n}(x,\rho)$ is not empty, at least for $n$ large enough.
For a non negative integer $n$, set $\Gamma_{n}(x):= \left\{\gamma \in \Gamma|d( x,\gamma  x)<  n \right\}$.
  Applying Theorem \ref{roblin} to the function $\textbf{1} _{\overline {X}}\otimes \textbf{1}_{\overline {X}}$ we obtain as $n\rightarrow +\infty $ $$|\Gamma_{n}(x)| \sim \frac{\exp(\alpha n)\| \mu_{x}\|^{2}}{\alpha \|m_{\Gamma} \|},$$ and thus as $n\rightarrow +\infty $
 \begin{equation}\label{estimcouronne}
 |C_{n}(x,\rho)| \sim \frac{\exp(\alpha n)(2\sinh (\alpha \rho))\| \mu_{x}\|^{2}}{\alpha \|m_{\Gamma} \|}\cdot 
 \end{equation}
Hence, for all $\rho$ there exists $N_{x,\rho}$ such that for all $n\geq N_{x,\rho}$ we have $|C_{n}(x,\rho)|>0$.

There are two steps:
\\

\textit{Step 1:
Assume that $x$ is in $CH(\Lambda_{\Gamma})$. Then for all $\rho>0$, there exists $N^{'}_{x,\rho}$and for all $n\geq N^{'}_{x,\rho} $ the sequence  $F_{x,\rho}^{n}$ is uniformly bounded with respect to  the $L^{\infty}(\mu)$ norm.}
\\

First of all let $R_{x}$ be a positive real number such that the Harish-Chandra estimates hold on $CH(\Lambda_{\Gamma}) \backslash R_{x}$. Let $\rho>0$ and let $N_{x,\rho}$ be an integer such that for all $n\geq N_{x,\rho}$ we have $C_{n,\rho}(x) \subset CH(\Lambda_{\Gamma}) \backslash R_{x}$. Let $v$ be in $\Lambda_{\Gamma}$, then Proposition \ref{maj buseman} provides $q_{v}\in X$ with $d(x,q_{v})=n+\rho$, such that for all $\gamma  \in C_{n,\rho}(x)$:

\begin{equation}
\exp{ \left(  \frac{\alpha}{2} \beta_{v}(x,\gamma  x) \right)\leq \frac{\exp\big(\alpha (2(R+\rho)+4\delta)\big)}{\mu_{x}(\mathcal{O}_{R}(x,\gamma x))} \int_{\mathcal{O}_{R}(x,\gamma x)}} \exp{ \left(  \frac{\alpha}{2} \beta_{w}(x,q_{v})  \right) }d\mu_{x}(w).
\end{equation}
We set for the following computation $C_{0}:=\exp\big(\alpha (2(R+\rho)+4\delta)\big)$.\\
 Therefore we have for $v\in \Lambda_{\Gamma}:$
\begin{align*}
F_{x,\rho}^{n}(v)&=\frac{1}{|C_{n}(x,\rho)|} \sum_{\gamma \in C_{n}(x,\rho)}\frac{ \exp{ \left(  \frac{\alpha}{2} \beta_{v}(x,\gamma  x)  \right) } }{\phi_{x}(\gamma)} \\
&\leq \frac{C_{0}}{|C_{n}(x,\rho)|} \sum_{\gamma \in C_{n}(x,\rho)} \frac{  \int_{\mathcal{O}_{R}(x,\gamma  x)} \exp{ \left(  \frac{\alpha}{2} \beta_{w}(x,q_{v})  \right)  }d\mu_{x}(w)}{\mu_{x}\big(\mathcal{O}_{R}(x,\gamma  x)\big) \phi_{x}(\gamma)}\\
&\leq C\frac{C_{0}}{\exp{(-\alpha (n-\rho))} |C_{n}(x,\rho)|}\left( \sup_{\gamma \in C_{n}(x,\rho)}{\frac{1}{\phi_{x}(\gamma)}} \right) \sum_{\gamma \in C_{n}(x,\rho)}  \int_{\mathcal{O}_{R}(x,\gamma  x)} \exp{ \left(  \frac{\alpha}{2} \beta_{w}(x,q_{v})  \right) } d\mu_{x}(w)\\
&\leq C\frac{C_{0}}{ \exp{(-\alpha (n-\rho))} |C_{n}(x,\rho)|}\left( \sup_{\gamma \in C_{n}(x,\rho)}{\frac{1}{\phi_{x}(\gamma)}} \right)  \big(m\times  \varphi_{x}(q_{v})\big),
\end{align*}
 where the last inequality follows from the fact that there exists an integer $m$ such that for all $w\in \partial X $ the cardinality of $\lbrace \gamma \in C_{n}(x,\rho)| w \in \mathcal{O}_{R}(x,\gamma x)\rbrace$ is bounded by $m$.
 
The Sullivan Shadow lemma (for $R$ large enough) implies that there exists $c'>0$ such that for all $n$ big enough we have :  
 \begin{equation*}
\exp{(-\alpha (n-\rho))} |C_{n}(x,\rho)| \geq c'.
 \end{equation*}

Since the hypothesis guarantee the Ahlfors regularity of the limit set for the groups in the class $\mathcal{C}$ ( \cite[2.7.5 Th\'eor\`eme]{Bou} for convex cocompact groups, the case of lattices is well known) then Proposition \ref{H-CHestimates} implies that there exists $C'>0$, such that for $q_{v}\in  CH(\Lambda_{\Gamma}) \backslash R_{x}$ with $d(x,q_{v})=n+\rho$ we have $$\left( \sup_{\gamma \in C_{n}(x,\rho)}{\frac{1}{\phi_{x}(\gamma)}} \right) \varphi_{x}(q_{v})\leq C'.$$

 Hence for  $x\in CH(\Lambda_{\Gamma})$ and for all $\rho>0$, there exists $K>0$ and $N^{'}_{x,\rho}$ such that for all $n\geq N^{'}_{x,\rho}$ we have $$\|F^{n}_{x,\rho}\|_{\infty}\leq K.$$

 \textit{Step 2: Assume that $x$ is in $X\backslash CH(\Lambda_{\Gamma})$. There exist $\rho'_{x}>0$ and an integer $N^{'}_{x,\rho'_{x}}$ such that for all $n\geq N^{'}_{x,\rho'_{x}} $ the sequence  $F_{x,\rho}^{n}$ is uniformly bounded with respect to  the $L^{\infty}(\mu)$ norm.}\\

 Fix $\rho>0$ and let $x_{0}$ be the projection of $x$ in $CH(\Lambda_{\Gamma})$ and set  $$\kappa:= d\big(x,CH(\Lambda_{\Gamma})\big)=d(x,x_{0}).$$
Using the relations (\ref{Bus2}), (\ref{Bus4}), (\ref{Bus1}), and (\ref{Bus3}) we obtain $$\phi_{x}(\gamma)\geq \exp{(\alpha \kappa )}\phi_{x_{0}}(\gamma) .$$
 Observe that $ C_{n}(x,\rho)\subset C_{n}(x_{0},\rho+2\kappa)$. We have:
 \begin{align*}
 F^{n}_{x,\rho}(v)&=\frac{1}{|C_{n}(x,\rho)|} \sum_{\gamma \in C_{n}(x,\rho)}\frac{ \exp{ \left(  \frac{\alpha}{2} \beta_{v}(x,\gamma  x)  \right) } }{\phi_{x}(\gamma)}\\
 &=\frac{1}{|C_{n}(x,\rho)|} \sum_{\gamma \in C_{n}(x,\rho)}\frac{ \exp{ \left(  \frac{\alpha}{2} \beta_{v}(x,  x_{0})  \right) } \exp{ \left(  \frac{\alpha}{2} \beta_{v}(x_{0},\gamma  x_{0})  \right) } \exp{ \left(  \frac{\alpha}{2} \beta_{v}(\gamma x_{0},\gamma  x)  \right) } }{\phi_{x}(\gamma)}\\
 &\leq   \frac{\exp{ (2\alpha \kappa)}}{|C_{n}(x,\rho)|} \sum_{\gamma \in C_{n}(x_{0},\rho+2\kappa)}\frac{ \exp{ \left(  \frac{\alpha}{2} \beta_{v}(x_{0},\gamma  x_{0})  \right) }  }{\phi_{x_{0}}(\gamma)}\\
 &= \bigg( \exp{ (2\alpha \kappa)}  \frac{|C_{n}(x_{0},\rho+2\kappa)|}{|C_{n}(x,\rho)|} \bigg) F_{x_{0},\rho+2\kappa}^{n},
 \end{align*}
 where the third inequality comes from the relations (\ref{Bus4}) and (\ref{Bus3}).
 Since $|C_{n}(x_{0},\rho+2\kappa)| / |C_{n}(x,\rho)| $ is bounded above by some constant depending on $\rho$ and $\kappa$, we apply \textit{Step 1} to $F_{x_{0},\rho +2\kappa}^{n}$ with $x_{0}$ and $\rho+2\kappa$ to complete the proof.

 \end{proof}
 \begin{remark}
If for any choice of an origin $x$ the metric measure space $(\Lambda_{\Gamma},d_{x},\mu_{x})$  is Ahlfors regular and if $CH(\Lambda_{\Gamma})=X$, then the above proposition holds for all $\rho>0$ independently of the choice of $x$. These conditions include the case of lattices in rank one semisimple Lie groups and fundamental groups of compact negatively curved manifolds. 
\end{remark}

\begin{remark}
For a proof of this uniform boundedness in the context of hyperbolic groups we refer to \cite[Proposition 5.2]{LG}. 
\end{remark}

\begin{prop}\label{inegsample2}
 Let $\mu$ be $\Gamma$-invariant conformal density of dimension $\alpha(\Gamma)$ the critical exponent of the group and let $\Gamma$ be a discrete group of isometries of a CAT(-1) space $X$ with a non-arithmetic spectrum with a finite BMS measure. Assume that there exists $C>0$ such that for all $y\in X$ we have $\|\mu_{y}\| / \|\mu_{x}\| \leq C$. For all  $\rho>0$, there exists an integer $N$ such that for all $n\geq N$ the sequence of functions $H^{n}_{x,\rho}$ is uniformly bounded in the $L^{\infty}(\mu)$ norm.
\end{prop}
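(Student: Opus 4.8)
The plan is to run \textit{Step 1} of the proof of Proposition \ref{inegsample} almost verbatim, replacing the Harish-Chandra function $\phi_{x}(\gamma)$ appearing in the denominator of $F^{n}_{x,\rho}$ by $\|\mu_{\gamma x}\|$. What makes this version \emph{lighter} is that, by $\Gamma$-invariance, $\|\mu_{\gamma x}\|=\|\gamma_{*}\mu_{x}\|=\|\mu_{x}\|$ for every $\gamma\in\Gamma$ (because $\gamma$ maps $\partial X$ onto itself and $\mu_{x}$ is carried by $\partial X$); hence all the denominators equal the constant $\|\mu_{x}\|$, and neither Ahlfors regularity nor the Harish-Chandra estimates of Proposition \ref{H-CHestimates} are needed. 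The hypothesis $\|\mu_{y}\|\le C\|\mu_{x}\|$ will be used only for the auxiliary points $q_{v}$ produced by Proposition \ref{maj buseman}, which lie off the orbit $\Gamma x$. Exactly as in the proof of Proposition \ref{inegsample}, non-arithmeticity of the spectrum and finiteness of the BMS measure allow us to invoke Theorem \ref{roblin}, so that $C_{n}(x,\rho)$ is nonempty for $n$ large and $|C_{n}(x,\rho)|\ge c\,e^{\alpha n}$ for some $c>0$ and all large $n$ (the estimate (\ref{estimcouronne}); equivalently one may use the lower bound $e^{-\alpha(n-\rho)}|C_{n}(x,\rho)|\ge c'$ coming from Sullivan's shadow lemma).

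First I would fix $\rho>0$ and choose $R$ large enough that Lemma \ref{shadowlem} applies and the shadows $\mathcal{O}_{R}(x,\gamma x)$ have positive $\mu_{x}$-measure. For $n\ge\rho$ and $v\in\Lambda_{\Gamma}$, let $q_{v}\in X$ with $d(x,q_{v})=n+\rho$ be the point furnished by Proposition \ref{maj buseman}, so that $\beta_{v}(x,\gamma x)\le\beta_{w}(x,q_{v})+2(R+\rho)+4\delta$ for every $\gamma\in C_{n}(x,\rho)$ and every $w\in\mathcal{O}_{R}(x,\gamma x)$. Setting $C_{0}=\exp(\alpha(2(R+\rho)+4\delta))$, I would exponentiate this inequality, average over $w\in\mathcal{O}_{R}(x,\gamma x)$ against $\mu_{x}$, and divide by $\mu_{x}(\mathcal{O}_{R}(x,\gamma x))$, obtaining
$$
\exp\big(\alpha\beta_{v}(x,\gamma x)\big)\le\frac{C_{0}}{\mu_{x}(\mathcal{O}_{R}(x,\gamma x))}\int_{\mathcal{O}_{R}(x,\gamma x)}\exp\big(\alpha\beta_{w}(x,q_{v})\big)\,d\mu_{x}(w).
$$

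Then, just as in Proposition \ref{inegsample}, I would sum over $\gamma\in C_{n}(x,\rho)$ and use: (i) the bounded-multiplicity fact — there is an integer $m$, independent of $n$, bounding the cardinality of $\{\gamma\in C_{n}(x,\rho)\mid w\in\mathcal{O}_{R}(x,\gamma x)\}$ for every $w$, which rests on the $\gamma x$, $\gamma\in C_{n}(x,\rho)$, lying in a shell of fixed width $2\rho$; (ii) the conformality identity $\int_{\partial X}\exp(\alpha\beta_{w}(x,q_{v}))\,d\mu_{x}(w)=\|\mu_{q_{v}}\|$; (iii) Lemma \ref{shadowlem} in the form $\mu_{x}(\mathcal{O}_{R}(x,\gamma x))^{-1}\le C_{1}\,e^{\alpha(n+\rho)}$ for $\gamma\in C_{n}(x,\rho)$; and (iv) the identities $\|\mu_{\gamma x}\|=\|\mu_{x}\|$ together with the hypothesis $\|\mu_{q_{v}}\|\le C\|\mu_{x}\|$. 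These combine to give
$$
H^{n}_{x,\rho}(v)\le\frac{C_{0}C_{1}mC\,e^{\alpha(n+\rho)}}{|C_{n}(x,\rho)|}.
$$
The lower bound on $|C_{n}(x,\rho)|$ recalled in the first paragraph then bounds the right-hand side by a constant depending only on $x,\rho,R,\delta,\alpha,C$ and $m$, hence independent of $n$ and of $v\in\Lambda_{\Gamma}$; since $\mu_{x}$ is carried by $\Lambda_{\Gamma}$, this is the asserted uniform bound of $(H^{n}_{x,\rho})_{n\ge N}$ in the $L^{\infty}(\mu)$ norm.

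Because no Ahlfors-type input is used, there is no need to assume $x\in CH(\Lambda_{\Gamma})$ nor to single out a particular $\rho$: the argument works for the fixed $x$ and every $\rho>0$, which is why the statement is free of the limitation that forced \textit{Step 2} in Proposition \ref{inegsample}. I do not expect a serious obstacle: the delicate points are merely that the multiplicity constant $m$ and the shadow-lemma constants can be chosen uniformly in $n$, and these are handled exactly as in the proof of Proposition \ref{inegsample}. In short, this proposition is a routine — indeed easier — variant of the uniform boundedness of $(F^{n}_{x,\rho})_{n}$.
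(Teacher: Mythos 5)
Your proof is correct and is exactly the argument the paper intends: the paper does not write out a proof of this proposition but states that it ``follows the same method'' as Proposition \ref{inegsample} and is easier because the Harish-Chandra estimates are not needed, which is precisely what you carry out (Busemann estimate on shadows via Proposition \ref{maj buseman}, bounded multiplicity of shadows, Sullivan's shadow lemma, the conformality identity giving $\|\mu_{q_v}\|$, the hypothesis $\|\mu_{q_v}\|\le C\|\mu_x\|$, and the lower bound on $|C_n(x,\rho)|$ from (\ref{estimcouronne})). The only cosmetic caveat is that the lower bound $e^{-\alpha(n-\rho)}|C_n(x,\rho)|\ge c'$ really comes from Roblin's theorem rather than from the shadow lemma alone, but since you also invoke (\ref{estimcouronne}) this does not affect the argument.
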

The proof for $H^{n}_{x,\rho}$ follows the same method and is left to the reader. Notice that this proof is easier because it does not deal with the Harish-Chandra estimates.

\section{Analysis of matrix coefficients}\label{section4}
In this section we fix $x$ as an origin of $X$.
\subsection{Notation} Let $\Gamma$ be a discrete group of isometries of $X$ and let $\mu$ be a $\Gamma$-invariant conformal density of dimension $\alpha.$ Let $A$ be a subset of $\partial X$ and $a>0$ positive real number and define $A_{x}(a)$ the subset of $\partial X$ as $$ A_{x}(a)=\lbrace  v | \inf_{w\in A_{}}d_{x}(v,w) <\exp({-a})\rbrace .$$
We will write $A(a)$ instead of $A_{x}(a)$. Recall that $\cap_{a>0}A(a)=\overline{A}$.
\\

Let $R$ a positive real number and define the cone of base $A$ to be $$C_{R }(x,A):=\lbrace y\in X|\exists v \in A \mbox{ satisfying } [xv)\cap B(y,R)\neq \varnothing \rbrace,$$ where $[xv)$ represents the unique geodesic passing through $x$ with the ending point $v\in \partial X$. In other words we have:
\begin{equation}
C_{R }(x,A):=\lbrace y\in X| \mathcal{O}_{R}(x,y)\cap A \neq \varnothing \rbrace.
\end{equation}

Define $b_{x}(y)$ the function 
\begin{align}
b_{x}(y):v\in \partial X \mapsto \exp\bigg(\frac{\alpha}{2}\beta_{v}(x,y) \bigg).
\end{align}
Notice that $\varphi_{x}(y)=\int_{\partial X} b_{x}(y) (v)d\mu_{x}(v)$.
\subsection{Sharp estimates}
Assume that $\varphi_{x}$ satisfies Harish-Chandra estimates on $\mathcal{Y}$.

\begin{lemma}\label{majorationendehors}

Let $A$ be a Borel subset of $\partial X$ and let $a>0$. There exists a constant $C_{0}$ such that for all $y$ in $\mathcal{Y}$ satisfying $\mathcal{O}_{R}(x,y) \cap A(a)=\varnothing$, we have $$\frac{\langle b_{x}(y),\chi_{A}\rangle}{\varphi_{x}(y)} \leq \frac{C_{0}\exp (a)}{d(x,y)} \cdot$$ 
\end{lemma}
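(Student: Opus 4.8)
The plan is to bound the numerator $\langle b_x(y),\chi_A\rangle=\int_A\exp\!\big(\tfrac{\alpha}{2}\beta_v(x,y)\big)\,d\mu_x(v)$ pointwise on $A$, exploiting the disjointness hypothesis $\mathcal{O}_R(x,y)\cap A(a)=\varnothing$ to control $\beta_v(x,y)$ uniformly for $v\in A$, and then to divide by the Harish-Chandra lower bound for $\varphi_x(y)$, whose exponential factor cancels the one coming from the numerator.

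First I would translate the hypothesis. Fix a point $w_x^y\in\mathcal{O}_R(x,y)$ (non-empty once $R$ is large, as used throughout Section \ref{section3}). Since $w_x^y\notin A(a)$ we have $\inf_{v\in A}d_x(v,w_x^y)\geq e^{-a}$, i.e. $(v,w_x^y)_x\leq a$ for all $v\in A$. The right-hand inequality of Lemma \ref{encadrement buseman dur} with $w=w_x^y$ then gives $(v,y)_x\leq(v,w_x^y)_x+R+\delta\leq a+R+\delta$, and with $\beta_v(x,y)=2(v,y)_x-d(x,y)$ from (\ref{buseman}) this yields, for every $v\in A$,
$$b_x(y)(v)=\exp\!\Big(\tfrac{\alpha}{2}\beta_v(x,y)\Big)\leq \exp\big(\alpha(R+\delta)\big)\,e^{\alpha a}\,\exp\!\Big(-\tfrac{\alpha}{2}d(x,y)\Big);$$
equivalently, this is the first inequality of Lemma \ref{ineghch} combined with $d_x(v,w_x^y)^{-\alpha}\leq e^{\alpha a}$. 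Integrating over $A$ and using $\mu_x(A)\leq\|\mu_x\|$ gives $\langle b_x(y),\chi_A\rangle\leq\|\mu_x\|\,e^{\alpha(R+\delta)}\,e^{\alpha a}\,e^{-\frac{\alpha}{2}d(x,y)}$.

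Next I would divide by the Harish-Chandra lower estimate. Since $\varphi_x$ satisfies (\ref{HCHestim}) on $\mathcal{Y}$, for $y\in\mathcal{Y}$ we have $\varphi_x(y)\geq Q_1\big(d(x,y)\big)e^{-\frac{\alpha}{2}d(x,y)}$ with $Q_1$ a degree-one polynomial and $Q_1(d(x,y))>0$, so
$$\frac{\langle b_x(y),\chi_A\rangle}{\varphi_x(y)}\leq\frac{\|\mu_x\|\,e^{\alpha(R+\delta)}\,e^{\alpha a}}{Q_1\big(d(x,y)\big)}.$$
Because $Q_1$ has positive leading coefficient and, on the set where the Harish-Chandra estimates are available, $d(x,y)$ stays bounded away from the root of $Q_1$, the ratio $d(x,y)/Q_1(d(x,y))$ is bounded; hence $1/Q_1(d(x,y))\leq C/d(x,y)$, and setting $C_0:=C\|\mu_x\|\,e^{\alpha(R+\delta)}$ gives the claimed inequality (the argument produces the factor $e^{\alpha a}$, which is the form actually needed in the sequel).

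I do not expect any serious obstacle: the two geometric inputs, Lemma \ref{encadrement buseman dur} (or Lemma \ref{ineghch}) and the Harish-Chandra lower bound, do all the work, and the hypothesis on $A(a)$ converts directly into the uniform estimate $(v,w_x^y)_x\leq a$. The one point needing a word of care is the final passage from $1/Q_1(d(x,y))$ to $C/d(x,y)$: this requires $d(x,y)$ to be bounded away from the positive root of $Q_1$, which is precisely why one works on $\mathcal{Y}$ (in the applications a complement $\Gamma x\setminus B_X(x,R_x)$ or $CH(\Lambda_\Gamma)\setminus B_X(x,R_x)$) rather than on all of $X$.
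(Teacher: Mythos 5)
Your proof is correct and follows essentially the same route as the paper's: a pointwise bound for $b_x(y)$ on $A$ obtained from Lemma \ref{encadrement buseman dur} (equivalently the first inequality of Lemma \ref{ineghch}) together with the disjointness hypothesis $d_x(v,w_x^y)\geq e^{-a}$, then division by the lower Harish-Chandra estimate and absorption of $1/Q_1\big(d(x,y)\big)$ into $C/d(x,y)$. The only cosmetic differences are that the paper first disposes of the case $d(x,y)<a$ trivially (using $\langle b_x(y),\chi_A\rangle\leq\varphi_x(y)$), and, as you correctly observe, both arguments actually produce the factor $e^{\alpha a}$ rather than $e^{a}$ --- a harmless discrepancy since $C_0$ may depend on $a$ and only the $t_0\to\infty$ limit at fixed $a$ is used in Proposition \ref{deform}.
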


\begin{proof}
Let $y \in \mathcal{Y}$ and assume that $d(x,y)<a$. It is easy to check that $$\frac{\langle b_{x}(y),\chi_{A}\rangle}{\varphi_{x}(y)} \leq \frac{\exp (a)}{d(x,y)}\cdot $$
Now assume that $d(x,y)\geq a$.

If $v\in A(a)$ and $w\in O_{R}(x,y)$, since $\mathcal{O}_{R}(x,y)\cap A(a)=\varnothing$ we have $d_{x}(v,w)>\exp{(-a)}$. 

Using the first inequality Lemma \ref{ineghch} and the above observation we have for all $w\in \mathcal{O}_{R}(x,y)$:
\begin{align*}
\langle b_{x}(y),\chi_{A} \rangle &\leq\exp\bigg({-\frac{\alpha}{2}d(x,y)}\bigg)\int_{A(a)}  \frac{1}{d^{\alpha}_{x}(v,w)} \exp{\bigg(\alpha(R+\delta)\bigg)}d\mu_{x}(v) \\
&\leq \exp{\bigg(\alpha(R+\delta+a)\bigg)}\|\mu\|_{x}\exp\bigg({-\frac{\alpha}{2}d(x,y)}\bigg)\\
&\leq \exp{\bigg(\alpha(R+\delta+a)\bigg)}\|\mu\|_{x}\frac{\varphi_{x}(y)}{Q_{1}\big(d(x,y)\big)}
\end{align*}
where the last inequality comes from the left hand side of Harish-Chandra estimates on $\mathcal{Y}$. Since $Q_{1}$ is a polynomial of degree one, the proof is complete. 
\end{proof}

Now assume that $\varphi_{x}$ satisfies the left hand side of  Harish-Chandra estimates on $\mathcal{Y}=\Gamma x$.

\begin{prop}\label{deform}
Let $\psi_{t}\in l^{1}(\Gamma)$ such that $\|\psi_{t}\|_{1}\leq 1$, and assume that $$\lim_{t\rightarrow +\infty}\psi_{t}(\gamma)=0,$$ for all $\gamma \in \Gamma$. Then for every Borel subset $A\subset  \partial X$ we have for all $a>0$ $$\limsup_{t \rightarrow+\infty} \sum_{\gamma \in \Gamma}\psi_{t}(\gamma)\frac{\langle \pi_{x}(\gamma)\textbf{1}_{\partial X},\chi_{A}\rangle}{\phi_{x}(\gamma)} \leq \limsup_{t \rightarrow+\infty} \sum_{\gamma \in \Gamma}\psi_{t}(\gamma)D_{\gamma  x}  ( \chi_{C_{R}(x,A(a))}).$$
\end{prop}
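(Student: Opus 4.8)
The plan is to split the left-hand sum according to whether a given $\gamma x$ lies in the cone $C_R(x,A(a))$ or not, and to show that the contribution of the "far" terms is negligible in the $\limsup$. Concretely, for each $\gamma\in\Gamma$ write
\[
\frac{\langle \pi_x(\gamma)\mathbf 1_{\partial X},\chi_A\rangle}{\phi_x(\gamma)}
=\frac{\langle b_x(\gamma x),\chi_A\rangle}{\varphi_x(\gamma x)},
\]
using that $\pi_x(\gamma)\mathbf 1_{\partial X}(v)=\exp(\tfrac{\alpha}{2}\beta_v(x,\gamma x))=b_x(\gamma x)(v)$, that $\phi_x(\gamma)=\varphi_x(\gamma x)$, and that $\langle\,\cdot\,,\chi_A\rangle$ is integration against $\mu_x$ over $A$. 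Since each term is nonnegative and each $\psi_t$ has $\ell^1$-norm at most $1$, bounding $\langle b_x(\gamma x),\chi_A\rangle/\varphi_x(\gamma x)$ by $D_{\gamma x}(\chi_{C_R(x,A(a))})=\chi_{C_R(x,A(a))}(\gamma x)$ on the "near" set is immediate, because that indicator equals $1$ there while the ratio is at most $1$ (as $\chi_A\le\mathbf 1_{\partial X}$).

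The substance is the "far" terms, i.e.\ those $\gamma$ with $\gamma x\notin C_R(x,A(a))$, equivalently $\mathcal O_R(x,\gamma x)\cap A(a)=\varnothing$. Here Lemma \ref{majorationendehors} (applicable since $\varphi_x$ satisfies the Harish-Chandra estimates on $\mathcal Y=\Gamma x$) gives
\[
\frac{\langle b_x(\gamma x),\chi_A\rangle}{\varphi_x(\gamma x)}\le \frac{C_0\exp(a)}{d(x,\gamma x)}.
\]
So the far part of the sum is at most $C_0\exp(a)\sum_{\gamma}\psi_t(\gamma)/d(x,\gamma x)$, and I need this to tend to $0$ as $t\to+\infty$. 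This is where the hypotheses $\|\psi_t\|_1\le 1$ and $\psi_t(\gamma)\to 0$ pointwise are used: fix $\varepsilon>0$, choose $N$ so large that $C_0\exp(a)/N<\varepsilon$; the part of the sum over $\{\gamma: d(x,\gamma x)\ge N\}$ is then at most $\varepsilon\|\psi_t\|_1\le\varepsilon$, while the part over the finite set $\{\gamma: d(x,\gamma x)<N\}$ (finite because $\Gamma$ is discrete and $X$ proper) is a finite sum of terms each going to $0$, hence itself $\to 0$. Therefore $\limsup_t$ of the far part is $\le\varepsilon$ for every $\varepsilon$, so it is $0$.

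Combining the two pieces, for every $t$,
\[
\sum_{\gamma\in\Gamma}\psi_t(\gamma)\frac{\langle \pi_x(\gamma)\mathbf 1_{\partial X},\chi_A\rangle}{\phi_x(\gamma)}
\le \sum_{\gamma\in\Gamma}\psi_t(\gamma)\,D_{\gamma x}(\chi_{C_R(x,A(a))})
+C_0\exp(a)\!\!\sum_{\gamma:\,\gamma x\notin C_R(x,A(a))}\!\!\frac{\psi_t(\gamma)}{d(x,\gamma x)},
\]
and taking $\limsup_{t\to+\infty}$ kills the last term, giving the claim. The main obstacle is really just the bookkeeping in the far-terms estimate — making sure the split into "$d(x,\gamma x)$ large" and "$d(x,\gamma x)$ small" is uniform in $t$ via $\|\psi_t\|_1\le 1$, and invoking properness of $X$ to get finiteness of the small-distance set so that pointwise vanishing of $\psi_t$ suffices there. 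One should also note at the outset that, because $a$ enters only through the fixed constant $C_0\exp(a)$, the argument works for each fixed $a>0$ separately, which is exactly the form of the statement.
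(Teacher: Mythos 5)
Your proof is correct and follows essentially the same route as the paper: positivity bounds the ratio by the cone indicator on the ``near'' terms, Lemma \ref{majorationendehors} controls the terms with $\mathcal O_{R}(x,\gamma x)\cap A(a)=\varnothing$, and the resulting error is killed by combining $\|\psi_{t}\|_{1}\leq 1$ for the distant elements with pointwise vanishing of $\psi_{t}$ on the finitely many remaining ones. The paper merely organizes the same ingredients as a three-way partition $\Gamma_{1}\sqcup\Gamma_{2}\sqcup\Gamma_{3}$ with a threshold $t_{0}$ sent to infinity after the $\limsup$, which is your $N$ in different clothing.
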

\begin{proof}
Let $A$ be Borel subset of $\partial X$ and let $a$ be a positive number. Let $t_{0}$ be another positive real number. Consider the following partition of $\Gamma$: $$\Gamma=\Gamma_{1}\sqcup \Gamma_{2}\sqcup  \Gamma_{2}$$ with $$\Gamma_{1}=\lbrace\gamma \in \Gamma | d(x,\gamma  x)\leq t_{0} \rbrace$$ and  $$\Gamma_{2}=\lbrace \gamma \in \Gamma | \mathcal{O}_{R}(x,\gamma  x)\cap A(a)\neq \varnothing \rbrace \cap \Gamma_{1}^{c}$$ and  $$\Gamma_{3}=\lbrace \gamma \in \Gamma | \mathcal{O}_{R}(x,\gamma  x)\cap A(a)= \varnothing \rbrace \cap \Gamma_{1}^{c}.$$   

Since $\pi_{x}$ is positive, we have that $$\sum_{\Gamma_{1}}\psi_{t}(\gamma)\frac{\langle \pi_{x}(\gamma)\textbf{1}_{\partial X},\chi_{A}\rangle}{\phi_{x}(\gamma)} \leq \sum_{\Gamma_{1}}\psi_{t}(\gamma).$$

Observe that $$\gamma \in \Gamma_{2}\Leftrightarrow D_{\gamma  x}(\chi_{C_{R}(x,A(a))})=1.$$ Thus 
$$\sum_{\gamma \in \Gamma_{2}}\psi_{t}(\gamma)\frac{\langle \pi_{x}(\gamma)\textbf{1}_{\partial X},\chi_{A}\rangle}{\phi_{x}(\gamma)} \leq \sum_{\gamma \in \Gamma_{2} }\psi_{t}(\gamma) D_{\gamma  x}(\chi_{C_{R}(x,A(a))}). $$

Observe that $$\langle b_{x}(\gamma x),\chi_{A}\rangle=\langle \pi_{x}(\gamma)\textbf{1}_{\partial X},\chi_{A}\rangle.$$

Since $\mathcal{Y}=\Gamma x$ we can apply Lemma \ref{majorationendehors} via the above observation and thus for all $t_{0}>0$: $$\sum_{\Gamma_{3}} \psi_{t}(\gamma)\frac{\langle \pi_{x}(\gamma)\textbf{1}_{\partial X},\chi_{A}\rangle}{\phi_{x}(\gamma)}  \leq \bigg( \sum_{\Gamma}\psi_{t} (\gamma)  \bigg)C_{0}\frac{\exp(a)}{t_{0}}.$$ Then, since $\|\psi_{t}\|_{1}\leq 1$, we obtain
for all $t_{0}>0$ $$\sum_{\Gamma_{3}} \psi_{t}(\gamma)\frac{\langle \pi_{x}(\gamma)\textbf{1}_{\partial X},\chi_{A}\rangle}{\phi_{x}(\gamma)}  \leq C_{0}\frac{\exp(a)}{t_{0}}.$$

It follows that for all $a>0$ and for all $t>t_{0}$ we have 
$$
\sum_{\gamma \in \Gamma}\psi_{t}(\gamma)\frac{\langle \pi_{x}(\gamma)\textbf{1}_{\partial X},\chi_{A}\rangle}{\phi_{x}(\gamma)}\leq \sum_{\Gamma_{1}}\psi_{t}(\gamma)+\sum_{\Gamma}\psi_{t}(\gamma)D_{\gamma  x}(\chi_{C_{R}(x,A(a))})+C_{0}\frac{\exp(a)}{t_{0}}  .
$$

Since $\psi_{t}(\gamma)\rightarrow 0$ as $t\rightarrow +\infty $, we obtain by taking the $\limsup$ in the above inequality 
$$ \limsup_{t\rightarrow +\infty}\sum_{\gamma \in \Gamma}\psi_{t}(\gamma)\frac{\langle \pi_{x}(\gamma)\textbf{1}_{\partial X},\chi_{A}\rangle}{\phi_{x}(\gamma)}\leq \limsup_{t\rightarrow +\infty} \sum_{\gamma \in \Gamma }\psi_{t}(\gamma) D_{\gamma  x}(\chi_{C_{R}(x,A(a))})+C_{0}\frac{\exp(a)}{t_{0}} .$$
This inequality holds for all $t_{0}>0$, so we take $t_{0}\rightarrow +\infty$ and the proof is complete.
\end{proof}

\subsection{A consequence of Roblin's Theorem}\label{BMSroblin}
 If $A\subset \partial X$, we denote by $\partial A$ its frontier.
 We need a consequence of Theorem \ref{roblin} which counts the points of a $\Gamma$-orbit $\Gamma x$ in $C_{R}(x,A)$ when $A$ is a Borel subset with $\mu_{x}(\partial A)=0$. This is based on the regularity of the conformal densities. 
We recall that the topology of $\overline{X}$ is compatible with the metric topology defined on $\partial X$ by the visual metrics (see \cite[\S 1.5]{Bou}). If $O\subset \overline{X}$, we denote by $\overline{O}$ its closure in $\overline{X}$.

First, observe the following:
\begin{lemma}\label{lemmferme}
Let $A$ be a closed subset of $\partial X$. Then $\overline{C_{R}(x,A)}=C_{R}(x,A)\sqcup A$ .
\end{lemma}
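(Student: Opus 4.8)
The statement to be proved is Lemma~\ref{lemmferme}: for a closed subset $A\subset\partial X$, one has $\overline{C_{R}(x,A)}=C_{R}(x,A)\sqcup A$, the closure being taken in $\overline{X}$. The cone $C_{R}(x,A)$ consists of points $y\in X$ whose shadow $\mathcal{O}_{R}(x,y)$ meets $A$; equivalently, $y$ lies within distance $R$ of some geodesic ray $[xv)$ with $v\in A$. The plan is to prove the two inclusions $\overline{C_{R}(x,A)}\subseteq C_{R}(x,A)\cup A$ and $C_{R}(x,A)\cup A\subseteq \overline{C_{R}(x,A)}$, and to check the union is disjoint (which is immediate since $C_{R}(x,A)\subseteq X$ and $A\subseteq\partial X$, and $X\cap\partial X=\varnothing$).

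\textbf{The easy inclusion.} For $C_{R}(x,A)\cup A\subseteq \overline{C_{R}(x,A)}$: the part $C_{R}(x,A)$ is trivial. For $A\subseteq\overline{C_{R}(x,A)}$, take $v\in A$, let $r$ be the geodesic ray $[xv)$, and observe that $r(t)\in C_{R}(x,A)$ for every $t\geq 0$ since $r(t)$ lies on $[xv)$ itself (distance $0<R$ from it). Since $r(t)\to v$ in $\overline{X}$ as $t\to+\infty$, we get $v\in\overline{C_{R}(x,A)}$.

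\textbf{The hard inclusion.} For $\overline{C_{R}(x,A)}\subseteq C_{R}(x,A)\cup A$, take a sequence $(y_n)$ in $C_{R}(x,A)$ with $y_n\to \zeta\in\overline{X}$ and show $\zeta\in C_{R}(x,A)\cup A$. For each $n$ pick $v_n\in A$ with $[xv_n)$ passing within $R$ of $y_n$; let $w_n$ be a point on $[xv_n)$ with $d(y_n,w_n)\leq R$. By compactness of $\partial X$ (hence of $A$, since $A$ is closed) pass to a subsequence so that $v_n\to v\in A$. Now split into two cases according to whether $(y_n)$ stays bounded or escapes to infinity. If $d(x,y_n)$ is bounded, then so is $d(x,w_n)$, and by properness of $X$ and the Arzela--Ascoli description of $\mathcal{R}(x)$ in the excerpt, the geodesic segments/rays $[xv_n)$ converge (on compacta) to $[xv)$; passing to a further subsequence $w_n\to w\in[xv)$ and $y_n\to\zeta\in X$ with $d(\zeta,w)\leq R$, so $\zeta\in C_{R}(x,A)$. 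If $d(x,y_n)\to+\infty$, then $\zeta\in\partial X$; here I would use the convergence $[xv_n)\to[xv)$ again together with $d(y_n,w_n)\leq R$, the thin-triangle/$\delta$-hyperbolicity estimate (\ref{hyperbolic}), and the fact that the Gromov product $(y_n,v)_x\to+\infty$ forces $y_n\to v$ in $\overline{X}$, whence $\zeta=v\in A$. Concretely, $d(y_n,w_n)\le R$ with $w_n\in[xv_n)$ and $d(x,y_n)\to\infty$ gives $(y_n,v_n)_x\geq d(x,w_n)-R\to\infty$, and $v_n\to v$ together with continuity of the extended Gromov product yields $(y_n,v)_x\to\infty$, i.e.\ $y_n\to v$.

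\textbf{Main obstacle.} The delicate point is the case $d(x,y_n)\to\infty$: one must be careful that ``$y_n$ within $R$ of $[xv_n)$'' with $v_n\to v$ really forces the limit of $y_n$ to be $v$ and not some other boundary point; this is exactly where $\delta$-hyperbolicity (the extension of (\ref{hyperbolic}) to $\overline X$) and the continuity of the Gromov product on $\overline X$ are needed, and one should take the subsequence extractions in the right order (first $v_n\to v$, then the geodesics converge, then control $w_n$ and $y_n$). Everything else is a routine compactness argument using properness of $X$ and closedness of $A$.
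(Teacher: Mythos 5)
Your proof is correct and, for the essential (boundary) case, uses exactly the same mechanism as the paper: the point $v_n\in A\cap\mathcal{O}_R(x,y_n)$ satisfies $(y_n,v_n)_x\geq d(x,y_n)-R$, and the $\delta$-hyperbolicity inequality (\ref{hyperbolic}) then ties the limit of $(y_n)$ to the limit of $(v_n)$ in $A$; you merely run the implication in the opposite order (compactness of $A$ first, then forcing $y_n\to v$) where the paper deduces $v_n\to v$ from $y_n\to v$ and invokes closedness of $A$. Your explicit treatment of the bounded case (showing $C_R(x,A)$ is closed in $X$ via properness and Arzel\`a--Ascoli) fills in a step the paper dismisses with ``otherwise there is nothing to do,'' which is a welcome addition rather than a divergence.
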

\begin{proof}
It is easy to check that $C_{R}(x,A)\cup A \subset \overline{C_{R}(x,A)}$.

Now, assume that $v \in \overline{C_{R}(x,A)}\cap \partial X$ (otherwise there is nothing to do). We shall prove that $v \in A$. There exists a sequence of $y_{n} \in C_{R}(x,A)$ such that $y_{n} \rightarrow v$ with respect to  the topology of $\overline{X}$. Since $y_{n}$ is in  $C_{R}(x,A)$, there exists $v_{n} \in A\cap\mathcal{O} _{R}(x,y_{n})$ such that $(y_{n},v_{n})_{x}\geq d(x,y_{n})-R$,  for all integers $n$. Thus, we have 
\begin{align*}
(v_{n},v)_{x}&\geq \min{\lbrace(v_{n},y_{n})_{x},(y_{n},v)_{x} \rbrace}-\delta \\
& \geq (y_{n},v)_{x}-R-\delta.
\end{align*}
where the last inequality follows from $(y_{n},v)_{x}\leq d(x,y_{n})$.
Since $y_{n} \rightarrow v$, it follows that $(y_{n},v)_{x}$ goes to $+\infty$, and so $v_{n}\rightarrow v$ with respect to  $d_{x}$. Since $A$ is closed the proof is done.
 \end{proof}
Then we shall give a proof the following corollary:
\begin{coro}\label{cororoblin}(Extracted from \cite[Th\'eor\`eme 4.1.1, Chapitre 4]{Ro})
Let $\Gamma$ be a discrete group of isometries of $X$ with a non-arithmetic spectrum. Assume that $\Gamma$ admits a finite BMS measure associated with a $\Gamma$-invariant conformal density $\mu$ of dimension $\alpha=\alpha(\Gamma)$. Let $A,B$ be two Borel subsets such that $\mu_{x}(\partial A)=0=\mu_{x}(\partial B)$. Then for all $\rho>0$ we have
$$ \limsup_{n \rightarrow +\infty}\frac{1}{|C_{n}(x,\rho)|}\sum_{\gamma \in C_{n}(x,\rho)}D_{\gamma^{-1}  x } \otimes D_{\gamma  x} (\chi_{C_{R}(x,A)} \otimes \chi_{C_{R}(x,B)}) \leq  \frac{\mu_{x}(A)\mu_{x}(B)}{\| \mu \|_{x}^{2}}.$$ 
\end{coro}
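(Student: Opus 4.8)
The plan is to deduce the corollary from Roblin's Theorem (Theorem \ref{roblin}) by an approximation argument using the regularity hypothesis $\mu_x(\partial A) = 0 = \mu_x(\partial B)$. First I would reduce the $\limsup$ over the annuli $C_n(x,\rho)$ to the information coming from the balls $\Gamma_n(x)$: since $C_n(x,\rho) = \Gamma_{n+\rho}(x) \setminus \Gamma_{n-\rho}(x)$, the sum over $C_n(x,\rho)$ is a difference of two sums over balls, and by the asymptotics \eqref{estimcouronne} the normalization $|C_n(x,\rho)| \sim (2\sinh(\alpha\rho)) e^{\alpha n}\|\mu_x\|^2 / (\alpha\|m_\Gamma\|)$ matches the difference of normalizations $e^{\alpha(n+\rho)} - e^{\alpha(n-\rho)}$ up to the constant $\|\mu_x\|^2/(\alpha\|m_\Gamma\|)$. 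So after dividing, the $C_n(x,\rho)$-average converges to the same limit that the ball-averages produce, and Theorem \ref{roblin} applied to test functions on $\overline{X}\times\overline{X}$ gives the limit $\mu_x\otimes\mu_x / \|m_\Gamma\|$, i.e. after renormalizing by $\|\mu_x\|^2$ the limit $\mu_x(\cdot)\mu_x(\cdot)/\|\mu_x\|^2$ — note the roles of $D_{\gamma^{-1}x}$ and $D_{\gamma x}$ in Roblin's statement are exactly those appearing here.

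The second and genuinely substantive step is that $\chi_{C_R(x,A)}$ and $\chi_{C_R(x,B)}$ are not continuous on $\overline{X}$, so Theorem \ref{roblin} cannot be applied to them directly; one must sandwich them between continuous functions. Here Lemma \ref{lemmferme} is the key input: for $A$ closed, $\overline{C_R(x,A)} = C_R(x,A)\sqcup A$ in $\overline{X}$. The plan is: given a Borel $A$ with $\mu_x(\partial A)=0$, replace $A$ by its closure $\overline{A}$ (this only enlarges $C_R(x,A)$, so gives an upper bound, and $\mu_x(\overline A) = \mu_x(A)$ since $\mu_x(\partial A) = 0$). Then $C_R(x,\overline A)$ has closure $C_R(x,\overline A)\sqcup \overline A$, a closed subset of $\overline X$, so by Urysohn one can find continuous $f_k \in C(\overline X)$ with $f_k \downarrow \chi_{\overline{C_R(x,\overline A)\sqcup\overline A}} \geq \chi_{C_R(x,\overline A)}$. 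Applying Theorem \ref{roblin} (via the reduction in the first paragraph) to $f_k\otimes g_\ell$, where $g_\ell$ is the analogous family for $B$, yields
$$
\limsup_{n\to\infty}\frac{1}{|C_n(x,\rho)|}\sum_{\gamma\in C_n(x,\rho)} D_{\gamma^{-1}x}\otimes D_{\gamma x}(\chi_{C_R(x,A)}\otimes\chi_{C_R(x,B)})
\leq \frac{1}{\|\mu_x\|^2}\int f_k\,d\mu_x \int g_\ell\,d\mu_x.
$$
Letting $k,\ell\to\infty$ and using monotone convergence, the right-hand side tends to $\mu_x(\overline{C_R(x,\overline A)}\sqcup\overline A)\,\mu_x(\overline{C_R(x,\overline B)}\sqcup\overline B)/\|\mu_x\|^2$. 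The final point is that $C_R(x,\overline A)\subset X$ contributes nothing to $\mu_x$ (which is supported on $\partial X$), so $\mu_x(\overline{C_R(x,\overline A)}\sqcup\overline A) = \mu_x(\overline A) = \mu_x(A)$, giving the claimed bound.

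The main obstacle I anticipate is bookkeeping the normalization constants correctly through the passage from Roblin's ball-count (with its factor $\alpha e^{-\alpha n}\|m_\Gamma\|$) to the annulus-average (with denominator $|C_n(x,\rho)|$), making sure the $\|m_\Gamma\|$ and $\|\mu_x\|^2$ factors cancel to leave exactly $\mu_x(A)\mu_x(B)/\|\mu_x\|^2$; this is where \eqref{estimcouronne} and the $\limsup$ (rather than $\lim$, since the difference of two convergent ball-averages need not converge but its $\limsup$ is controlled) must be handled with care. A secondary technical point is checking that $\partial\big(C_R(x,A)\big)$, as a subset of $\overline X$, is contained in $\partial A \cup (\text{something of zero }\mu_x\text{-measure})$ — but this is exactly what Lemma \ref{lemmferme} is designed to sidestep, since we only need the one-sided (upper) bound and can afford to pass to closures freely.
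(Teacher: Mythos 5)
Your proposal is correct and follows essentially the same route as the paper: reduce the annulus average to a difference of ball averages via the asymptotics \eqref{estimcouronne} and Theorem \ref{roblin}, pass to $\overline{A}$ and use Lemma \ref{lemmferme} to identify $\overline{C_{R}(x,\overline{A})}$, then dominate $\chi_{C_{R}(x,A)}$ by continuous Urysohn functions and let the approximation tighten. The only cosmetic difference is that you use a decreasing sequence $f_k\downarrow\chi_{\overline{C_R(x,\overline A)}}$ with monotone convergence, while the paper fixes $\epsilon>0$, picks an open $O_A\supset\overline{C_R(x,\overline A)}$ with $\mu_x(O_A)\leq\mu_x(A)+\epsilon$ by outer regularity, and uses a single Urysohn function squeezed between them — these are interchangeable.
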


 \begin{proof}
 Let $\rho$ be a positive real number.
 We have for all $n$ large enough:
\begin{align*}
 \frac{1}{|C_{n}(x,\rho)|} &\sum_{ \gamma \in C_{n}(x,\rho)} D_{\gamma^{-1}  x} \otimes D_{\gamma  x} 
= \frac{\alpha \|m_{\Gamma} \| \exp(-\alpha (n+\rho))}{|C_{n}(x,\rho)|\alpha \|m_{\Gamma}\| \exp(-\alpha (n+\rho))}  \sum_{ \gamma \in \Gamma_{n+\rho}(x)} D_{\gamma^{-1}  x} \otimes D_{\gamma  x} \\ 
 &-  \frac{\alpha \|m_{\Gamma}\| \exp(-\alpha (n-\rho))}{|C_{n}(x,\rho)|\alpha \|m_{\Gamma}\| \exp(-\alpha (n-\rho))}  \sum_{ \gamma \in \Gamma_{n-\rho}(x)} D_{\gamma^{-1}  x} \otimes D_{\gamma  x} .
  \end{align*}
  
  The estimation (\ref{estimcouronne}) for annulii implies as $n\rightarrow +\infty $  $$|C_{n}(x,\rho)|\alpha \|m_{\Gamma}\| \exp(-\alpha (n+\rho))  \sim 2\sinh(\alpha \rho)\exp{(-\alpha \rho)}\|\mu \|^{2}_{x} $$ and 
  $$ |C_{n}(x,\rho)|\alpha \|m_{\Gamma}\| \exp(-\alpha (n-\rho)) \sim 2\sinh(\alpha \rho)\exp{(\alpha \rho)}\|\mu \|^{2}_{x} .$$
Therefore Theorem \ref{roblin} implies
\begin{equation}\label{conseq1roblin}
 \frac{1}{|C_{n}(x,\rho)|} \sum_{ \gamma \in C_{n}(x,\rho)} D_{\gamma^{-1}  x} \otimes D_{\gamma  x} \rightharpoonup
 \frac{1}{\|\mu_{x} \|^{2}}\mu_{x} \otimes \mu_{x},
  \end{equation}
with respect to  the weak* topology of $C(\overline{X}\times \overline{X})^{*}$.

Consider a Borel subset $A$ of $\partial X$ such that $\mu_{x}(\partial A)=0$. 
We have $\mu_{x}(\overline{A})=\mu_{x}(A).$ Thus, by Lemma \ref{lemmferme} we obtain
\begin{align*}
\mu_{x}(\overline{C_{R}(x,\overline{A})})&=\mu_{x}(A).
\end{align*} 

  Let $\epsilon>0$. Since $\mu_{x}$ is a regular measure there exists an open subset $O_{A}$ of $\overline {X}$ such that 

 \begin{equation}\label{mesurtopo}
\overline{C_{R}(x,\overline{A})} \subset O_{A} \mbox{ and } \mu_{x}(O_{A}) \leq \mu_{x}(A)+ \epsilon.
 \end{equation}
 
The subset $\overline{C_{R}(x,\overline{A})}$ is a compact subset of $\overline{X}$.
By Urysohn's lemma, we can find a compactly supported function $f_{O_{A}}$ such that  $$ \chi_{\overline{C_{R}(x,\overline{A})}}\leq f_{O_{A}}\leq \chi_{O_{A}}.$$

Let $B$ be another Borel subset such that $\mu_{x}(\partial B)=0$.  Let $f_{O_{B}}$ be the continuous function given by the above construction. Notice that for  all $n$ we have:
$$\sum_{\gamma \in C_{n}(x,\rho)}D_{\gamma x} \otimes D_{\gamma^{-1} x}(\chi_{C_{r}(x,A)}\otimes \chi_{C_{r}(x,B)})\leq\sum_{\gamma \in C_{n}(x,\rho)}D_{\gamma x} \otimes D_{\gamma^{-1} x}(f_{O_{A}}\otimes f_{O_{B}}). $$

 The consequence of Roblin's theorem (\ref{conseq1roblin}) implies:
\begin{align*}
&\limsup_{n\rightarrow \infty}\frac{\| \mu_{x}\|^{2}}{|C_{n}(x,\rho)|}\sum_{\gamma \in C_{n}(x,\rho)}D_{\gamma x} \otimes D_{\gamma^{-1} x}(\chi_{C_{R}(x,A)}\otimes \chi_{ C_{R}(x,B)}) \\
&\leq \limsup_{n\rightarrow \infty}\frac{\| \mu_{x}\|^{2}}{|C_{n}(x,\rho)|}\sum_{\gamma \in C_{n}(x,\rho)}D_{\gamma x} \otimes D_{\gamma^{-1} x}(f_{O_{A}}\otimes f_{O_{B}})\\
&=\lim_{n\rightarrow \infty}\frac{\| \mu_{x}\|^{2}}{|C_{n}(x,\rho)|}\sum_{\gamma \in C_{n}(x,\rho)}D_{\gamma x} \otimes D_{\gamma^{-1} x}(f_{O_{A}}\otimes f_{O_{B}})\\
&=\int_{\partial X \times \partial X}(f_{O_{A}}\otimes f_{O_{B}}) d \mu_{x} \otimes d\mu_{x}\\
&\leq \mu_{x}(A)\mu_{x}(B)+\epsilon(\mu_{x}(A)+\mu_{x}(B))+\epsilon^{2},
\end{align*}
where the last inequality follows from (\ref{mesurtopo}).
The above inequality holds for all $\epsilon>0$, so the proof is done.
 \end{proof}

\subsection{An application of Roblin's equidistribution Theorem}
Let $\rho>0$, and let $N_{x,\rho}$ be an integer such that for all $n \geq N_{x,\rho}$ the sequence $\mathcal{M}_{x,\rho}^{n}$ is well defined. The purpose of this section is to use Corollary \ref{cororoblin} for computing the limit of the sequence of operator-valued measures $(\mathcal{M}_{x,\rho}^{n})_{n \geq N_{x,\rho}}$.\\
 We assume that $\varphi_{x}$ satisfies the left hand side of Harish-Chandra estimates on $\Gamma x$.

\begin{prop}\label{applicationroblin}
Let $A,B,U \subset \partial {X}$ be Borel subsets such that $
\mu_{x}(\partial A)=\mu_{x}(\partial B)=\mu_{x}(\partial U)=0$, let $\widehat{U}=C_{R}(x,U)\cup U$ be a borel subset of $\overline{X}$. Then we have:
$$\lim_{n\rightarrow +\infty} \langle \mathcal{M}^{n}_{x,\rho}(\chi_{\widehat{U}})\chi_{A},\chi_{B} \rangle=\frac{\mu_{x}(U \cap B)\mu_{x}(A)}{ \| \mu_{x} \|^{2}}.$$

\end{prop}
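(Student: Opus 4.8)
The plan is to sandwich the matrix coefficient $\langle\mathcal M^n_{x,\rho}(\chi_{\widehat U})\chi_A,\chi_B\rangle$ between quantities controlled by Corollary \ref{cororoblin}, and then to let the neighbourhood parameter $a$ tend to $+\infty$. First I would record the reduction: since $\widehat U\cap\Gamma x=C_R(x,U)\cap\Gamma x$, one has $D_{\gamma x}(\chi_{\widehat U})=\chi_{C_R(x,U)}(\gamma x)$, so $\langle\mathcal M^n_{x,\rho}(\chi_{\widehat U})\chi_A,\chi_B\rangle=\frac{1}{|C_n(x,\rho)|}\sum_{\gamma\in C_n(x,\rho)}\chi_{C_R(x,U)}(\gamma x)\,\frac{\langle\pi_x(\gamma)\chi_A,\chi_B\rangle}{\phi_x(\gamma)}$. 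Next I would establish two identities that make Roblin's backward point $\gamma^{-1}x$ appear: since $\pi_x$ is unitary, $\phi_x(\gamma)=\phi_x(\gamma^{-1})$; and a change of variables together with (\ref{Bus4}), (\ref{Bus1}) and the conformality of $\mu$ gives $\langle b_x(\gamma x),\chi_{\gamma A}\rangle=\langle b_x(\gamma^{-1}x),\chi_A\rangle$, i.e.\ $\nu_{\gamma x}(\gamma A)=\nu_{\gamma^{-1}x}(A)$. Writing $\langle\pi_x(\gamma)\chi_A,\chi_B\rangle=\int_{\partial X}b_x(\gamma x)^{1/2}\chi_A(\gamma^{-1}v)\cdot b_x(\gamma x)^{1/2}\chi_B(v)\,d\mu_x(v)$, the Cauchy--Schwarz inequality then yields the key decoupling $\frac{\langle\pi_x(\gamma)\chi_A,\chi_B\rangle}{\phi_x(\gamma)}\le\sqrt{\nu_{\gamma^{-1}x}(A)}\,\sqrt{\nu_{\gamma x}(B)}$.

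For the upper bound, fix $a>0$. Applying Lemma \ref{majorationendehors} (whose proof uses only the left-hand Harish--Chandra estimate, assumed on $\Gamma x$) to $A$ at $\gamma^{-1}x$ and to $B$ at $\gamma x$, and using $\nu_{\gamma^{\pm1}x}(\cdot)\le1$, one gets $\nu_{\gamma^{-1}x}(A)\le\chi_{C_R(x,A(a))}(\gamma^{-1}x)+\frac{C_0e^a}{d(x,\gamma^{-1}x)}$ and $\nu_{\gamma x}(B)\le\chi_{C_R(x,B(a))}(\gamma x)+\frac{C_0e^a}{d(x,\gamma x)}$. Since $d(x,\gamma^{\pm1}x)=d(x,\gamma x)\ge n-\rho$ for $\gamma\in C_n(x,\rho)$, the error terms are $\le\epsilon_n:=C_0e^a/(n-\rho)\to0$. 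Multiplying, taking square roots (the product of the two $0$--$1$ indicators is their product), and multiplying by $\chi_{C_R(x,U)}(\gamma x)$, one obtains $\chi_{C_R(x,U)}(\gamma x)\,\frac{\langle\pi_x(\gamma)\chi_A,\chi_B\rangle}{\phi_x(\gamma)}\le\chi_{C_R(x,A(a))}(\gamma^{-1}x)\,\chi_{C_R(x,U)\cap C_R(x,B(a))}(\gamma x)+\eta_n(a)$ with $\eta_n(a)=\sqrt{2\epsilon_n+\epsilon_n^2}\to0$. Because the visual diameter of $\mathcal O_R(x,y)$ is $\le e^{R+\delta}e^{-d(x,y)}$ (by the hyperbolicity (\ref{hyperbolic}) and (\ref{distance})), for $n$ large enough a point of $\mathcal O_R(x,\gamma x)$ lying in $U$ is at visual distance $<2e^{-a}$ from $B$, so $C_R(x,U)\cap C_R(x,B(a))\cap\Gamma x\subset C_R(x,U\cap B(a'))$ with $a'=a-\log2$. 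Summing over $\gamma\in C_n(x,\rho)$, dividing by $|C_n(x,\rho)|$, letting $n\to\infty$ and invoking Corollary \ref{cororoblin} (legitimate for all but countably many $a$, for which $\mu_x(\partial A(a))=0=\mu_x(\partial B(a'))$; recall $\mu_x(\partial U)=0$), I get $\limsup_n\langle\mathcal M^n_{x,\rho}(\chi_{\widehat U})\chi_A,\chi_B\rangle\le\frac{\mu_x(A(a))\,\mu_x(U\cap B(a'))}{\|\mu_x\|^2}$. Letting $a\to+\infty$ along good values and using $\bigcap_aA(a)=\overline A$, $\bigcap_aB(a)=\overline B$, $\mu_x(\partial A)=\mu_x(\partial B)=0$ and continuity from above, the right side tends to $\frac{\mu_x(A)\mu_x(U\cap B)}{\|\mu_x\|^2}$.

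The argument above used nothing about $A$ beyond $\mu_x(\partial A)=0$, so it applies verbatim to $A^c$, giving $\limsup_n\langle\mathcal M^n_{x,\rho}(\chi_{\widehat U})\chi_{A^c},\chi_B\rangle\le\frac{\mu_x(A^c)\mu_x(U\cap B)}{\|\mu_x\|^2}$. Running it with the $A$-factor deleted (equivalently $A=\partial X$, so Cauchy--Schwarz is vacuous and only $\nu_{\gamma x}(B)$ survives) gives $\limsup_n\langle\mathcal M^n_{x,\rho}(\chi_{\widehat U})\textbf{1}_{\partial X},\chi_B\rangle\le\frac{\mu_x(U\cap B)}{\|\mu_x\|}$, and likewise for $\chi_{B^c}$; since $\langle\pi_x(\gamma)\textbf{1}_{\partial X},\textbf{1}_{\partial X}\rangle=\phi_x(\gamma)$ we have $\langle\mathcal M^n_{x,\rho}(\chi_{\widehat U})\textbf{1}_{\partial X},\chi_B\rangle+\langle\mathcal M^n_{x,\rho}(\chi_{\widehat U})\textbf{1}_{\partial X},\chi_{B^c}\rangle=\frac{1}{|C_n(x,\rho)|}\sum_{\gamma\in C_n(x,\rho)}\chi_{C_R(x,U)}(\gamma x)$, which converges to $\frac{\mu_x(U)}{\|\mu_x\|}$ by (\ref{conseq1roblin}) and Lemma \ref{lemmferme} (a portmanteau argument from the proof of Corollary \ref{cororoblin}, using $\mu_x(\partial U)=0$ and the visual-diameter bound to kill the ``straddle'' set). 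Two $\limsup$'s bounded by numbers summing to the limit of their sum are genuine limits, so in particular $\langle\mathcal M^n_{x,\rho}(\chi_{\widehat U})\textbf{1}_{\partial X},\chi_B\rangle\to\frac{\mu_x(U\cap B)}{\|\mu_x\|}$. Finally, from $\chi_A+\chi_{A^c}=\textbf{1}_{\partial X}$ and linearity of $\mathcal M^n_{x,\rho}(\chi_{\widehat U})$, $\liminf_n\langle\mathcal M^n_{x,\rho}(\chi_{\widehat U})\chi_A,\chi_B\rangle\ge\frac{\mu_x(U\cap B)}{\|\mu_x\|}-\frac{\mu_x(A^c)\mu_x(U\cap B)}{\|\mu_x\|^2}=\frac{\mu_x(A)\mu_x(U\cap B)}{\|\mu_x\|^2}$, matching the $\limsup$ and proving the claimed limit.

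The main obstacle I anticipate is the decoupling step and the bookkeeping around it: converting the two one-sided estimates of Lemma \ref{majorationendehors} into a clean majorant by a product of cone-indicators at $\gamma^{-1}x$ and $\gamma x$, with errors that are genuinely uniform over $\gamma\in C_n(x,\rho)$, and justifying the interchange of the $n\to\infty$ and $a\to\infty$ limits (which forces one to restrict to a cofinal set of ``good'' radii $a$ where the relevant frontiers are $\mu_x$-null). The two algebraic identities $\phi_x(\gamma)=\phi_x(\gamma^{-1})$ and $\nu_{\gamma x}(\gamma A)=\nu_{\gamma^{-1}x}(A)$, elementary as they are, are exactly what ties the estimate to the two-sided sum $\sum D_{\gamma^{-1}x}\otimes D_{\gamma x}$ of Roblin's theorem.
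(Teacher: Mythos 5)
Your argument is correct, and it reaches Corollary \ref{cororoblin} by a genuinely different route than the paper. Where you decouple the matrix coefficient by Cauchy--Schwarz, writing $\frac{\langle \pi_{x}(\gamma)\chi_{A},\chi_{B}\rangle}{\phi_{x}(\gamma)}\leq \sqrt{\nu_{\gamma^{-1}x}(A)}\sqrt{\nu_{\gamma x}(B)}$ via the identities $\phi_{x}(\gamma)=\phi_{x}(\gamma^{-1})$ and $\langle b_{x}(\gamma x),\chi_{\gamma A}\rangle=\langle b_{x}(\gamma^{-1}x),\chi_{A}\rangle$, and then majorize each factor pointwise by a cone indicator plus an $O(1/(n-\rho))$ error (Lemma \ref{majorationendehors}), the paper instead splits $\widehat{U}=\widehat{U_{1}}\sqcup\widehat{U_{2}}$ according to whether $\gamma x$ lies in $\overline{C_{R}(x,B(b))}$, uses positivity to replace $\chi_{A}$ by $\textbf{1}_{\partial X}$ on one piece and $\chi_{B}$ by $\textbf{1}_{\partial X}$ on the other, and invokes Proposition \ref{deform} twice (Lemmas \ref{lemprepar1} and \ref{lemprepar2}); both routes land on the same two-variable Roblin quantity $\frac{1}{|C_{n}|}\sum D_{\gamma^{-1}x}\otimes D_{\gamma x}$ evaluated on cone indicators, so the upper bounds agree. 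Your Cauchy--Schwarz step is arguably more transparent about \emph{why} the backward orbit point $\gamma^{-1}x$ appears, at the cost of needing the error terms uniform over the annulus (which you correctly get from $d(x,\gamma^{\pm 1}x)\geq n-\rho$); the paper's positivity trick avoids any quantitative error bookkeeping. For the lower bound you also economize: the paper expands $1=\sum_{i,j,k}\langle\mathcal{M}^{n}(\chi_{\widehat{U}^{i}})\chi_{A^{j}},\chi_{B^{k}}\rangle$ over all eight complements, whereas you complement only in $A$ and $B$ and feed in the convergence of the diagonal term $\frac{1}{|C_{n}|}\sum\chi_{C_{R}(x,U)}(\gamma x)\to\mu_{x}(U)/\|\mu_{x}\|$. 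That last convergence is the one place you should write out: Corollary \ref{cororoblin} only gives the $\limsup$ inequality, and the matching $\liminf$ is most cleanly obtained not by portmanteau but by noting that nonemptiness of shadows gives $X\setminus C_{R}(x,U)\subset C_{R}(x,U^{c})$, so $\liminf_{n}\frac{1}{|C_{n}|}\sum\chi_{C_{R}(x,U)}(\gamma x)\geq 1-\mu_{x}(U^{c})/\|\mu_{x}\|=\mu_{x}(U)/\|\mu_{x}\|$ since $\mu_{x}(\partial U^{c})=0$. With that two-line insertion your proof is complete.
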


We need some lemmas to prepare the proof of this proposition.

\begin{lemma}\label{lemprepar1}
Let $B,U \subset \partial {X}$ be Borel subsets such that $
\mu_{x}(\partial B)=\mu_{x}(\partial U)=0$, let $\widehat{U}=C_{R}(x,U)\cup U$ be a borel subset of $\overline{X}$ satisfying $U\cap B(b)=\varnothing$, for some $b>0$. Then we have $$\limsup_{n \rightarrow +\infty} \langle \mathcal{M} ^{n}_{x,\rho} (\chi_{\widehat{U}} ) \textbf{1}_{\partial X} , \chi_{B} \rangle =0.$$
\end{lemma}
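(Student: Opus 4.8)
The plan is to unwind the definition of $\mathcal{M}^{n}_{x,\rho}$, recognize that testing it against $\textbf{1}_{\partial X}$ and pairing with $\chi_B$ produces precisely an average of the matrix coefficients $\langle \pi_x(\gamma)\textbf{1}_{\partial X},\chi_B\rangle/\phi_x(\gamma)$ weighted by $D_{\gamma x}(\chi_{\widehat U})$, and then to bound this by a sum over $\Gamma$ to which Proposition \ref{deform} applies. Concretely, set $\psi_n(\gamma) = \frac{1}{|C_n(x,\rho)|}\chi_{C_n(x,\rho)}(\gamma)\, D_{\gamma x}(\chi_{\widehat U})$; then $\psi_n \in \ell^1(\Gamma)$, $\|\psi_n\|_1 \le 1$, and for each fixed $\gamma$ we have $\psi_n(\gamma)\to 0$ as $n\to\infty$ since $\gamma$ eventually leaves every annulus $C_n(x,\rho)$. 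Using positivity of $\pi_x$ and the identity $\langle b_x(\gamma x),\chi_B\rangle = \langle \pi_x(\gamma)\textbf{1}_{\partial X},\chi_B\rangle$, the quantity in question is exactly $\sum_{\gamma\in\Gamma}\psi_n(\gamma)\frac{\langle \pi_x(\gamma)\textbf{1}_{\partial X},\chi_B\rangle}{\phi_x(\gamma)}$.

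Next I would invoke Proposition \ref{deform} with this $\psi_n$ and the Borel set $A = B$, to obtain, for every $a>0$,
\[
\limsup_{n\to\infty}\sum_{\gamma\in\Gamma}\psi_n(\gamma)\frac{\langle \pi_x(\gamma)\textbf{1}_{\partial X},\chi_B\rangle}{\phi_x(\gamma)} \le \limsup_{n\to\infty}\sum_{\gamma\in\Gamma}\psi_n(\gamma)\, D_{\gamma x}(\chi_{C_R(x,B(a))}).
\]
Unpacking $\psi_n$, the right-hand side equals
\[
\limsup_{n\to\infty}\frac{1}{|C_n(x,\rho)|}\sum_{\gamma\in C_n(x,\rho)} D_{\gamma x}(\chi_{\widehat U})\, D_{\gamma x}(\chi_{C_R(x,B(a))}) = \limsup_{n\to\infty}\frac{1}{|C_n(x,\rho)|}\sum_{\gamma\in C_n(x,\rho)} D_{\gamma x}(\chi_{\widehat U \cap C_R(x,B(a))}).
\]

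Now I would exploit the hypothesis $U\cap B(b)=\varnothing$ for some $b>0$: choosing $a<b$ (say $a = b/2$), one checks that $\widehat U = C_R(x,U)\cup U$ and $C_R(x,B(a))$ are essentially disjoint up to a controlled error — more precisely, any $y\in C_R(x,U)$ has a shadow point in $U$, and if $y$ were also in $C_R(x,B(a))$ its shadow would meet $B(a)$, forcing (via the Gromov-product/shadow estimates of Lemma \ref{encadrement buseman dur}) a point of $U$ within bounded $d_x$-distance of $B(a)$, hence within $\exp(-b)$ of $B$ once $d(x,y)$ is large, contradicting $U\cap B(b)=\varnothing$. So for $n$ large the intersection $C_n(x,\rho)\cap \{\gamma : \gamma x \in \widehat U \cap C_R(x,B(a))\}$ is empty (the finitely many small-$d(x,\gamma x)$ exceptions drop out of the annulus), and the $\limsup$ on the right is $0$. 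Since the left-hand side is a $\limsup$ of nonnegative terms bounded above by $0$, it is $0$, which is the claim. The main obstacle I anticipate is making the "essentially disjoint" step precise: one must quantify, using $R$, $\delta$ and the relation (\ref{distance}) between Gromov products and the visual metric, exactly how large $d(x,\gamma x)$ must be before membership of $\gamma x$ in both cones becomes impossible, and confirm that the finitely many short orbit points are irrelevant because they leave every annulus $C_n(x,\rho)$ for $n$ large.
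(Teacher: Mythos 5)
Your reduction is exactly the paper's: you set $\psi_{n}(\gamma)=\frac{1}{|C_{n}(x,\rho)|}\chi_{C_{n}(x,\rho)}(\gamma)D_{\gamma x}(\chi_{\widehat{U}})$, observe $\|\psi_{n}\|_{1}\leq 1$ and $\psi_{n}(\gamma)\to 0$ pointwise, and apply Proposition \ref{deform} with $A=B$ to bound the $\limsup$ by $\limsup_{n}\frac{1}{|C_{n}(x,\rho)|}\sum_{\gamma\in C_{n}(x,\rho)}D_{\gamma x}(\chi_{\widehat{U}\cap C_{R}(x,B(a))})$. From there the paper passes to $C_{R}(x,U\cap B(b))$ and invokes Corollary \ref{cororoblin}, which gives the bound $\mu_{x}(U\cap B(b))/\|\mu_{x}\|=0$; you instead argue directly that the two cones are disjoint far from $x$, which is a legitimate and arguably more self-contained alternative, since by Lemma \ref{encadrement buseman dur} two boundary points in the same shadow $\mathcal{O}_{R}(x,\gamma x)$ are at $d_{x}$-distance at most $e^{2R+\delta}e^{-d(x,\gamma x)}$.

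However, your choice of the parameter $a$ goes in the wrong direction. Recall $B(a)=\{v\,:\,\inf_{w\in B}d_{x}(v,w)<e^{-a}\}$, so $B(a)$ \emph{shrinks} as $a$ grows; taking $a=b/2<b$ gives $B(a)\supset B(b)$, and the hypothesis $U\cap B(b)=\varnothing$ does not prevent $U$ from meeting $B(b/2)$. In that case $C_{R}(x,U)\cap C_{R}(x,B(b/2))$ can contain orbit points arbitrarily far from $x$ (take any $v\in U\cap B(b/2)$ and points along the ray $[xv)$), so your ``eventually empty'' claim fails as stated. The fix is to take $a>b$, say $a=b+1$: if $\gamma x$ lies in both cones, there are $v\in U$ and $w\in B(a)$ in the same shadow, whence $\inf_{u\in B}d_{x}(v,u)\leq e^{2R+\delta}e^{-d(x,\gamma x)}+e^{-a}<e^{-b}$ once $d(x,\gamma x)$ is large enough, contradicting $U\cap B(b)=\varnothing$. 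With that one-line correction your argument goes through and the remaining details (finitely many short orbit points leaving every annulus) are as you describe.
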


\begin{proof}
For all $n\geq N_{x,\rho}$ we have:
\begin{eqnarray*}
 \langle \mathcal{M} ^{n}_{x,\rho} (\chi_{\widehat{U}} ) \textbf{1}_{\partial X} , \chi_{B} \rangle &=&\frac{1}{ | C_{n}(x,\rho) |} \sum_{\gamma \in C_{n}(x,\rho)} D_{\gamma  x} (\chi_{\widehat{U}}) \frac{\langle \pi_{x}(\gamma)\textbf{1}_{\partial X},\chi_{B} \rangle}{\phi_{x}(\gamma)} \\
 & = &\sum_{\gamma \in \Gamma} \psi_{n}(\gamma)\frac{ \langle \pi_{x}(\gamma)\textbf{1}_{\partial X},\chi_{B} \rangle}{\phi_{x}(\gamma)}
\end{eqnarray*}

where the inequality follows from the fact that $\pi_{x}$ is positive, and where $$\psi_{n}(\gamma):= \frac{1}{ |C_{n}(x,\rho)| } \chi_{C_{n}(x,\rho)}(\gamma) D_{\gamma  x} (\chi_{\widehat{U}}).$$

Proposition \ref{deform} implies that 
\begin{eqnarray*}
\limsup_{n \rightarrow + \infty} \langle \mathcal{M} ^{n}_{x,\rho} (\chi_{\widehat{U}} ) \textbf{1}_{\partial X} , \chi_{B} \rangle &\leq& \limsup_{n \rightarrow +\infty} \sum_{\gamma \in \Gamma} \psi_{n}(\gamma)D_{\gamma  x} (\chi_{(\widehat{U})})D_{\gamma  x}(\chi_{C_{R}(x,B(b))}) \\
&=&\limsup_{n \rightarrow +\infty}  \frac{1}{ | C_{n}(x,\rho) |} \sum_{\gamma \in C_{n}(x,\rho)} D_{\gamma  x} (\chi_{\widehat{U}\cap C_{R}(x,B(b))})\\
&\leq &\limsup_{n \rightarrow +\infty}  \frac{1}{ | C_{n}(x,\rho) |} \sum_{\gamma \in C_{n}(x,\rho)} D_{\gamma  x} (\chi_{ C_{R}(x,U\cap B(b))})
 \end{eqnarray*}
 Note the general fact $\partial (A\cap B)\subset \partial A \cup \partial B$.
 Corollary \ref{cororoblin} implies that
 $$\limsup_{n \rightarrow +\infty} \langle \mathcal{M} ^{n}_{x,\rho} (\chi_{\widehat{U}} ) \textbf{1}_{\partial X} , \chi_{B} \rangle \leq \frac{\mu_{x}\big(U\cap B(b)\big)}{\|\mu_{x}\|} \cdot$$
  By hypothesis  $U\cap B(b)=\varnothing$ thus we have $$ \limsup_{n \rightarrow +\infty} \langle \mathcal{M} ^{n}_{x,\rho} (\chi_{\widehat{U}} ) \textbf{1}_{\partial X} , \chi_{B} \rangle =0.$$
 \end{proof}

\begin{lemma}\label{lemprepar2}
Let $\widehat{U}$ be a Borel subset of $\overline{X}$ and let $A$ be a Borel subset of $\partial X$. We have $$\limsup_{n\rightarrow +\infty} \langle \mathcal{M} ^{n}_{x,\rho} (\chi_{\widehat{U} } )  \chi_{A} ,\textbf{1}_{\partial X} \rangle \leq \limsup_{n \rightarrow +\infty} \frac{1}{ | C_{n}(x,\rho) |} \sum_{\gamma \in C_{n}(x,\rho)} D_{\gamma^{-1} x } (\chi_{\widehat{U}})  D_{\gamma  x} (\chi_{C_{R}(x,A(a))}).$$ 
 \end{lemma}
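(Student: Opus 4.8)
The plan is to unfold the matrix coefficient defining $\mathcal{M}^{n}_{x,\rho}$, use the unitarity (and positivity) of $\pi_{x}$ to replace $\gamma$ by $\gamma^{-1}$, and then feed the resulting average into Proposition \ref{deform}.

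First I would write, for $n\geq N_{x,\rho}$,
$$\langle \mathcal{M}^{n}_{x,\rho}(\chi_{\widehat U})\chi_{A},\textbf{1}_{\partial X}\rangle=\frac{1}{|C_{n}(x,\rho)|}\sum_{\gamma\in C_{n}(x,\rho)}D_{\gamma x}(\chi_{\widehat U})\,\frac{\langle\pi_{x}(\gamma)\chi_{A},\textbf{1}_{\partial X}\rangle}{\phi_{x}(\gamma)}.$$
Since $\pi_{x}$ is unitary we have $\pi_{x}(\gamma)^{*}=\pi_{x}(\gamma^{-1})$, and since $\chi_{A}$, $\textbf{1}_{\partial X}$ and $\phi_{x}$ are real and nonnegative (recall $\pi_{x}$ is a positive representation), this gives $\langle\pi_{x}(\gamma)\chi_{A},\textbf{1}_{\partial X}\rangle=\langle\pi_{x}(\gamma^{-1})\textbf{1}_{\partial X},\chi_{A}\rangle$ and $\phi_{x}(\gamma)=\phi_{x}(\gamma^{-1})$. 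Moreover $d(\gamma^{-1}x,x)=d(x,\gamma x)$, so $\gamma\mapsto\gamma^{-1}$ is a bijection of $C_{n}(x,\rho)$ onto itself. Reindexing the sum accordingly yields
$$\langle \mathcal{M}^{n}_{x,\rho}(\chi_{\widehat U})\chi_{A},\textbf{1}_{\partial X}\rangle=\sum_{\gamma\in\Gamma}\psi_{n}(\gamma)\,\frac{\langle\pi_{x}(\gamma)\textbf{1}_{\partial X},\chi_{A}\rangle}{\phi_{x}(\gamma)},\qquad \psi_{n}(\gamma):=\frac{\chi_{C_{n}(x,\rho)}(\gamma)}{|C_{n}(x,\rho)|}\,D_{\gamma^{-1}x}(\chi_{\widehat U}).$$

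Next I would check that $(\psi_{n})_{n\geq N_{x,\rho}}$ satisfies the hypotheses of Proposition \ref{deform}: each $\psi_{n}$ lies in $l^{1}(\Gamma)$ with $\|\psi_{n}\|_{1}\leq 1$, because $0\leq D_{\gamma^{-1}x}(\chi_{\widehat U})\leq 1$; and for each fixed $\gamma\in\Gamma$ the condition $\gamma\in C_{n}(x,\rho)$ forces $n\in\big(d(x,\gamma x)-\rho,\ d(x,\gamma x)+\rho\big]$, hence $\psi_{n}(\gamma)=0$ for all large $n$ and in particular $\psi_{n}(\gamma)\to 0$. Proposition \ref{deform} then gives, for every $a>0$,
$$\limsup_{n\to+\infty}\langle \mathcal{M}^{n}_{x,\rho}(\chi_{\widehat U})\chi_{A},\textbf{1}_{\partial X}\rangle\leq \limsup_{n\to+\infty}\sum_{\gamma\in\Gamma}\psi_{n}(\gamma)\,D_{\gamma x}(\chi_{C_{R}(x,A(a))}).$$
Unwinding the definition of $\psi_{n}$ on the right-hand side produces exactly $\limsup_{n\to+\infty}\frac{1}{|C_{n}(x,\rho)|}\sum_{\gamma\in C_{n}(x,\rho)}D_{\gamma^{-1}x}(\chi_{\widehat U})\,D_{\gamma x}(\chi_{C_{R}(x,A(a))})$, which is the asserted inequality.

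All steps are essentially bookkeeping once the pieces are in place; the only point that needs a little care is the identity $\langle\pi_{x}(\gamma)\chi_{A},\textbf{1}_{\partial X}\rangle=\langle\pi_{x}(\gamma^{-1})\textbf{1}_{\partial X},\chi_{A}\rangle$ together with the invariance of $C_{n}(x,\rho)$ under inversion, since it is precisely this symmetrization that puts the average into the exact form $\sum_{\gamma}\psi_{n}(\gamma)\langle\pi_{x}(\gamma)\textbf{1}_{\partial X},\chi_{A}\rangle/\phi_{x}(\gamma)$ required to invoke Proposition \ref{deform}. Beyond that I foresee no real obstacle.
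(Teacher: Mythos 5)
Your argument is correct and is essentially the paper's own proof: the paper likewise rewrites the matrix coefficient as $\sum_{\gamma}\psi_{n}(\gamma)\langle\pi_{x}(\gamma)\textbf{1}_{\partial X},\chi_{A}\rangle/\phi_{x}(\gamma)$ with $\psi_{n}(\gamma)=\frac{1}{|C_{n}(x,\rho)|}\chi_{C_{n}(x,\rho)}(\gamma)D_{\gamma^{-1}x}(\chi_{\widehat{U}})$ and then applies Proposition \ref{deform}. You merely make explicit the reindexing $\gamma\mapsto\gamma^{-1}$ (using $\phi_{x}(\gamma)=\phi_{x}(\gamma^{-1})$ and the inversion-invariance of $C_{n}(x,\rho)$) and the verification of the hypotheses on $\psi_{n}$, which the paper leaves implicit.
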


\begin{proof}
We have for all $n\geq N_{x,\rho}$:
\begin{eqnarray*}
\langle \mathcal{M} ^{n}_{x,\rho} (\chi_{\widehat{U}} )  \chi_{A} ,\textbf{1}_{\partial X} \rangle 
&=& \sum_{\gamma \in \Gamma} \psi_{n}(\gamma)\frac{\langle \pi_{x}(\gamma) \textbf{1}_{\partial X} ,\chi_{A} \rangle}{\phi_{x}(\gamma)},
\end{eqnarray*}
with $$\psi_{n}(\gamma)=\frac{1}{ | C_{n}(x,\rho) |}\chi_{C_{n}(x,\rho)}(\gamma) D_{\gamma ^{-1} x}(\chi_{\widehat{U}}).$$

Applying Proposition \ref{deform} to $\psi_{n}$ defined above we obtain that: 
$$\limsup_{n\rightarrow +\infty} \langle \mathcal{M} ^{n}_{x,\rho} (\chi_{\widehat{U} } )  \chi_{A} ,\textbf{1}_{\partial X} \rangle \leq \limsup_{n \rightarrow +\infty} \frac{1}{ | C_{n}(x,\rho) |} \sum_{\gamma \in C_{n}(x,\rho)} D_{\gamma^{-1} x } (\chi_{\widehat{U}})  D_{\gamma  x} (\chi_{C_{R}(x,A(a))}).$$
\end{proof}

\begin{lemma}\label{lemineg}
Let  $A,B,U \subset \partial {X}$ be Borel subsets such that $\mu_{x}(\partial A)=\mu_{x}(\partial B)=\mu_{x}(\partial U)=0$ and let $\widehat{U}=C_{R}(x,U)\cup U$ be a Borel subset of $\overline{X}$ with $\mu_{x}(\partial U)=0$. 

$$\limsup_{n\rightarrow +\infty} \langle \mathcal{M}^{n}_{x,\rho}(\chi_{\widehat{U}})\chi_{A},\chi_{B} \rangle\leq \frac{\mu_{x}(U \cap B)\mu_{x}(A)}{\| \mu_{x} \|^{2}}\cdot $$
\end{lemma}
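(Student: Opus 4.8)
The plan is to split the sum that defines $\langle \mathcal{M}^{n}_{x,\rho}(\chi_{\widehat{U}})\chi_{A},\chi_{B}\rangle$ according to whether the shadow $\mathcal{O}_{R}(x,\gamma x)$ meets a fixed thickening $B(b)$ of $B$: the indices $\gamma$ whose shadow misses $B(b)$ will contribute a negligible amount by the pointwise estimate of Lemma~\ref{majorationendehors}, whereas the remaining indices will be handled by recognising their contribution as a quantity dominated by $\langle \mathcal{M}^{n}_{x,\rho}(\chi_{C_{R}(x,U\cap B(b'))})\chi_{A},\textbf{1}_{\partial X}\rangle$ for a parameter $b'$ comparable to $b$, which is then controlled by Lemma~\ref{lemprepar2} followed by Corollary~\ref{cororoblin}.

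First I would fix $b>0$ and write $\Gamma'=\{\gamma\in C_{n}(x,\rho):\mathcal{O}_{R}(x,\gamma x)\cap B(b)\neq\varnothing\}$ and $\Gamma''=C_{n}(x,\rho)\setminus\Gamma'$; since $\pi_{x}$ is positive, every term of the defining sum is non-negative, so it suffices to bound the $\Gamma'$- and $\Gamma''$-contributions separately. For $\Gamma''$, using $\pi_{x}(\gamma)\chi_{A}=\chi_{\gamma A}\,b_{x}(\gamma x)\leq b_{x}(\gamma x)$ pointwise and $D_{\gamma x}(\chi_{\widehat{U}})\leq 1$, that contribution is at most $\tfrac{1}{|C_{n}(x,\rho)|}\sum_{\gamma\in\Gamma''}\langle b_{x}(\gamma x),\chi_{B}\rangle/\phi_{x}(\gamma)$; Lemma~\ref{majorationendehors}, applied on $\mathcal{Y}=\Gamma x$ with $B$ in the role of $A$ and $b$ in the role of $a$, bounds each summand by $C_{0}e^{b}/d(x,\gamma x)\leq C_{0}e^{b}/(n-\rho)$ for $n$ large, so this contribution is $O(1/n)$ and vanishes in the $\limsup$.

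For $\Gamma'$ I would use the elementary shadow-diameter estimate $\mathrm{diam}_{d_{x}}\mathcal{O}_{R}(x,\gamma x)\leq e^{-d(x,\gamma x)+2R+\delta}$, which is immediate from the right-hand inequality of Lemma~\ref{encadrement buseman dur} together with $d(x,y)-R\leq (v,y)_{x}$ for $v\in\mathcal{O}_{R}(x,y)$. Hence, for $n$ large (depending on $b$), every shadow $\mathcal{O}_{R}(x,\gamma x)$ with $\gamma\in C_{n}(x,\rho)$ has $d_{x}$-diameter $<e^{-b}$, so that if $\gamma\in\Gamma'$ and $D_{\gamma x}(\chi_{\widehat{U}})=1$ --- i.e.\ $\mathcal{O}_{R}(x,\gamma x)$ meets both $B(b)$ and $U$ --- then it meets $U\cap B(b')$, where $b':=b-\ln 2$, and therefore $D_{\gamma x}(\chi_{C_{R}(x,U\cap B(b'))})=1$. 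Combining $D_{\gamma x}(\chi_{\widehat{U}})\leq D_{\gamma x}(\chi_{C_{R}(x,U\cap B(b'))})$ on $\Gamma'$ with $\langle \pi_{x}(\gamma)\chi_{A},\chi_{B}\rangle\leq \langle \pi_{x}(\gamma)\chi_{A},\textbf{1}_{\partial X}\rangle$ and enlarging the index set back to all of $C_{n}(x,\rho)$ (all terms being non-negative), the $\Gamma'$-contribution is at most $\langle \mathcal{M}^{n}_{x,\rho}(\chi_{C_{R}(x,U\cap B(b'))})\chi_{A},\textbf{1}_{\partial X}\rangle$.

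To conclude I would apply Lemma~\ref{lemprepar2} --- which holds with any Borel subset of $\overline{X}$ in the role of $\widehat{U}$ --- to $C_{R}(x,U\cap B(b'))$, and then Corollary~\ref{cororoblin} with $U\cap B(b')$ and $A(a)$ as the two Borel sets, choosing $a$ and $b$ in the co-countable set of values for which $\mu_{x}(\partial A(a))=0$ and $\mu_{x}(\partial B(b'))=0$; note $\partial(U\cap B(b'))\subseteq \partial U\cup\partial B(b')$ and $\mu_{x}(\partial U)=0$ by hypothesis. This gives $\limsup_{n}\langle \mathcal{M}^{n}_{x,\rho}(\chi_{\widehat{U}})\chi_{A},\chi_{B}\rangle\leq \mu_{x}(U\cap B(b'))\,\mu_{x}(A(a))/\|\mu_{x}\|^{2}$ for all such $a$ and $b$; letting $a\to+\infty$ and $b\to+\infty$ along these cofinal sets, and using $\mu_{x}(\partial A)=\mu_{x}(\partial B)=0$ so that $A(a)\downarrow\overline{A}$ with $\mu_{x}(\overline{A})=\mu_{x}(A)$ and $B(b')\downarrow\overline{B}$ with $\mu_{x}(U\cap\overline{B})=\mu_{x}(U\cap B)$, yields the asserted inequality. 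The main obstacle is the $\Gamma'$ step: merging the two a priori independent localisations --- that of $\gamma x$ encoded by $\chi_{\widehat{U}}$ and that of the range of $\pi_{x}(\gamma)$ encoded by $\chi_{B}$ --- into the single cone $C_{R}(x,U\cap B(b'))$, which is only possible because the shadows collapse to points, together with the bookkeeping needed to keep every thickened set $\mu_{x}$-boundary-null so that Corollary~\ref{cororoblin} applies.
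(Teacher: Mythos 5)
Your proof is correct, and it reaches the asserted bound by the same engine as the paper --- thicken $B$ to $B(b)$, discard the elements whose shadow misses $B(b)$ via Lemma \ref{majorationendehors}, feed what remains into Corollary \ref{cororoblin}, and shrink the thickenings along the co-countable set of radii with $\mu_{x}$-null frontier --- but the decomposition is organized differently. The paper splits the \emph{set} $\widehat{U}$ into $\widehat{U}_{1}=\widehat{U}\cap\overline{C_{R}(x,B(b))}$ and its complement and then invokes the pre-packaged Lemmas \ref{lemprepar1} and \ref{lemprepar2}, whereas you partition the \emph{index set} $C_{n}(x,\rho)$ according to whether $\mathcal{O}_{R}(x,\gamma x)$ meets $B(b)$, redo the $\Gamma''$ estimate by hand (your $O(1/n)$ bound is exactly the $\Gamma_{3}$ step inside Proposition \ref{deform}), and handle $\Gamma'$ with the explicit shadow-diameter estimate $\mathrm{diam}_{d_{x}}\mathcal{O}_{R}(x,\gamma x)\leq e^{-d(x,\gamma x)+2R+\delta}$, correctly derived from Lemma \ref{encadrement buseman dur}, in order to merge the two localizations into the single cone $C_{R}(x,U\cap B(b'))$ with $b'=b-\ln 2$. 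That diameter step is the genuine added value: the paper's route passes through the inclusion $\widehat{U}\cap C_{R}(x,B(b))\subseteq C_{R}(x,U\cap B(b))$ (inside Lemma \ref{lemprepar1}) and through applying Corollary \ref{cororoblin} to $\widehat{U}_{1}$, which is not literally a cone over $U\cap\overline{B(b)}$; both steps implicitly rest on exactly the ``shadows collapse to points'' observation you make explicit, at the harmless cost of replacing $b$ by $b-\ln 2$. The only bookkeeping to watch is that your threshold $n_{0}(b)$ depends on $b$, which is fine since $b$ is fixed before the $\limsup$ in $n$ is taken, and that $b'$ must also avoid the countably many radii where $\mu_{x}(\partial B(b'))>0$, which you do note.
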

\begin{proof}
Let $a>0$ and $b>0$, and consider $A(a)$ and $B(b)$ such that $\mu_{x}(\partial B(b))=0=\mu_{x}(\partial A(a))$. Let $B(b)^{c}=\partial X \backslash B(b)$. Set $\widehat{U_{1}}=\widehat{U} \cap \overline{C_{R}(x,B(b))}$ and  $\widehat{U_{2}}=\widehat{U} \cap \overline{X} / \overline{C_{R}(x,B(b))}$. Let  $U_{1}=\widehat{U_{1}}\cap \partial X$ and  $U_{2}=\widehat{U_{2}}\cap \partial X$ and notice that $U_{1}=U\cap \overline{B(b)}$ and $U_{2}=U\cap\partial X / \overline{B(b)}$. Observe that $U_{2}\cap B(b)=\varnothing$. Since $\widehat{U}=\widehat{U_{1}}\sqcup \widehat{U_{2}}$ we have:
\begin{eqnarray*}
\langle \mathcal{M}_{x,\rho}^{n}(\chi_{\widehat{U} })\chi_{A},\chi_{B}\rangle&=&
 \langle \mathcal{M}_{x,\rho}^{n}(\chi_{\widehat{U_{1}}})\chi_{A},\chi_{B} \rangle+\langle \mathcal{M}_{x,\rho}^{n}(\chi_{\widehat{U_{2}}})\chi_{A},\chi_{B} \rangle\\
 &\leq & \langle \mathcal{M}_{x,\rho}^{n}(\chi_{\widehat{U_{1}}})\chi_{A},\textbf{1}_{\partial X} \rangle+\langle \mathcal{M}_{x,\rho}^{n}(\chi_{\widehat{U_{2}}})\textbf{1}_{\partial X},\chi_{B} \rangle.
\end{eqnarray*}
 Applying Lemma \ref{lemprepar1} to the second term and Lemma \ref{lemprepar2} to the first term of the right hand side above inequality, we obtain: $$\limsup_{n \rightarrow +\infty} \langle \mathcal{M}_{x,\rho}^{n}(\chi_{\widehat{U} })\chi_{A},\chi_{B}\rangle \leq \limsup_{n \rightarrow +\infty} \frac{1}{ | C_{n}(x,\rho) |} \sum_{\gamma \in C_{n}(x,\rho)}D_{\gamma ^{-1} x} (\chi_{\widehat{U_{1}}})  D_{\gamma  x} (\chi_{C_{R}(x,A(a))} ).$$ 
 Since $\mu_{x}(\partial U_{1})=0=\mu_{x}(\partial A(a))$, Roblin's corollary \ref{cororoblin} leads to 
  $$\limsup_{n \rightarrow +\infty} \langle \mathcal{M}_{x,\rho}^{n}(\chi_{\widehat{U} })\chi_{A},\chi_{B}\rangle \leq \frac{\mu_{x}(U\cap B(b))\mu_{x}(A(a))}{\|\mu_{x}\|^{2}}.$$
 Because the above inequality holds for all $a,b>0$ but at most countably many values of $a$ and $b$, we obtain the required inequality.
 
\end{proof}

\begin{proof}[Proof of Proposition \ref{applicationroblin}]
By Lemma \ref{lemineg} it is sufficient to prove that $$\liminf_{n\rightarrow +\infty} \langle \mathcal{M}^{n}_{x,\rho}(\chi_{\widehat{U}})\chi_{A},\chi_{B} \rangle=\frac{\mu_{x}(U \cap B)\mu_{x}(A)}{\| \mu_{x}\|^{2}}\cdot $$
If $W$ is a Borel subset of $\partial X$ (or $\overline{X}$), we set $W^{0}=W$ and $W^{1}=\partial X \backslash W$ (or $W^{1}=\overline{X} \backslash W$). 
We have 
\begin{eqnarray*}
1&=&\langle \mathcal{M}_{x,\rho}^{n}(\textbf{1}_{\overline{ X}})\textbf{1}_{\partial X},\textbf{1}_{\partial X}\rangle \\
&=&\langle \mathcal{M}_{x,\rho}^{n}(\chi_{\widehat{U}^0}+\chi_{\widehat{U}^1})\chi_{A^0}+\chi_{A^1},\chi_{B^0}+\chi_{B^1} \rangle \\
 &=& \sum_{i,j,k} \langle\mathcal{M}_{x,\rho}^{n}(\chi_{\widehat{U}^i})\chi_{A^j},\chi_{B^k} \rangle \\
 &=&  \langle\mathcal{M}_{x,\rho}^{n}(\chi_{\widehat{U}})\chi_{A},\chi_{B} \rangle+\sum_{i,j,k \neq (0,0,0)} \langle\mathcal{M}_{x,\rho}^{n}(\chi_{\widehat{U}^i})\chi_{A^j},\chi_{B^k} \rangle. 
\end{eqnarray*}
Then 
\begin{eqnarray*}
1 &\leq& \liminf_{n \rightarrow +\infty}\langle \mathcal{M}_{x,\rho}^{n}(\chi_{\widehat{U}})\chi_{A},\chi_{B} \rangle+\sum_{i,j,k \neq (0,0,0)}  \limsup_{n \rightarrow + \infty} \langle \mathcal{M}_{x,\rho}^{n}(\chi_{\widehat{U}^i})\chi_{A^j},\chi_{B^k} \rangle \\
&\leq &  \limsup_{n \rightarrow +\infty}\langle \mathcal{M}_{x,\rho}^{n}(\chi_{\widehat{U}})\chi_{A},\chi_{B} \rangle+\sum_{i,j,k \neq (0,0,0)}  \limsup_{n \rightarrow + \infty} \langle \mathcal{M}_{x,\rho}^{n}(\chi_{\widehat{U}^i})\chi_{A^j},\chi_{B^k} \rangle \\
& \leq & \frac{1}{\| \mu_{x} \|^{2}}\sum_{i,j,k } \mu_{x}(U^{i} \cap B^k) \mu_{x}(A^j) \\
&=& 1,
\end{eqnarray*}
where the last inequality comes from Lemma \ref{lemineg}. Hence the inequalities of the above computation are equalities, so $$\liminf_{n\rightarrow +\infty} \langle \mathcal{M}^{n}_{x,\rho}(\chi_{\widehat{U}})\chi_{A},\chi_{B} \rangle =\frac{\mu_{x}(U \cap B)\mu_{x}(A)}{\| \mu_{x}\|^{2}}=\limsup_{n\rightarrow +\infty} \langle \mathcal{M}^{n}_{x,\rho}(\chi_{\widehat{U}})\chi_{A},\chi_{B} \rangle $$
and the proof is done.

\end{proof}
\section{Conclusion}\label{conclusion}

\subsection{Standard facts about Borel subsets of measure zero frontier}
Let us recall two standard facts of measure theory that we state as lemmas:
\begin{lemma}\label{mesurenullebord}
Assume that $(Z,d,\mu)$ is a metric measure space. Then the $\sigma$-algebra generated by Borel subset with measure zero frontier generates the Borel $\sigma$-algebra.
\end{lemma}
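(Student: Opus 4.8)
The plan is to reduce everything to showing that every open ball lies in $\mathcal{A}$, the $\sigma$-algebra generated by the Borel subsets of $Z$ with $\mu$-null frontier. Since $Z$ is a metric space which in the situations of interest is compact, hence separable, the Borel $\sigma$-algebra is generated by the collection of open balls $B(z,r)$, $z\in Z$, $r>0$ (every open set is a countable union of such balls). Thus it will suffice to check that $B(z,r)\in\mathcal{A}$ for all $z$ and $r$.

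Next I would fix $z\in Z$ and exploit that the spheres $S(z,r):=\{w\in Z : d(z,w)=r\}$, $r>0$, form a pairwise disjoint family of Borel subsets of $Z$. Since $\mu$ is finite on bounded sets, for each $R>0$ only countably many $r\in(0,R]$ can satisfy $\mu(S(z,r))>0$; letting $R\to\infty$, the set $\mathcal{G}_z:=\{r>0 : \mu(S(z,r))=0\}$ is cocountable in $(0,\infty)$, hence dense. Now $\partial B(z,r)\subseteq \overline{B}(z,r)\setminus B(z,r)\subseteq S(z,r)$, so for every $r\in\mathcal{G}_z$ the ball $B(z,r)$ has $\mu$-null frontier and therefore belongs to $\mathcal{A}$.

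To conclude, for an arbitrary radius $R>0$ I would choose $r_n\in\mathcal{G}_z$ with $r_n\uparrow R$ and write $B(z,R)=\bigcup_{n\ge1}B(z,r_n)\in\mathcal{A}$; separability then upgrades this to the statement that every open set lies in $\mathcal{A}$, so $\mathcal{A}$ contains the Borel $\sigma$-algebra, and since the reverse inclusion is trivial the two $\sigma$-algebras coincide. The only point requiring a little care is the counting step — that at most countably many concentric spheres about a fixed point carry positive mass — which rests on $\mu$ being finite on balls; in the present setting $\mu$ is a finite measure on the compact space $Z$, so this is immediate, and the remainder is routine bookkeeping.
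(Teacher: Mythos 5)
Your argument is correct and is the standard proof of this fact; the paper itself states the lemma without proof (it is presented as one of two ``standard facts of measure theory''), so there is no competing argument in the text to compare against. The two points you flag are exactly the ones that matter: (i) the counting step, that for fixed $z$ the disjoint spheres $S(z,r)$ can carry positive mass for at most countably many $r$, which needs $\mu$ finite on balls, and (ii) the reduction of general open sets to countable unions of open balls, which needs separability. Both hypotheses hold in every application the paper makes of the lemma ($Z=\partial X$ or $\overline{X}$ is compact metric and $\mu_x$ is a finite Radon measure), and the inclusion $\partial B(z,r)\subseteq S(z,r)$ together with the exhaustion $B(z,R)=\bigcup_n B(z,r_n)$ with $r_n\uparrow R$ chosen from the cocountable set of ``good'' radii is exactly the right bookkeeping. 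So the proposal is a complete and correct proof of the lemma as it is used in the paper.
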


Let $\chi_{A}$ be the characteristic function of a Borel subset $A$ of $\partial X$.
We state another useful lemma (see \cite[Appendix B, Lemma B.2 (1)]{BM} for a proof):

\begin{lemma}\label{denseL2}
Assume that $(Z,d,\mu)$ is a metric measure space where $\mu$ is Radon measure. Then the closure of the subspace spanned by the characteristic functions of Borel subset having zero measure frontier is $$\overline{Span\lbrace \chi_{A} \mbox{ such that } \mu(\partial A)=0 \rbrace}^{L^2}=L^{2}(Z,\mu).$$
\end{lemma}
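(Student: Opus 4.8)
The plan is to build on the fact that the class $\mathcal A:=\{A\subseteq Z\ \text{Borel}:\mu(\partial A)=0\}$ is an \emph{algebra} of sets: since $\partial(A^{c})=\partial A$ and $\partial(A\cup B)\cup\partial(A\cap B)\subseteq\partial A\cup\partial B$, while $\partial Z=\partial\varnothing=\varnothing$, the class $\mathcal A$ is stable under complements and finite unions and intersections, so $\mathrm{Span}\{\chi_{A}:A\in\mathcal A\}$ is an algebra of bounded functions. By Lemma~\ref{mesurenullebord}, $\sigma(\mathcal A)=\mathcal B(Z)$. Because finite‑measure simple functions are dense in $L^{2}(Z,\mu)$, it is enough to show that $\chi_{E}$ lies in $V:=\overline{\mathrm{Span}\{\chi_{A}:A\in\mathcal A\}}^{\,L^{2}}$ for every Borel set $E$ with $\mu(E)<\infty$.

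Fix such an $E$ and $\varepsilon>0$. By outer and inner regularity of the Radon measure $\mu$, I would pick an open set $U\supseteq E$ and a compact set $K\subseteq E$ with $\mu(U\setminus K)<\varepsilon$ (in particular $\mu(U)<\infty$). The \textbf{key step} is to observe that $Z$ carries a wealth of balls with $\mu$‑null boundary: for a fixed centre $z$ the spheres $S(z,r):=\{w:d(z,w)=r\}$, $r>0$, are pairwise disjoint, and since $\mu$ is locally finite, only countably many of those contained in a fixed finite‑measure neighbourhood of $z$ can have positive mass (this is cleanest when $\mu$ is finite, as in the applications where $Z$ is compact); as $\partial B(z,r)\subseteq S(z,r)$, around every $z\in K$ there exist balls $B(z,r_{z})$ of arbitrarily small radius with $B(z,r_{z})\subseteq U$ and $B(z,r_{z})\in\mathcal A$. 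By compactness of $K$ finitely many such balls cover $K$; call $V'$ their union. Then $V'\in\mathcal A$ and $K\subseteq V'\subseteq U$, hence $E\triangle V'\subseteq U\setminus K$, so $\|\chi_{E}-\chi_{V'}\|_{L^{2}}^{2}=\mu(E\triangle V')\le\mu(U\setminus K)<\varepsilon$. Since $\chi_{V'}\in\mathrm{Span}\{\chi_{A}:A\in\mathcal A\}$ and $\varepsilon>0$ was arbitrary, $\chi_{E}\in V$; therefore $V=L^{2}(Z,\mu)$.

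The only genuine obstacle is the middle step — manufacturing enough balls with $\mu$‑null boundary inside a prescribed open set — which is exactly the geometric input that also makes Lemma~\ref{mesurenullebord} true; the remaining ingredients (that $\mathcal A$ is an algebra, regularity of $\mu$, density of finite‑measure simple functions) are routine. One may alternatively finish once $\sigma(\mathcal A)=\mathcal B(Z)$ is available: if $f\in L^{2}$ is orthogonal to all $\chi_{A}$ with $A\in\mathcal A$, then on finite‑measure Borel sets the set function $E\mapsto\int_{E}f\,d\mu$ is a complex measure vanishing on the $\pi$‑system $\mathcal A$, while $\{E:\int_{E}f\,d\mu=0\}$ is a $\lambda$‑system, so Dynkin's lemma forces it to vanish on $\mathcal B(Z)$, whence $f=0$ and $V^{\perp}=\{0\}$.
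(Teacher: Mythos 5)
Your proof is correct. Note that the paper does not prove this lemma itself; it only cites \cite[Appendix B, Lemma B.2 (1)]{BM}, and your argument is essentially the standard one behind that reference: the Borel sets with $\mu$-null boundary form an algebra; around each point there are arbitrarily small balls with null boundary, because the spheres $S(z,r)$ about a fixed centre are pairwise disjoint so a locally finite measure can charge only countably many of them; and inner/outer regularity plus a compactness covering argument then approximates any finite-measure Borel set in symmetric difference, hence its indicator in $L^{2}$. Two minor points. First, the sphere step does not really need $\mu$ finite: for a Radon (hence locally finite) measure you may choose, for each $z\in K$, a radius bound $R_{z}$ with $B(z,R_{z})\subseteq U$ and $\mu(B(z,R_{z}))<\infty$, and then all but countably many $r\in(0,R_{z})$ give $\mu(S(z,r))=0$; so your parenthetical caveat can be removed. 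Second, your alternative Dynkin ending requires $E\mapsto\int_{E}f\,d\mu$ to be a genuine complex measure on all of $\mathcal{B}(Z)$ with $\int_{Z}f\,d\mu=0$, i.e. it implicitly uses $f\in L^{1}$; this is automatic when $\mu(Z)<\infty$ --- as in the paper's application, where $Z=\partial X$ is compact and $\mu_{x}$ is finite --- but should be flagged if you want the statement at the generality in which it is phrased. With those remarks, the main approximation argument is complete and self-contained.
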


\subsection{Proofs}

\begin{proof}[Proof of Theorem A] 
Let $\mu$ be a $\Gamma$-invariant conformal density of dimension $\alpha(\Gamma)$, where $\Gamma$ in $\mathcal{C}$. Since for all $x\in X$, the metric measure space $(\Lambda_{\Gamma},d_{x},\mu_{x})$ is Ahlfors $\alpha$-regular Proposition \ref{H-CHestimates} ensures that the Harish-Chandra estimates hold on $\Gamma x$. Hence Proposition  \ref{inegsample} and \ref{applicationroblin} are available. The sequence $\mathcal{M}_{x,\rho}^{n}$ is defined for $n\geq N_{x,\rho}$ for some integer $N_{x,\rho}$.
 There are two steps. \\

\textbf{Step 1: $(\mathcal{M}_{x,\rho}^{n})_{n\geq N_{x,\rho}}$ is uniformly bounded.} First of all, observe that  $\mathcal{M}_{x,\rho}^{n}(\textbf{1}_{ \overline{X} })$ is self-adjoint (see Proposition \ref{prop123} (1)). Note that $\mathcal{M}_{x,\rho}^{n}(\textbf{1}_{ \overline{X} })$ preserves $L^{\infty}(\partial X,\mu_{x})$, and by duality it preserves also $L^{1}(\partial X,\mu_{x})$.

Combining Proposition \ref{inegsample} with the fact that $\mathcal{M}_{x,\rho}^{n}(\textbf{1}_{ \overline{X} })\textbf{1}_{\partial X}=F_{x,\rho}^{n}$, we have that the sequence $\big(\mathcal{M}_{x,\rho}^{n}(\textbf{1}_{ \overline{X} })\big)_{n\geq N_{x,\rho}}$, with $\mathcal{M}_{x,\rho}^{n}(\textbf{1}_{ \overline{X} })$ viewed as operators from $L^{\infty}(\partial X,\mu_{x})$ to $L^{\infty}(\partial X,\mu)$, is uniformly bounded. Riesz-Thorin interpolation theorem implies the sequence $(\mathcal{M}_{x,\rho}^{n}(\textbf{1}_{ \overline{X} })\big)_{n\geq N_{x,\rho}}$, with $\mathcal{M}_{x,\rho}^{n}(\textbf{1}_{ \overline{X} })$ viewed as operators in $\mathcal{B} \big( L^{2}(\partial X, \mu_{x}) \big)$, is uniformly bounded.
 Then Proposition \ref{prop123} (2) completes \textbf{Step 1}.
\\

\textbf{Step 2: computation of the limit of $(\mathcal{M}_{x,\rho}^{n})_{n \geq N_{x,\rho}}$}. By the Banach-Alaoglu theorem, \textbf{Step 1} implies that  $(\mathcal{M}_{x,\rho}^{n})_{n\geq N_{x,\rho}}$ has accumulation points. Let $\mathcal{M}^{\infty}_{x}$ be an accumulation point of $(\mathcal{M}_{x,\rho}^{n})_{n \geq N_{x,\rho}}$ with respect to  the weak* topology of $\mathcal{L}\big( C(\overline{X}),\mathcal{B}(L^{2}(\partial X,\mu_{x}))\big)$. Let $\widehat{U}=C_{R}(x,U)\cup U$ be Borel subset of $\overline{X}$ with $U$ be a Borel subset of $\partial X$ such that $\mu_{x}(\partial U)=0$. It follows from Proposition \ref{applicationroblin} and from the definition (\ref{mesurelimite}) of $\mathcal{M}_{x}$ that for all Borel subsets $A,B\subset \partial X$ with $\mu_{x}(\partial A)=\mu_{x}(\partial B)=0$ we have that $$\langle \mathcal{M}^{\infty}_{x}(\chi_{\widehat {U}})\chi_{A},\chi_{B}\rangle=\frac{\mu_{x}(U \cap B)\mu_{x}(A)}{\| \mu_{x}\|^{2}}=\langle \mathcal{M}_{x}(\chi_{\widehat {U}})\chi_{A},\chi_{B}\rangle.$$
The above equality holds for all open balls $B_{X}(x,r)$ of $X$, namely  $$\langle \mathcal{M}^{\infty}_{x}(\chi_{B_{X}(x,r)})\chi_{A},\chi_{B}\rangle=\frac{\mu_{x}(B_{X}(x,r) \cap B)\mu_{x}(A)}{\| \mu_{x}\|^{2}}=\langle \mathcal{M}_{x}(\chi_{B_{X}(x,r)})\chi_{A},\chi_{B}\rangle=0.$$
 Since the open balls $\lbrace B_{X}(x,r),x\in X,r>0 \rbrace $ of $X$ together with the subsets $\widehat{U}=C_{R}(x,U)\cup U$, with $U$ Borel subsets of $\partial X$ such that $\mu_{x}(\partial U)=0$ generate the Borel $\sigma$-algebra of $\overline{X}$, Carath\'eodory's extension theorem implies that for all $f\in C(\overline{X})$ and for all Borel subsets $A,B\subset \partial X$ satisfying $\mu_{x}(\partial A)=\mu_{x}(\partial B)=0$ we have $$\langle \mathcal{M}^{\infty}_{x}(f)\chi_{A},\chi_{B}\rangle=\langle \mathcal{M}_{x}(f)\chi_{A},\chi_{B}\rangle.$$
Lemma \ref{denseL2} combined with the above equality imply that the operators $\mathcal{M}^{\infty}_{x}$ and $\mathcal{M}_{x}$ regarded as functionals of $(C(\overline{X}) \widehat{\otimes} L^{2}(\partial X, \mu_{x}) \widehat{\otimes} L^{2}(\partial X, \mu_{x}))^{*}$ (see (\ref{iso3})) are equal on a dense subset of $C(\overline{X}) \widehat{\otimes} L^{2}(\partial X, \mu_{x}) \widehat{\otimes} L^{2}(\partial X, \mu_{x})$. We deduce that $\mathcal{M}_{x}$ is the unique accumulation point of the sequence $(\mathcal{M}_{x,\rho}^{n})_{n \geq N_{x,\rho}}$.
\end{proof}

\begin{proof}[Proof of Corollary B]
Apply the definition of weak$^{*}$ convergence to $\textbf{1}_{\overline{X}}\otimes  \xi \otimes \eta$ for all $\xi,\eta \in L^{2}(\partial X, \mu_{x})$, and observe that $\|\mu\|_{x}^{2}\mathcal{M}_{x}(\textbf{1}_{\overline{X}})$ is the orthogonal projection onto the space of constant functions.
\end{proof}

\begin{proof}[Proof of Corollary C]
Since $(\pi_{x})_{x\in X}$ are unitarily equivalent, it suffices to prove irreducibility for some $\pi_{x}$ with $x$ in $ X$. Theorem A shows that the vector $\textbf{1}_{\partial X}$ is cyclic for the representation $\pi_{x}$.
Moreover, Corollary B shows that the orthogonal projection onto the space of constant functions is in the von Neumann algebra associated with $\pi_{x}$. Then, a standard argument (see for example \cite[Lemma 6.1]{LG}) completes the proof. 

\end{proof}

\begin{remark}
The hypothesis:\emph{ $\Gamma$ is convex cocompact or a lattice in a rank one semisimple Lie group} guarantees the Ahlfors regularity of the limit set, that implies the Harish-Chandra estimates of $\varphi_{x}$ on $CH(\Lambda_{\Gamma})$ and on $\Gamma x$. In other words, the proof of irreducibility of boundary representations for a geometrically finite group with a non-arithmetic spectrum is reduced, by this approach, to the Harish-Chandra estimates of $\varphi_{x}$ for each $x\in  X$ on $CH(\Lambda_{\Gamma}) \backslash B_{X}(x,R_{x})$ and on the orbit $\Gamma x \backslash B_{X}(x,R_{x})$ for some $R_{x}>0$. And this approach should apply to  some geometrically finite groups which are neither convex cocompact and nor lattices.
\end{remark}
\section{Some remarks about equidistribution results}\label{equidistribution}

 \subsection{Dirac-Weierstrass family}\label{DW}
Let $\Gamma$ be a discrete group of isometries of $X$.
Consider $(d_{x})_{x\in X}$ a visual metric on $\partial X$, and let $\mu$ be a $\Gamma$-invariant conformal densitiy of dimension $\alpha$. We fix $x\in X$ and we follow \cite[Chapter 2, \S 2.1, p\ 46]{Jo}, and adapt the definition of a Dirac-Weierstrass family to the density $\mu$:
\begin{defi}
\label{diracweier} A Dirac-Weierstrass family $(K(y,\cdot))_{y\in X}$ with respect to  $\mu_{x}$, is a continuous map
 $K: (y,v) \in X\times\partial X \mapsto K(y,v) \in \mathbb{R}$  satisfying
\begin{enumerate}
	\item $K(y,v)\geq 0$ for all $v \in \partial X$ and $y\in X$,
	\item $\int_{\partial X} K(y,v) d\mu_{x}(v) =1$ for all $y \in X$,
	\item for all $ v_{0} \in \partial X$ and for all $ r_{0} >0$ we have: $$\int_{\partial X \backslash B(v_{0},r_{0})}K(y,v)d\mu_{x}(v) \rightarrow 0 ~~\mbox{as}~~ y \rightarrow v_{0}. $$
\end{enumerate}
\end{defi} 

A Dirac-Weierstrass family yields an integral operator $\mathcal{K}$:
\begin{align*}
 \mathcal{K}:   f \in L^{1}(\partial X,\mu_{x}) & \mapsto \mathcal{K}f \in C(X)
\end{align*}
defined as :
\begin{align*}
\mathcal{K}f :  y \in  X &\mapsto  \int_{\partial X}f(v)K(y,v)d\mu_{x}(v) \in \mathbb{C} .
\end{align*}
\subsection{Continuity}\label{sectioncontinuity}

Let $f$ be a function on $\partial X$. We define the function $\overline{\mathcal{K}}f$ on $\overline{X}$ as the following: 
\begin{equation}\label{extension}
\overline{\mathcal{K} }f: y \in  \overline{X} \mapsto  \overline{\mathcal{K}}f(y)= \left\{
    \begin{array}{ll}
       \mathcal{K}f(y) & \mbox{if } y \in X \\
        f(y) & \mbox{if }  y \in \partial X 
    \end{array}
		\right. 
		\end{equation}

Thus, $\overline{\mathcal{K}}$ is an operator which assigns a function defined on $\overline{X}$ to a function defined on $\partial X$.

\begin{prop}\label{continuity}
If $f$ is a continuous functions on $\partial X$, the function $\overline{\mathcal{K}}  (f)$ is a continuous function on $\overline{X}$.
 
\end{prop}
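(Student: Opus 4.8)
The plan is to show continuity at each point of $\overline{X}$, splitting into the case of an interior point $y_0 \in X$ and the case of a boundary point $v_0 \in \partial X$. At an interior point $y_0 \in X$, continuity of $\overline{\mathcal{K}}f = \mathcal{K}f$ is essentially routine: the kernel $K$ is jointly continuous on $X \times \partial X$, the boundary $\partial X$ is compact, and $f$ is continuous (hence bounded) on $\partial X$; so if $y_n \to y_0$ in $X$ one applies dominated convergence (dominating $|f(v) K(y_n,v)|$ by $\|f\|_\infty \sup_{n} K(y_n,v)$, with the sup over the compact tail being bounded by uniform continuity of $K$ on a compact neighborhood of $(y_0,\partial X)$) to get $\mathcal{K}f(y_n) \to \mathcal{K}f(y_0)$. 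One should also note that a sequence in $\overline{X}$ converging to $y_0 \in X$ is eventually in $X$, so there is nothing further to check there.

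The substantive case is continuity at a boundary point $v_0 \in \partial X$. Here I would take an arbitrary sequence $y_n \to v_0$ in $\overline{X}$ and show $\overline{\mathcal{K}}f(y_n) \to f(v_0)$. If infinitely many $y_n$ lie in $\partial X$, then along that subsequence $\overline{\mathcal{K}}f(y_n) = f(y_n) \to f(v_0)$ by continuity of $f$ on $\partial X$ together with the fact that the subspace topology on $\partial X$ inherited from $\overline{X}$ agrees with the visual-metric topology. So it remains to handle the $y_n \in X$ case. Fix $\varepsilon > 0$. By continuity of $f$ at $v_0$, choose $r_0 > 0$ so that $|f(v) - f(v_0)| < \varepsilon$ for all $v \in B(v_0, r_0)$. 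Then write, using property (2) of the Dirac-Weierstrass family,
\[
\overline{\mathcal{K}}f(y_n) - f(v_0) = \int_{\partial X} \big(f(v) - f(v_0)\big) K(y_n, v)\, d\mu_x(v),
\]
and split the integral over $B(v_0, r_0)$ and its complement. The first piece is bounded in absolute value by $\varepsilon \int_{\partial X} K(y_n,v)\, d\mu_x(v) = \varepsilon$, using properties (1) and (2). The second piece is bounded by $2\|f\|_\infty \int_{\partial X \setminus B(v_0, r_0)} K(y_n, v)\, d\mu_x(v)$, which tends to $0$ as $n \to \infty$ by property (3) of Definition \ref{diracweier}, since $y_n \to v_0$. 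Hence $\limsup_n |\overline{\mathcal{K}}f(y_n) - f(v_0)| \le \varepsilon$, and letting $\varepsilon \to 0$ gives the claim.

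Finally I would remark that a general sequence $y_n \to v_0$ in $\overline{X}$ can be partitioned into its subsequence in $X$ and its subsequence in $\partial X$, and since both subsequences have $\overline{\mathcal{K}}f$-values converging to $f(v_0)$ by the two arguments above, the full sequence does as well; thus $\overline{\mathcal{K}}f$ is continuous at $v_0$. Since $\overline{X}$ is metrizable (its topology being compatible with a metric extending the visual metric on $\partial X$, as recalled in Subsection \ref{BMSroblin}), sequential continuity suffices, and the proof is complete. I expect the only mild subtlety to be the bookkeeping around sequences that mix interior and boundary points, and the verification that convergence $y_n \to v_0$ in $\overline{X}$ really does feed into hypothesis (3) — but hypothesis (3) is stated precisely for convergence "$y \to v_0$" in $X$, so this is immediate once the boundary-valued terms are peeled off.
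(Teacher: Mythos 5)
Your proof is correct and follows essentially the same route as the paper: continuity at interior points from the continuity of $K$, and at a boundary point $v_0$ the split of $\int_{\partial X}(f(v)-f(v_0))K(y,v)\,d\mu_x(v)$ into $B(v_0,r_0)$ (controlled by continuity of $f$ and property (2)) and its complement (controlled by property (3)). The only cosmetic difference is that you argue with sequences and justify this via metrizability of $\overline{X}$, whereas the paper works directly with a neighborhood $V$ of $v_0$ in $\overline{X}$, which sidesteps that point.
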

\begin{proof}
 Observe first that since $K$ is a continuous function on $X$ the function  $\overline{\mathcal{K}}f$ is on $X$. \\
 Let $v_{0}$ be in $\partial X$ and let $\epsilon>0$.  
Since $f$ is continuous, there exists $r>0$ such that $$|f(v_{0})-f(v)|<\frac{\epsilon}{2},$$ whenever $v\in B(v_{0},r)$. Besides, by (3) in Definition \ref{diracweier}, there exists a neighborhood $V$ of $v_{o}$ such that for all $y\in V$ we have: $$\int_{\partial X \backslash B(v_{0},r) } K(y,v) d\mu(v)\leq \frac{\epsilon}{4\|f\|_{\infty}}\cdot$$ We have for all $y\in V$ : 
\begin{align*}
|\overline{\mathcal{K}}f(v_{0})-\overline{\mathcal{K}}f(y)| &\leq \int_{ B(v_{0},r) }|f(v_{0})-f(v)| K(y,v) d\mu_{x}(v)+\int_{\partial X \backslash B(v_{0},r) }|f(v_{0})-f(v)| K(y,v) d\mu_{x}(v) \\ 
&\leq \frac{\epsilon}{2} +2\|f\|_{\infty}\int_{\partial X \backslash B(v_{0},r) } K(y,v) d\mu_{x}(v)\\
&\leq \epsilon.
\end{align*}

Hence, $\overline{\mathcal{K}}f$ is a continuous function on $\overline{X}$. 
\end{proof}

\subsection{Examples of Dirac-Weierstrass family}
Let $R>0$, and consider for each $y \in X$ a point $w_{x}^{y}\in \mathcal{O}_{R}(x,y)$.
We start by a lemma:
\begin{lemma}\label{visual} Let $v_{0}$ be in $\partial X$. Then $d_{x}(v_{0},w_{x}^{y})\rightarrow 0$ as $y\rightarrow v_{0}$.
\end{lemma}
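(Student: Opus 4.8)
The plan is to translate the statement into one about Gromov products. By the defining formula \eqref{distance} we have $d_x(v_0,w_x^y)=\exp\big(-(v_0,w_x^y)_x\big)$, so it suffices to prove that $(v_0,w_x^y)_x\to+\infty$ as $y\to v_0$ in $\overline X$.

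First I would record the elementary consequence of $w_x^y\in\mathcal O_R(x,y)$ that was already used in the proof of Lemma \ref{encadrement buseman dur}: since the ray $[x w_x^y)$ meets $B_X(y,R)$, one has $\beta_{w_x^y}(x,y)\geq d(x,y)-2R$, hence by \eqref{buseman} that $(w_x^y,y)_x\geq d(x,y)-R$. Applying the $\delta$-hyperbolic inequality \eqref{hyperbolic}, extended to $\overline X$, to the three points $v_0,y,w_x^y$ relative to $x$ then gives
$$
(v_0,w_x^y)_x\;\geq\;\min\big\{(v_0,y)_x,\,(y,w_x^y)_x\big\}-\delta\;\geq\;\min\big\{(v_0,y)_x,\,d(x,y)-R\big\}-\delta .
$$
It remains to check that the right-hand side diverges to $+\infty$ as $y\to v_0$. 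On one hand $X$ is proper and $v_0\in\partial X$, so $d(x,y)\to+\infty$. On the other hand, the cone topology on $\overline X$ is compatible with the visual metrics on $\partial X$ (see \cite[\S 1.5]{Bou}), so $y\to v_0$ forces $(v_0,y)_x\to+\infty$: concretely, writing $r$ for a geodesic ray representing $v_0$, the quantity $(v_0,y)_x=\lim_{t}\frac12\big(d(x,r(t))+d(x,y)-d(r(t),y)\big)$ tends to $+\infty$ whenever $y\to v_0$. Combining these two facts with the displayed inequality yields $(v_0,w_x^y)_x\to+\infty$, hence $d_x(v_0,w_x^y)\to 0$.

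The only step requiring a little care — and the only real obstacle — is the assertion that convergence $y\to v_0$ in the cone topology of $\overline X$ entails $(v_0,y)_x\to+\infty$; this is standard for proper CAT($-1$) (indeed proper Gromov hyperbolic) spaces and follows from the compatibility of the cone topology with the visual metrics recorded in \cite{Bou} together with the hyperbolic inequality \eqref{hyperbolic}. Everything else is a direct chain of inequalities, so I would keep the write-up short.
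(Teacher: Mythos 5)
Your proof is correct and is essentially the paper's argument: the chain $(v_0,w_x^y)_x\geq\min\{(v_0,y)_x,(y,w_x^y)_x\}-\delta\geq\min\{(v_0,y)_x,d(x,y)-R\}-\delta$ is exactly the derivation of the right-hand inequality of Lemma \ref{encadrement buseman dur}, which the paper simply cites to get $(v_0,w_x^y)_x\geq(v_0,y)_x-R-\delta$ and then concludes, as you do, from $(v_0,y)_x\to+\infty$. The only cosmetic difference is that you re-prove that inequality inline and keep the minimum with $d(x,y)-R$ rather than absorbing it via $(v_0,y)_x\leq d(x,y)$.
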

\begin{proof}
 Let $y_{n}$ be a sequence of points of $X$ such that $y_{n}\rightarrow v_{0}$. 
 Apply the right hand side inequality of Lemma \ref{encadrement buseman dur} to get
$$
 (v_{0},w^{y_{n}}_{x})_{x}\geq (v_{0},y_{n})_{x}-R-\delta.
 $$
 Since $y_{n}\rightarrow v_{0}$, we have $(v_{0},y_{n})_{x}$ goes to infinity, and thus $d_{x}(v_{0},w_{x}^{y})\rightarrow 0$ as $y\rightarrow v_{0}$.
\end{proof}

\begin{prop}\label{dirac}
Assume that there exists a polynomial $Q_{1}$ (at least of degree 1)  such that for all $y\in X$ with $d(x,y)$ large enough $Q_{1}\big(d(x,y)\big)>0$ and $$Q_{1}\big(d(x,y)\big)\exp{\bigg(-\frac{\alpha}{2}d(x,y)\bigg)}\leq P_{0}\textbf{1}_{\partial X}(y).$$ Then  $$\bigg( \frac{P(y,.)^{1/2}}{P_{0}\textbf{1}_{\partial X}(y)} \bigg)_{y\in X} $$ is a Dirac-Weierstrass family.
\end{prop}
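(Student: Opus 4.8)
The plan is to verify directly the three conditions of Definition \ref{diracweier} for
$$
K(y,v):=\frac{P(y,v)^{1/2}}{P_0\textbf{1}_{\partial X}(y)}.
$$
First note that $P(y,v)^{1/2}=\exp\big(\tfrac{\alpha}{2}\beta_v(x,y)\big)$ is continuous in $(y,v)$, and
$$
P_0\textbf{1}_{\partial X}(y)=\int_{\partial X}P(y,v)^{1/2}\,d\mu_x(v)=\int_{\partial X}\exp\big(\tfrac{\alpha}{2}\beta_v(x,y)\big)\,d\mu_x(v)=\varphi_x(y)
$$
is exactly the function of (\ref{HCHcontinue}), and it is strictly positive since $\mu_x$ is a non-zero positive finite measure. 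Hence $K$ is a well-defined non-negative continuous map, which is condition (1), and $\int_{\partial X}K(y,v)\,d\mu_x(v)=\varphi_x(y)^{-1}\varphi_x(y)=1$, which is condition (2). The whole content therefore lies in condition (3).

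For (3), fix $v_0\in\partial X$ and $r_0>0$, and keep the points $w_x^y\in\mathcal O_R(x,y)$ fixed as in the paragraph preceding Lemma \ref{visual}. By Lemma \ref{visual} there is a neighbourhood $V$ of $v_0$ in $\overline X$ with $d_x(v_0,w_x^y)<r_0/2$ for all $y\in V\cap X$; hence for such $y$ and for every $v\in\partial X\setminus B(v_0,r_0)$ the triangle inequality gives $d_x(v,w_x^y)\ge r_0/2$. Plugging this lower bound into the first inequality of Lemma \ref{ineghch} yields, for all such $v$,
$$
\exp\big(\tfrac{\alpha}{2}\beta_v(x,y)\big)\ \le\ \exp\big(\alpha(\delta+R)\big)\,\big(\tfrac{2}{r_0}\big)^{\alpha}\,\exp\big(-\tfrac{\alpha}{2}d(x,y)\big),
$$
so that, writing $C(r_0):=\exp(\alpha(\delta+R))\,(2/r_0)^{\alpha}\,\|\mu_x\|$,
$$
\int_{\partial X\setminus B(v_0,r_0)}P(y,v)^{1/2}\,d\mu_x(v)\ \le\ C(r_0)\,\exp\big(-\tfrac{\alpha}{2}d(x,y)\big)\qquad(y\in V\cap X).
$$

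It then suffices to divide by $P_0\textbf{1}_{\partial X}(y)=\varphi_x(y)$. Since $y\to v_0\in\partial X$ forces $d(x,y)\to+\infty$ (a sequence $y_n\to v_0$ with $d(x,y_n)$ bounded would, by properness of $X$, subconverge inside $X$, contradicting $v_0\in\partial X$), the hypothesis $Q_1\big(d(x,y)\big)\exp(-\tfrac{\alpha}{2}d(x,y))\le P_0\textbf{1}_{\partial X}(y)$ applies for $y$ close enough to $v_0$, giving
$$
\int_{\partial X\setminus B(v_0,r_0)}K(y,v)\,d\mu_x(v)\ \le\ \frac{C(r_0)}{Q_1\big(d(x,y)\big)},
$$
and the right-hand side tends to $0$ as $y\to v_0$ because $Q_1$ has degree at least $1$. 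This establishes (3), and with it the proposition. The only thing requiring care is the bookkeeping: Lemma \ref{ineghch} controls the numerator, the hypothesis (a Harish-Chandra type lower bound) controls the denominator, and one uses the elementary fact that approaching a boundary point sends $d(x,\cdot)$ to infinity. I do not expect any genuine obstacle beyond this.
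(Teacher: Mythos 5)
Your proof is correct and follows essentially the same route as the paper's: both use Lemma \ref{visual} to push $w_x^y$ into $B(v_0,r_0/2)$, the triangle inequality to get $d_x(v,w_x^y)\ge r_0/2$ off $B(v_0,r_0)$, the first inequality of Lemma \ref{ineghch} to bound the numerator by a constant times $\exp(-\tfrac{\alpha}{2}d(x,y))$, and the hypothesis on $Q_1$ to control the denominator, yielding a bound of order $1/Q_1(d(x,y))\to 0$. Your explicit verification of conditions (1)--(2) and of the fact that $y\to v_0$ forces $d(x,y)\to\infty$ only makes explicit what the paper leaves implicit (via its restriction to $V_{R'}=V\cap X\setminus B_X(x,R')$).
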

\begin{proof}
Let $B(v_{0},r_{0})$ the ball of radius $r_{0}$ at $v_{0}$ in $\partial X$ with respect to  $d_{x}$.\\
 Let $\epsilon>0$. Since $Q_{1}$ is a polynomial at least of degree one, there exists $R'>0$ such that for all $y$ satisfying $d(x,y)>R'$ we have $$\frac{ C_{r_{0},\alpha,\delta}\|\mu_{x} \|}{Q_{1}\big(d(x,y)\big)}<\epsilon$$ where  $C_{r_{0},\alpha,\delta}=2^{\alpha}\exp{(\alpha (\delta+R))}/r_{0}^{\alpha}$ is a positive constant.

 Lemma \ref{visual} yields a neighborhood $V$ of $v_{0}$ such that $d_{x}(v_{0},w_{x}^{y})\leq r_{0}/2$ for all $y\in V$. We have  for all $v$ in $\partial X \backslash B(v_{0},r_{0})$:
\begin{align*}
d_{x}(v,w_{x}^{y})&\geq d_{x}(v,v_{0})- d_{x}(v_{0},w_{x}^{y})\\
&\geq r_{0}-d_{x}(v_{0},w_{x}^{y})\\
&\geq \frac{r_{0}}{2}.
\end{align*}
We set $V_{R'}=V\cap X\backslash B_{X}(x,R')$.
Combining Lemma \ref{ineghch} with  the above inequality we obtain for all $y\in V_{R'}$:
\begin{align*}
\int_{\partial X \backslash B(v_{0},r_{0})}\frac{P^{\frac{1}{2}}(y,v)}{P_{0}\textbf{1}_{\partial X}(y)}d\mu_{x}(v) &\leq 
\int_{\partial X \backslash B(v_{0},r_{0})}\frac{\exp{\big(\alpha (\delta+R)\big)}\exp{\big(-\frac{\alpha}{2} d(x,y)\big)}}{d_{x}^{\alpha}(v,w_{x}^{y})\big(P_{0}\textbf{1}_{\partial X}(y) \big)} d\mu_{x}(v)\\
&\leq C_{r_{0},\alpha,\delta}  \int_{\partial X \backslash B(v_{0},r_{0})}\frac{\exp{\big(-\frac{\alpha}{2} d(x,y)\big)}}{Q_{1}\big(d(x,y)\big)\exp{\big(-\frac{\alpha}{2} d(x,y)\big)}}d\mu_{x}(v)\\
&= C_{r_{0},\alpha,\delta}\int_{\partial X \backslash B(v_{0},r_{0})}\frac{1}{Q_{1}\big(d(x,y)\big)}d\mu_{x}(v)\\
&\leq\frac{ C_{r_{0},\alpha,\delta}\mu_{x}(\partial X)}{Q_{1}\big(d(x,y)\big)}\\
&\leq \epsilon.
\end{align*}

It follows that $$\int_{\partial X \backslash B(v_{0},r_{0})}\frac{P^{\frac{1}{2}}(y,v)}{P_{0}\textbf{1}_{\partial X}(y)}d\mu_{x}(v)\rightarrow 0 \mbox{ as } y\rightarrow v_{0}.$$

\end{proof}

Besides, the same method proves the following proposition: 
\begin{prop}\label{poissondirac}
Assume that there exists a constant $C>0$ such that $$\frac{\|\mu_{x} \|}{\| \mu_{y} \|} \leq C,$$ for all $y\in X$. 
The normalized Poisson kernel $$\bigg(\frac{P(y,.)}{\| \mu_{y} \| } \bigg)_{y\in X}$$ is a Dirac-Weierstrass family.
\end{prop}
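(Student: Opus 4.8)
The plan is to verify directly the three defining properties of a Dirac--Weierstrass family for the kernel $K(y,v) = P(y,v)/\|\mu_y\| = \exp(\alpha\beta_v(x,y))/\|\mu_y\|$, following the scheme of the proof of Proposition \ref{dirac}, the point being that here the normalization is handled more easily. First I would record that $K\ge 0$ is immediate, since the Poisson kernel is a positive exponential and $\|\mu_y\|>0$ ($\mu_y$ being a nonzero finite measure, equivalent to $\mu_x$). For the normalization $\int_{\partial X}K(y,\cdot)\,d\mu_x = 1$, I would invoke the defining conformal property $\frac{d\mu_y}{d\mu_x}(v)=\exp(\alpha\beta_v(x,y))=P(y,v)$, so that $\int_{\partial X}P(y,v)\,d\mu_x(v)=\mu_y(\partial X)=\|\mu_y\|$ and the ratio equals $1$; note this is exactly why no polynomial lower bound (such as the one needed for $P_{0}\textbf{1}_{\partial X}$ in Proposition \ref{dirac}) is required here. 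I would also check continuity of $K$ on $X\times\partial X$: the Busemann cocycle $(y,v)\mapsto\beta_v(x,y)$ is continuous, and $y\mapsto\|\mu_y\|=\int_{\partial X}\exp(\alpha\beta_v(x,y))\,d\mu_x(v)$ is continuous by dominated convergence, the integrand being bounded by $\exp(\alpha\,d(x,y))$ locally uniformly in $y$.

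The substantive step --- and the only (mild) obstacle --- is the concentration property (3), and this is where the hypothesis $\|\mu_x\|/\|\mu_y\|\le C$ is used, in the equivalent form $1/\|\mu_y\|\le C/\|\mu_x\|$ valid for all $y\in X$. The plan is to mimic the estimate in Proposition \ref{dirac}: fix $R>0$, pick $w_x^y\in\mathcal{O}_R(x,y)$ for each $y$, and square the first inequality of Lemma \ref{ineghch} to obtain $P(y,v)\le\exp\big(2\alpha(\delta+R)\big)\exp\big(-\alpha\,d(x,y)\big)\,d_x^{-2\alpha}(v,w_x^y)$. Then, given $v_0\in\partial X$ and $r_0>0$, Lemma \ref{visual} furnishes a neighbourhood $V$ of $v_0$ on which $d_x(v_0,w_x^y)\le r_0/2$, whence $d_x(v,w_x^y)\ge r_0/2$ for every $v\notin B(v_0,r_0)$; integrating over $\partial X\setminus B(v_0,r_0)$, using $1/\|\mu_y\|\le C/\|\mu_x\|$ and $\mu_x(\partial X)=\|\mu_x\|$, leaves a bound of the form $C'\exp\big(-\alpha\,d(x,y)\big)$ with $C'$ independent of $y$. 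Since $d(x,y)\to+\infty$ as $y\to v_0\in\partial X$ and $\alpha>0$, this tends to $0$, giving (3).

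I do not expect any serious difficulty: the only inputs are the Radon--Nikodym identity defining conformal densities, Lemma \ref{ineghch}, Lemma \ref{visual}, and the uniform bound $\|\mu_x\|/\|\mu_y\|\le C$, whose sole role is to replace the Harish--Chandra lower estimate used in Proposition \ref{dirac}. The one thing to keep in mind is that $\alpha>0$ is genuinely needed in the concentration step --- without it the normalized kernel would be essentially constant and (3) would fail --- which causes no trouble here since the ambient group is non-elementary.
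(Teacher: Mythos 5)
Your proposal is correct and follows exactly the route the paper intends: the paper gives no separate argument for this proposition, stating only that ``the same method proves'' it, meaning the proof of Proposition \ref{dirac} with the squared estimate from Lemma \ref{ineghch}, Lemma \ref{visual}, and the uniform bound $\|\mu_x\|/\|\mu_y\|\leq C$ replacing the Harish--Chandra lower estimate --- which is precisely what you carry out. Your additional observations (the normalization via the conformality identity $d\mu_y/d\mu_x = P(y,\cdot)$, the continuity of $y\mapsto\|\mu_y\|$, and the role of $\alpha>0$) are accurate and fill in details the paper leaves implicit.
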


\subsection{Equidistribution theorems extended to $(L^{1})^{*}$}

Theorem \ref{roblin} of T. Roblin  has for immediate consequence:
\begin{theorem}\label{conseqro1}(T. Roblin)
Let $\Gamma$ be a discrete subgroup of isometries of $X$ with a non-arithmetic spectrum. Assume that $\Gamma$ admits a finite BMS measure associated to a $\Gamma$-invariant conformal density $\mu$ of dimension $\alpha=\alpha(\Gamma)$. Then for each $x \in X$ and for all $\rho>0$ we have as $n$ goes to infinity: $$\frac{1}{|C_{n}(x,\rho)|}\sum_{ C_{n}(x,\rho)}D_{\gamma^{-1}  x}  \rightharpoonup \frac{\mu_{x}}{\|\mu_{x} \|} $$ with respect to  the weak* topology of $C(\overline X)^*$.
\end{theorem}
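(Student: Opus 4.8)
The plan is to deduce the statement from Roblin's Theorem \ref{roblin} by integrating out one of the two boundary variables and then converting the ball-count into an annulus-count.

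First I would apply Theorem \ref{roblin} with $y=x$ and test the weak* convergence against functions of product form $f\otimes\textbf{1}_{\overline{X}}$, $f\in C(\overline{X})$. Since $\textbf{1}_{\overline{X}}(\gamma x)=1$ and $\int_{\overline{X}}\textbf{1}_{\overline{X}}\,d\mu_x=\|\mu_x\|$, and since the weak* limit $\mu_x\otimes\mu_x$ is a genuine product measure, this gives
$$\alpha e^{-\alpha n}\|m_{\Gamma}\|\sum_{\gamma\in\Gamma_n(x)}D_{\gamma^{-1}x}\rightharpoonup \|\mu_x\|\,\mu_x$$
in $C(\overline{X})^*$, where $\Gamma_n(x)=\{\gamma\in\Gamma\mid d(x,\gamma x)<n\}$. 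Taking $f=\textbf{1}_{\overline{X}}$ recovers in particular the counting asymptotics $\alpha e^{-\alpha n}\|m_{\Gamma}\|\,|\Gamma_n(x)|\to\|\mu_x\|^2$ already recorded in the proof of Proposition \ref{inegsample}.

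Next I would write $C_n(x,\rho)=\Gamma_{n+\rho}(x)\setminus\Gamma_{n-\rho}(x)$ and subtract the two corresponding convergences, after rescaling each by the appropriate factor $e^{\pm\alpha\rho}$ so as to normalize by $e^{-\alpha n}$ rather than $e^{-\alpha(n\pm\rho)}$. This yields
$$\alpha e^{-\alpha n}\|m_{\Gamma}\|\sum_{\gamma\in C_n(x,\rho)}D_{\gamma^{-1}x}\rightharpoonup \big(e^{\alpha\rho}-e^{-\alpha\rho}\big)\|\mu_x\|\,\mu_x=2\sinh(\alpha\rho)\|\mu_x\|\,\mu_x,$$
and likewise $\alpha e^{-\alpha n}\|m_{\Gamma}\|\,|C_n(x,\rho)|\to 2\sinh(\alpha\rho)\|\mu_x\|^2$, which is exactly (\ref{estimcouronne}). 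Since $2\sinh(\alpha\rho)\|\mu_x\|\neq 0$, dividing the first convergence by the second gives, for every $f\in C(\overline{X})$,
$$\frac{1}{|C_n(x,\rho)|}\sum_{\gamma\in C_n(x,\rho)}D_{\gamma^{-1}x}(f)\longrightarrow\frac{\mu_x(f)}{\|\mu_x\|},$$
which is the claim. Alternatively, one may simply test the already-established convergence (\ref{conseq1roblin}) against $f\otimes\textbf{1}_{\overline{X}}$, which reaches the conclusion directly without re-doing the annulus bookkeeping.

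There is no substantial obstacle here; the only points needing a little care are that Roblin's theorem is applied with a real truncation parameter $n\pm\rho$ (legitimate, and used the same way elsewhere in the paper), that the limit $\mu_x\otimes\mu_x$ is a product so that pairing with $f\otimes\textbf{1}_{\overline{X}}$ factors as $\|\mu_x\|\,\mu_x(f)$, and that the normalizing constant $2\sinh(\alpha\rho)\|\mu_x\|$ is nonzero so that the quotient of the limits computes the limit of the quotients. This is why the theorem is an immediate consequence.
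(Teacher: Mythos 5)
Your argument is correct and is exactly the route the paper intends: the paper states this as an ``immediate consequence'' of Theorem \ref{roblin}, and the annulus bookkeeping you carry out (rescaling by $e^{\pm\alpha\rho}$, subtracting the two ball counts, and dividing by the asymptotics (\ref{estimcouronne})) is precisely the computation already performed in the proof of Corollary \ref{cororoblin} to obtain (\ref{conseq1roblin}); testing that convergence against $f\otimes\textbf{1}_{\overline{X}}$, as you note in your alternative, gives the statement directly. No gaps.
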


We view Theorem D as new new equidistribution theorem, where the weak* convergence is not on the dual of the space of continuous functions but rather on the dual of space of $L^{1}$ functions on the boundary.

\begin{proof}[Proof of Theorem D]
Let $x$ in $X$ and $\rho>0$, and consider $N_{x,\rho}$ such that $n\geq N_{x,\rho}$ implies $|C_{n}(x,\rho)|>0$.
We give a proof for the densities $(\mu_{x})_{x \in X}$. For all $n\geq N_{x,\rho}$, we denote by $\lambda_{x,\rho}^{n}$ the following measure $$\lambda_{x,\rho}^{n}=\frac{1}{|C_{n} (x,\rho)|}\sum_{\gamma \in C_{n}(x,\rho)}\frac{\mu_{\gamma  x}}{\| \mu_{\gamma x} \|} .$$\\

\textbf{Step 1: the sequence of measures $(\lambda_{x,\rho}^{n})_{n\geq N_{x,\rho}}$ is uniformly bounded.}

Since the dual space of $L^{1}(\partial X,\mu_{x})$ is $L^{\infty}(\partial X,\mu)$ we have for $n\geq N_{x\rho}$:
\begin{align*}
 \|H^{n}_{x,\rho} \|_{\infty}&=\sup_{\|f\|_{1}\leq 1} \bigg \lbrace \bigg | \int_{\partial X}H^{n}_{x,\rho}(v)f(v)d\mu_{x}(v) \bigg | \bigg \rbrace \\
 &= \sup_{\|f\|_{1}\leq 1} \bigg \lbrace \bigg | \frac{1}{|C_{n} (x,\rho)|}\sum_{\gamma \in C_{n}(x,\rho)}\frac{\mu_{\gamma  x}}{\| \mu_{x} \|}(f) \bigg | \bigg \rbrace \\
 &= \| \lambda_{x,\rho}^{n}\|_{(L^{1})^{*}}. 
  \end{align*} 
 
Hence Proposition \ref{inegsample2} completes \textbf{Step 1}.

\textbf{Step 2:  computation of the limit of $(\lambda_{n}^{x})_{n\geq N_{x,\rho}}$.}  \\
By Banach-Alaoglu's theorem, $(\lambda_{x,\rho}^{n})_{n \geq N_{x,\rho}}$ has accumulation points. Denote by $\lambda_{x}^{\infty}$ such accumulation point. Let $f\in C(\partial X)$, Proposition \ref{poissondirac} combined with Proposition \ref{continuity} define $\overline{\mathcal{P}}f$ as a continuous function on $\overline{X}$ (as in (\ref{extension}) in Subsection \ref{continuity}), where $\mathcal{P}$ is associated with the normalized Poisson kernel defined as in Proposition \ref{poissondirac}. We have for all $n\geq N_{x,\rho}$:
\begin{align*}
\lambda_{x,\rho}^{n}(f)&=\frac{1}{|C_{n} (x,\rho)|}\sum_{\gamma \in C_{n}(x,\rho)} \frac{\mu_{\gamma  x}(f)}{\| \mu_{\gamma x} \|} \\
&=\frac{1}{|C_{n} (x,\rho)|}\sum_{\gamma \in C_{n}(x,\rho)}\mathcal{P}(f)(\gamma x)\\
&= \frac{1}{|C_{n} (x,\rho)|}\sum_{\gamma \in C_{n}(x,\rho)}D_{\gamma  x}\big(\overline{\mathcal{P}}(f)\big).
\end{align*} 
Applying Roblin's theorem \ref{conseqro1} by taking the limit in the above inequality, we obtain for all $f\in C(\partial X)$

$$\lambda_{x}^{\infty}(f)=\mu_{x}\big(\overline{\mathcal{P}}(f)\big)=\frac{\mu_{x}(f)}{\|\mu_{x} \|}.$$
Since $C(\partial X)$ is dense $L^{1}(\partial X,\mu_{x})$ with respect to  the $L^{1}$ norm, we deduce that $(\lambda_{x,\rho}^{n})_{n\geq N_{x,\rho}}$ has only one accumulation point which is $\mu_{x}$, and the proof is done.

The proof concerning $(\nu_{x})_{x\in X}$ follows the same method, and uses $\varphi_{x}=P_{0}$ in order to have available Proposition \ref{dirac} for $\Gamma$ in $\mathcal{C}$. Indeed, since the lower bound of the Harish-Chandra estimates holds a priori only on $CH(\Lambda_{\Gamma}) \backslash B_{X}(x,R_{x})$ we rather use Proposition \ref{dirac} with $\overline{CH(\Lambda_{\Gamma})}=CH(\Lambda_{\Gamma})\cup \Lambda_{\Gamma}$ instead of $\overline{X}=X \cup \partial X$.  If $f$ is a continuous function on $\Lambda_{\Gamma}$, the function $\overline{\mathcal{P}_{0}}f$ on $\overline{CH(\Lambda_{\Gamma})}$ defined as 
\begin{equation}\label{extension}
\overline{\mathcal{P}_{0} }f: y \in  \overline{CH(\Lambda_{\Gamma})} \mapsto  \overline{\mathcal{P}_{0}}f(y)= \left\{
    \begin{array}{ll}
       \mathcal{P}_{0}f(y) & \mbox{if } y \in CH(\Lambda_{\Gamma}) \\
        f(y) & \mbox{if }  y \in  \Lambda_{\Gamma}
    \end{array}
		\right. 
		\end{equation} 
is continuous on $\overline{CH(\Lambda_{\Gamma})}$.
\end{proof}

\begin{remark}
We may ask if an analogous theorem of Theorem \ref{roblin} dealing with $\mu_{x}$ instead of the Dirac mass holds  (assuming $\| \mu_{x} \|=1$ for simplicity)? More precisely do we have: $$\frac{1}{|C_{n} (x,\rho)|}\sum_{\gamma \in C_{n}(x,\rho)}\mu_{\gamma ^{-1} x} \otimes \mu_{\gamma  x} \rightharpoonup \mu_{x} \otimes \mu_{x}$$ with respect to  the weak* convergence of $L^{1}(\partial X\times \partial X,\mu_x \otimes \mu_{x})^{*}$ (for some $\rho$)? The answer is negative because a duality argument combined with Banach-Steinhaus theorem would imply that the sequence of functions $$G_{n}:(v,w)\mapsto \frac{1}{|C_{n} (x,\rho)|}\sum_{\gamma \in C_{n}(x,\rho)} \exp(\alpha \beta_{v}(x,\gamma^{-1} x))\exp(\alpha \beta_{w}(x,\gamma x))$$ is uniformly  bounded with respect to  the $L^{\infty}(\mu)$ norm. It is easy to see that this is impossible by evaluating $G_{n}$ at $(v,w)\in \mathcal{O}_{R}(x,\gamma^{-1} x) \times \mathcal{O}_{R}(x,\gamma x)$ for some $\gamma \in C_{n}(x,\rho)$. We obtain the same answer to the same question dealing with $\nu_{x}$ by considering the sequence of functions $$(v,w)\mapsto \frac{1}{|C_{n} (x,\rho)|}\sum_{\gamma \in C_{n}(x,\rho)} \frac{\exp(\frac{\alpha}{2} \beta_{v}(x,\gamma^{-1}  x))\exp(\frac{\alpha}{2} \beta_{w}(x,\gamma x))}{\phi_{x}^{2}(\gamma)} \cdot $$ 
\end{remark}

\end{document}